\numberwithin{equation}{section}
\newtheorem{theorem}{Theorem}[section]
\newtheorem{conjecture}[theorem]{Conjecture}
\newtheorem{lemma}[theorem]{Lemma}
\newtheorem{definition}[theorem]{Definition}
\newtheorem{proposition}[theorem]{Proposition}
\newtheorem{remark}[theorem]{Remark}
\newtheorem{corollary}[theorem]{Corollary}
\newcommand{\E}{{\mathbb E}}
\newcommand{\M}{{\mathbb M}}
\newcommand{\Z}{{\mathbb Z}}
\newcommand{\dc}{\mathbf{d}}
\newcommand{\La}{\mathbf{\mathcal{L}}}
\newcommand{\tr}{\mathrm{tr}}
\newcommand{\MP}{\mathbb{M}_{2^{n}}}
\newcommand{\s}{\mathbf{s}}
\newcommand{\Ex}{\mathcal{E}}
\newcommand{\wyd}{W_{\thickapprox d}}
\newcommand{\zfourn}{\{0,1,2,3\}^n}
\newcommand{\supp}{{\mathrm{supp}}}
\newcommand{\var}{{\mathrm{var}}}
\title{Quantum KKL-type inequalities revisited}
\author[Jiao]{Yong Jiao}
\address{School of Mathematics and Statistics, Central South University, HNP-LAMA, Changsha 410075, China}
\email{jiaoyong@csu.edu.cn}
\author[Lin]{Wenlong Lin}
\address{School of Mathematics and Statistics, Central South University, HNP-LAMA, Changsha 410075, China}
\email{linwenlong2000@foxmail.com}
\author[Luo]{Sijie Luo}
\address{School of Mathematics and Statistics, Central South University, HNP-LAMA, Changsha 410075, China}
\email{sijieluo@csu.edu.cn}
\author[Zhou]{Dejian Zhou}
\address{School of Mathematics and Statistics, HNP-LAMA, Central South University, Changsha 410075, China}
\email{zhoudejian@csu.edu.cn}
\subjclass[2020]{Primary 46L53; Secondary 94D10, 47D07}
\keywords{Quantum KKL inequality,  Quantum Eldan-Gross inequality, Random restriction, Heat semigroups.}
\begin{document}
\maketitle

\begin{abstract}
In the present paper, we develop the random restriction method in the quantum framework. By applying this method, we establish the quantum Eldan-Gross inequality, the quantum Talagrand isoperimetric inequality, and related quantum KKL-type inequalities. Our results recover some recent results of Rouz\'e et al. \cite{RWZ2024} and Jiao et al. \cite{JLZ2025}, which can be viewed as alternative answers to the quantum KKL conjecture  proposed by Motanaro and Osborne in \cite{MO2010}.
\end{abstract}

\section{Introduction}
Motivated by problems from complexity theory, geometric functional analysis, and computer science, numerous remarkable results have been developed in the hypercube framework, making Boolean analysis one of the most active areas in discrete Fourier analysis, combinatorial optimization, and related fields in the past decades. To state results, we begin with recalling basic concepts and notions in hypercube setting. For fixed $n\in\mathbb{N}$, let $\{-1,1\}^n$ be the hypercube equipped with the uniform probability measure $\mu_n$, and let $L_{p}(\{-1,1\}^{n})$ be the associated $L_{p}$ space for $1\leq p\leq \infty$. In the sequel, we will use the shorthand notation $[n]\coloneqq\{1,2,\cdots,n\}$. For each $j\in[n]$, the $j$-th influence of $f:\{-1,1\}^n\to\mathbb{R}$ is given by
$$\mathrm{Inf}_j(f)=\mu_n(\{x\in \{-1,1\}^n|\, f(x)\neq f(x^{\oplus j})\}),$$
where $x^{\oplus j}$ means flipping the $j$-th variable of $x$, i.e. for $x=(x_1,\cdots, x_n)\in \{-1,1\}^n$, 
$$x^{\oplus j}=(x_1,\cdots,x_{j-1},-x_j,x_{j+1},\cdots,x_n).$$
The total influence of $f$ is defined by $\mathrm{Inf}(f)=\sum_{j\in[n]}\mathrm{Inf}_j(f)$,  which is often used to measure the complexity of the function $f$. To illustrate the analytic property of the $j$-th influence of $f$, we now recall the $j$-th partial derivative of the $f:\{-1,1\}^{n}\to \mathbb{R}$ as follows:
\begin{equation}
\dc_{j}(f)(x)\coloneqq\frac{f(x)-f(x^{\oplus j})}{2},\quad x\in\{-1,1\}^{n}.
\end{equation}
In particular, for each Boolean function $f:\{-1,1\}^n\to\{-1,1\}$, it is easy to compute that
\begin{equation}\label{Inf-Dj}
\mathrm{Inf}_j(f)=\|\dc_{j}(f)\|_{L_p(\{-1,1\}^n)}^p,\quad \forall 1\leq p< \infty, j\in[n].
\end{equation}
Bounding the (total) influence of $f$ in terms of the variance of $f$ is one of the essential themes of Boolean analysis, which is closely related to specific types of functional inequalities on the hypercube. In particular, the Poincar\'e inequality on the hypercube can be stated as follows (see e.g. \cite[p.\,36]{ODo2014}):
\begin{equation}\label{Poincare inequality}
\var(f):=\|f-\mathbb{E}_{\mu_{n}}(f)\|_{L_2(\{-1,1\}^n)}^2\leq \sum_{j=1}^{n}\|\dc_{j}(f)\|^{2}_{L_{2}(\{-1,1\}^{n})},
\end{equation}
where $\mathbb{E}_{\mu_{n}}(f)$ is referred as the expectation of $f$ with respect to the uniform measure $\mu_{n}$. Hence, combining \eqref{Inf-Dj} with \eqref{Poincare inequality} leads to the following lower bound
\[
\max_{j\in[n]}\mathrm{Inf}_j(f)\geq \frac{1}{n},
\]
for each balanced Boolean function $f$, that is, a Boolean function with $\var(f)=1$.

However, in many aspects, the Poincar\'e inequality far from be sharp. In the remarkable paper \cite{KKL1988}, Kahn,  Kalai and Linial strengthened the Poincar\'e inequality in a fundamental way, which leads to the following inequality: there exists a universal constant $C>0$ such that 
\begin{equation}\label{KKL}
\max_{j\in[n]}\mathrm{Inf}_j(f)\geq C\frac{\log(n)}{n},
\end{equation}
for every balanced Boolean function $f$. More precisely,  Kahn, Kalai and Linial \cite{KKL1988} established the following functional inequality  elegantly.
\begin{theorem}[Kahn-Kalai-Linial]\label{classical KKL}
There exists a universal constant $C>0$ such that the following holds
\begin{equation}\label{functional KKL}
\var(f)\leq C\frac{\sum_{j=1}^{n}\|\dc_{j}(f)\|^{2}_{L_{2}(\{-1,1\}^{n})}}{\log\left(1/\max_{j\in [n]}\|\dc_{j}(f)\|^{2}_{L_{2}(\{-1,1\}^{n})}\right)},
\end{equation}
for every function $f:\{-1,1\}^{n}\to \mathbb{R}$.
\end{theorem}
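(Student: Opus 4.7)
The plan is to follow the classical Fourier-analytic argument of Kahn, Kalai and Linial, built around the Bonami--Beckner hypercontractive inequality combined with a level-$k$ truncation. I would first expand $f$ in the Walsh basis, $f=\sum_{S\subseteq[n]}\widehat{f}(S)\chi_{S}$ with $\chi_{S}(x)=\prod_{i\in S}x_{i}$, so that Parseval yields $\var(f)=\sum_{S\neq\emptyset}\widehat{f}(S)^{2}$, and the straightforward computation $\dc_{j}(f)=\sum_{S\ni j}\widehat{f}(S)\chi_{S}$ produces $I_{j}:=\|\dc_{j}(f)\|_{L_{2}(\{-1,1\}^{n})}^{2}=\sum_{S\ni j}\widehat{f}(S)^{2}$ together with $\sum_{j=1}^{n}I_{j}=\sum_{S}|S|\,\widehat{f}(S)^{2}$. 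Setting $M:=\max_{j}I_{j}$, the target inequality becomes $\var(f)\le C\sum_{j}I_{j}/\log(1/M)$.

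Next, I would introduce an integer $k$ (to be chosen later) and split $\var(f)=\sum_{1\le|S|\le k}\widehat{f}(S)^{2}+\sum_{|S|>k}\widehat{f}(S)^{2}$. The high-degree tail is handled by the Poincar\'e-type bound $\sum_{|S|>k}\widehat{f}(S)^{2}\le\tfrac{1}{k+1}\sum_{S}|S|\widehat{f}(S)^{2}=\tfrac{1}{k+1}\sum_{j}I_{j}$. The heart of the matter is the low-degree piece, for which I would apply Bonami--Beckner in the form $\|T_{\rho}g\|_{2}\le\|g\|_{1+\rho^{2}}$ with $\rho\in(0,1)$ to $g=\dc_{j}(f)$. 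Expanding the left-hand side in Fourier and keeping only $|S|\le k$ gives
$$\rho^{2k}\sum_{S\ni j,\,|S|\le k}\widehat{f}(S)^{2}\;\le\;\|T_{\rho}\dc_{j}(f)\|_{2}^{2}\;\le\;\|\dc_{j}(f)\|_{1+\rho^{2}}^{2}\;\le\;I_{j}^{2/(1+\rho^{2})},$$
where the last bound uses the discrete range $\dc_{j}(f)\in\{-1,0,1\}$ for Boolean $f$ (so that $\|\dc_{j}(f)\|_{p}=I_{j}^{1/p}$ for every $p\ge 1$), or an equivalent log-convexity estimate in the real-valued case after a normalization such as $\|f\|_{\infty}\le 1$. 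Summing over $j$ converts the left side into $\sum_{1\le|S|\le k}|S|\widehat{f}(S)^{2}\ge\sum_{1\le|S|\le k}\widehat{f}(S)^{2}$, while the right-hand side is controlled by $\sum_{j}I_{j}^{2/(1+\rho^{2})}\le M^{(1-\rho^{2})/(1+\rho^{2})}\sum_{j}I_{j}$.

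Combining both pieces I would arrive at
$$\var(f)\;\le\;\Bigl[\rho^{-2k}\,M^{(1-\rho^{2})/(1+\rho^{2})}+\tfrac{1}{k+1}\Bigr]\sum_{j=1}^{n}I_{j},$$
and the proof concludes by fixing $\rho\in(0,1)$ (for instance $\rho^{2}=1/2$) and choosing $k$ of order $\log(1/M)$ with a small enough constant that $\rho^{-2k}M^{(1-\rho^{2})/(1+\rho^{2})}\le 1/(k+1)$; both terms then contribute $O(\sum_{j}I_{j}/\log(1/M))$, which is the stated inequality. The main obstacle is the sharp quantitative balancing of $\rho$ and $k$ that causes $\log(1/M)$ to appear in the denominator rather than the weaker $1/n$ Poincar\'e factor; this is precisely where the hypercontractive exponent $2/(1+\rho^{2})>1$ is essential. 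A secondary technicality is extending the concentration bound $\|\dc_{j}(f)\|_{1+\rho^{2}}^{2}\le I_{j}^{2/(1+\rho^{2})}$ from $\{-1,1\}$-valued $f$ to the real-valued setting, which can be done either through a Talagrand-type $L_{1}/L_{2}$ refinement or through suitable truncation arguments under a boundedness normalization on $f$.
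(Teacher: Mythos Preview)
The paper does not prove this statement; Theorem~\ref{classical KKL} is quoted in the introduction as the classical Kahn--Kalai--Linial theorem from \cite{KKL1988}, so there is no in-paper argument to compare against. For Boolean $f$ your outline is the standard KKL proof and is correct: hypercontractivity applied to $\dc_j(f)$ combined with a level-$k$ split, the key input being $\|\dc_j(f)\|_p^p=I_j$ from the fact that $\dc_j(f)\in\{-1,0,1\}$.

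There is, however, a genuine gap in your closing paragraph. The inequality \eqref{functional KKL} as stated for \emph{every} $f:\{-1,1\}^n\to\mathbb{R}$ is false, so no ``Talagrand-type $L_1/L_2$ refinement'' or ``truncation under a boundedness normalization'' can extend your argument to that generality. Take $f=n^{-1/2}\sum_{j=1}^n x_j$: then $\var(f)=1$, each $I_j=1/n$, $\sum_j I_j=1$, and the right-hand side is $C/\log n\to 0$. Even under $\|f\|_\infty\le 1$ the rescaling $f=n^{-1}\sum_j x_j$ gives $\var(f)=1/n$ against a right-hand side of order $1/(n\log n)$; this is the same linear example the paper uses in Remark~\ref{rem p2}. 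The wording ``for every function $f:\{-1,1\}^n\to\mathbb{R}$'' in the paper appears to be an overstatement of the original KKL result, which is for Boolean $f$. Your main argument already proves the Boolean case; you should drop the proposed extension rather than try to patch it.
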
 
Due to the fundamental role in Boolean analysis, \eqref{KKL} (or, \eqref{functional KKL}) is now known as the KKL inequality, and we refer to \cite{ODo2014} to interesting applications of the inequality. One of significant improvements of the KKL inequality is the Talagrand ($L_{1}$-$L_{2}$-) influence inequality established in  \cite{Ta1994}. 
\begin{theorem}[Talagrand]
There exists a universal constant $C>0$ such that for each function $f:\{-1,1\}^{n}\to\mathbb{R}$ the following holds
\begin{equation}\label{Ta-L1L2}
\var(f)\leq C\sum_{j=1}^{n}\frac{\|\dc_{j}(f)\|^{2}_{L_{2}(\{-1,1\}^{n})}}{1+\log(\|\dc_{j}(f)\|_{L_{2}(\{-1,1\}^{n})}/\|\dc_{j}(f)\|_{L_{1}(\{-1,1\}^{n})})}.
\end{equation}
\end{theorem}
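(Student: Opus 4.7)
The plan is to follow Talagrand's original Fourier-analytic strategy, combining Parseval's identity with the Bonami--Beckner hypercontractive inequality on the hypercube. First, I would expand $f$ in the Walsh basis, $f=\sum_{S\subseteq[n]}\hat f(S)\chi_S$, so that $\var(f)=\sum_{S\neq\emptyset}\hat f(S)^2$. A double-counting observation (each nonempty $S$ is charged $1/|S|$ by every coordinate it contains) rewrites this as
$$\var(f)=\sum_{j=1}^n\sum_{S\ni j}\frac{\hat f(S)^2}{|S|},$$
and, writing $g_j:=\dc_j(f)$, one has $\|g_j\|_{L_2(\{-1,1\}^n)}^2=\sum_{S\ni j}\hat f(S)^2$. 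Thus the theorem reduces to bounding each inner sum by $C\|g_j\|_2^2/\bigl(1+\log(\|g_j\|_2/\|g_j\|_1)\bigr)$.

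The main step is a level-cutoff argument at an integer threshold $d_j$ to be chosen later. Splitting the sum according to whether $|S|\le d_j$ or $|S|>d_j$ yields
$$\sum_{S\ni j}\frac{\hat f(S)^2}{|S|}\;\le\;\|P_{\le d_j}g_j\|_2^2+\frac{1}{d_j+1}\|g_j\|_2^2,$$
where $P_{\le d_j}$ projects onto Fourier levels $\le d_j$; the high-degree tail is handled trivially by replacing $1/|S|$ with $1/(d_j+1)$. For the low-degree piece I would invoke hypercontractivity in the form $\|T_\rho h\|_2\le\|h\|_{1+\rho^2}$ together with the elementary smoothing inequality $\|P_{\le d_j}g_j\|_2\le\rho^{-d_j}\|T_\rho g_j\|_2$ to conclude
$$\|P_{\le d_j}g_j\|_2\;\le\;\rho^{-d_j}\|g_j\|_{1+\rho^2}.$$
Log-convexity of $L_p$-norms then interpolates $\|g_j\|_{1+\rho^2}\le\|g_j\|_2^{2\rho^2/(1+\rho^2)}\|g_j\|_1^{(1-\rho^2)/(1+\rho^2)}$.

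Finally, I would tune the parameters. Choosing $\rho^2$ of order $1/\log\bigl(\|g_j\|_2/\|g_j\|_1\bigr)$ and $d_j$ of order $\log\bigl(\|g_j\|_2/\|g_j\|_1\bigr)$ balances the two contributions, so that after simplification
$$\sum_{S\ni j}\frac{\hat f(S)^2}{|S|}\;\le\;C\,\frac{\|g_j\|_2^2}{1+\log(\|g_j\|_2/\|g_j\|_1)}.$$
Summing over $j$ gives \eqref{Ta-L1L2}. I anticipate that the trickiest part will be carrying out this joint optimization cleanly, especially the boundary regimes where $\|g_j\|_2\approx\|g_j\|_1$ (the denominator collapses to a constant and one must fall back on the Poincar\'e bound \eqref{Poincare inequality}) or where $\|g_j\|_1=0$ (letting $d_j\to\infty$); both can be absorbed into the universal constant $C$.
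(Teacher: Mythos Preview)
The paper does not actually prove this theorem: it is quoted in the introduction as Talagrand's classical 1994 result \cite{Ta1994} and serves only as background motivation for the quantum analogues that form the paper's contribution. Your outline is essentially the standard Fourier--hypercontractivity argument (Talagrand's original route), so there is no in-paper proof to compare against, and the strategy is sound.

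One concrete caveat on the parameter tuning, since you flagged the optimization as the delicate point: taking $\rho^{2}$ of order $1/L$ \emph{together with} $d_j$ of order $L$, where $L=\log(\|g_j\|_2/\|g_j\|_1)$, makes $\rho^{-2d_j}\approx L^{L}$, which blows up and kills the low-degree bound. The clean choice is to keep $\rho$ a \emph{fixed} constant (say $\rho=1/2$) and vary only $d_j\sim c\,L$; then $\rho^{-2d_j}=e^{c'L}$ while the interpolation factor $r^{-2(1-\rho^2)/(1+\rho^2)}=e^{-c''L}$ with $c''>c'$ for $c$ small enough, so the low-degree piece is $o(1/L)\cdot\|g_j\|_2^2$ and the high-degree piece is $O(1/L)\cdot\|g_j\|_2^2$. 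With that adjustment (and your stated handling of the degenerate cases via \eqref{Poincare inequality}) the argument goes through.
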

\noindent By \eqref{Inf-Dj}, it is clear that the Talagrand influence inequality \eqref{Ta-L1L2} implies the KKL inequality \eqref{KKL}. Since then, the KKL inequality, the Talagrand influence inequality, and their extensions become one of the fundamental tools in Boolean analysis, geometric functional analysis, computer science, and related fields. We refer interested readers to \cite{BKKKL1992,CL2012,EG2022,KKKMS2021,OW2013} for further information and the extensive bibliographies therein.

More recently, motivated by a conjecture of Talagrand \cite{Ta1997}, Eldan and Gorss \cite{EG2020} (see also \cite{EG2022}) applied stochastic analysis techniques to prove the following result known as the Eldan-Gross inequality.
\begin{theorem}[Eldan-Gross]\label{Eldan-Gross inequality}
There exists a universal constant $C>0$ such that the following inequality holds
\begin{equation}\label{EG-2022}
\var(f)\sqrt{\log\left(1+\frac{e}{\sum_{j=1}^{n}\|\dc_{j}(f)\|^{2}_{L_{1}(\{-1,1\}^{n})}}\right)}\leq C\left\|\left(\sum_{j=1}^{n}|\dc_{j}(f)|^{2}\right)^{1/2}\right\|_{L_{1}(\{-1,1\}^{n})},
\end{equation}
for every Boolean function $f:\{-1,1\}^{n}\to\{-1,1\}$.
\end{theorem}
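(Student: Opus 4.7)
The plan is to derive the Eldan--Gross inequality from Talagrand's $L_1$--$L_2$ influence inequality \eqref{Ta-L1L2} by averaging over random restrictions, which is the central technique developed in the paper. Fix a parameter $\delta\in(0,1]$ to be optimized at the end, and let $J\subseteq[n]$ be a random subset in which each index is included independently with probability $\delta$, together with a uniformly random assignment $z\in\{-1,1\}^{J^c}$. Since the restriction $f_{J,z}:\{-1,1\}^J\to\{-1,1\}$ is still Boolean, applying \eqref{Ta-L1L2} and using $\|\dc_j(f_{J,z})\|_{L_1}=\|\dc_j(f_{J,z})\|_{L_2}^2=\mathrm{Inf}_j(f_{J,z})$ gives
\[
\var(f_{J,z})\le C\sum_{j\in J}\frac{\mathrm{Inf}_j(f_{J,z})}{1+\tfrac12\log\bigl(1/\mathrm{Inf}_j(f_{J,z})\bigr)}.
\]

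Next, I take expectations over $(J,z)$. For the left-hand side, Parseval gives $\E_{J,z}\var(f_{J,z})=\sum_{\emptyset\ne S}(1-(1-\delta)^{|S|})|\widehat f(S)|^2$, and, after absorbing the low-degree contribution by applying the Bonami--Beckner semigroup at an appropriate time, this produces a lower bound proportional to $\delta\,\var(f)$. For the right-hand side, the crucial point is to translate the influence sum $\sum_j \mathrm{Inf}_j(f_{J,z})/\log(1/\mathrm{Inf}_j(f_{J,z}))$ into the pointwise ``gradient length'' $\bigl(\sum_j|\dc_j(f)(x)|^2\bigr)^{1/2}$ appearing on the right side of \eqref{EG-2022}. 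Writing $\mathrm{Inf}_j(f_{J,z})$ as an expectation and applying Cauchy--Schwarz combined with the concentration of $\mathrm{Inf}_j(f_{J,z})$ around $\delta\,\mathrm{Inf}_j(f)$, one bounds the average of the Talagrand right-hand side by
\[
C\sqrt{\log(1/\delta)}\;\Bigl\|\bigl(\textstyle\sum_j|\dc_j(f)|^2\bigr)^{1/2}\Bigr\|_{L_1(\{-1,1\}^n)}.
\]

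Combining the two estimates and optimizing in $\delta$, the balance $\delta\asymp 1/\log(1+e/s)$ with $s=\sum_j\|\dc_j(f)\|_{L_1}^2$ yields precisely \eqref{EG-2022}. Observe that this choice is nontrivial only when $s$ is small, which is exactly the regime where Eldan--Gross improves on Poincar\'e.

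The main obstacle is the transition from the $L_1$--$L_2$ inequality applied fiberwise to the global bound involving the $L_1$ norm of the Euclidean gradient length. The function $t\mapsto t/(1+\log(1/t))$ is not uniformly convex, so a naive Jensen's inequality is too lossy; instead one must exploit a hypercontractive concentration of $\mathrm{Inf}_j(f_{J,z})$ around its mean together with a level-set decomposition of the sensitivity $\sum_j|\dc_j(f)(x)|$ at a threshold chosen as a function of $\delta$, to produce the sharp factor $\sqrt{\log(1/\delta)}$ rather than $\log(1/\delta)$. This is exactly the technical step that the authors' random restriction machinery, developed here in the noncommutative setting, is designed to execute.
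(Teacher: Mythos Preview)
First, note that Theorem~\ref{Eldan-Gross inequality} is merely cited in the paper as a known classical result; the paper's own contribution is the quantum analogue (Theorem~\ref{quantum E-G}), and the route taken there is \emph{not} the one you outline. The paper argues by a dichotomy on the size of $M(T)=\sum_j\|\dc_j(T)\|_{L_1}^2$: if $M$ is large relative to $\var(T)$, the Talagrand \emph{isoperimetric} inequality (Theorem~\ref{quantum Ta}, proved via the Buser-type/reverse Poincar\'e estimate of Section~\ref{quantum Eldan-Gross}) already gives the bound; if $M$ is small, a Keller--Kindler spectral estimate (Theorem~\ref{main spectral}) forces almost all of the Fourier mass above level $d\asymp\log(1/M)$, and then Lemma~\ref{high degree} (again a Buser consequence) converts $W_{\ge d}$ into $\sqrt{d}\,\|\,|\nabla T|\,\|_{L_1}$. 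Random restrictions enter only inside the proof of the Keller--Kindler estimate (Proposition~\ref{key lemma 1}), not as a device for averaging Talagrand's $L_1$--$L_2$ inequality.

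More importantly, your scheme has a genuine gap at the step you yourself flag. The claimed bound
\[
\E_{J,z}\Bigl[\sum_{j\in J}\frac{\mathrm{Inf}_j(f_{J,z})}{1+\tfrac12\log(1/\mathrm{Inf}_j(f_{J,z}))}\Bigr]\;\le\;C\sqrt{\log(1/\delta)}\;\Bigl\|\Bigl(\sum_j|\dc_j f|^2\Bigr)^{1/2}\Bigr\|_{L_1}
\]
is false in general. Take $f=\mathrm{Maj}_n$: at $\delta=1$ the left side is the ordinary Talagrand right-hand side, of order $\sqrt{n}/\log n$, while $\|\,|\nabla f|\,\|_{L_1}=\E\sqrt{s_f}$ is $O(1)$ (only an $O(1/\sqrt{n})$ fraction of points are boundary points, each with sensitivity $\Theta(n)$), so the right side vanishes. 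For any fixed $\delta\in(0,1)$ the same comparison shows the left side is $\Theta(\sqrt{\delta n}/\log n)$ while the right side is $O(\sqrt{\log(1/\delta)})$, so the inequality fails for all large $n$. Consequently your optimisation cannot close either: from $\delta\,\var(f)\le C\sqrt{\log(1/\delta)}\,\|\,|\nabla f|\,\|_{L_1}$ one would want to minimise $\sqrt{\log(1/\delta)}/\delta$, but this infimum is $0$ (attained as $\delta\to1$), so no choice of $\delta$ recovers the factor $\sqrt{\log(1+e/s)}$.

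The underlying issue is structural: Talagrand's $L_1$--$L_2$ inequality controls $\var(f)$ by a \emph{linear} functional of the sensitivities $|\dc_j f|$, whereas Eldan--Gross requires the \emph{pointwise square root} $\sqrt{s_f(x)}$ inside the expectation. No amount of averaging over restrictions, Cauchy--Schwarz, or concentration of $\mathrm{Inf}_j(f_{J,z})$ will manufacture that square root; one needs an ingredient---a Buser/reverse Poincar\'e inequality, a martingale representation, or a pathwise stochastic argument---that is genuinely nonlinear in the gradient. That is precisely what the paper supplies via Theorem~\ref{quantum Buser inequality} and Lemma~\ref{high degree}.
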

Notably, the Eldan-Gross inequality (i.e., Theorem \ref{Eldan-Gross inequality}) unifies the KKL inequality (i.e., Theorem \ref{classical KKL}) and Talagrand's isoperimetric inequality \cite[Theorem 1.1]{Ta1993}, making Theorem \ref{Eldan-Gross inequality} into one of the most efficient tools in Boolean analysis. For some new proofs of \eqref{EG-2022}, we refer the interested reader to \cite{BIM2023,EKLM2022,IZ2024,Ros2020}.

It is worthwhile to mention that the original proofs of the KKL inequality and the Talagrand influence inequality rely on the hypercontractivity principle and the heat semigroup theory on hypercubes, while the proof of the Eldan-Gross inequality utilizes methods from stochastic analysis which is of different nature of the semigroup approach. Recently, the random restriction method has been viewed as a valuable tool for proving functional inequalities in the hypercube setting. Specifically, Kelman et al. \cite{KKKMS2021} applied this method to give a unified proof of the KKL inequality \eqref{KKL} and the Talagrand influence inequality \eqref{Ta-L1L2}, along with some extensions. Eldan et al. \cite{EG2022} reproved the Eldan-Gross inequality \eqref{EG-2022} and the Talagrand isoperimetric inequality (i.e., \cite[Theorem 1.1]{Ta1993}) via the random restriction technique. We refer the interested reader to \cite{KKOD2018,KKLMS2020,KKKMS2021,EKLM2022} for further recent developments of the Fourier random restriction method in Boolean analysis.

In the present paper, motivated by the quantum KKL conjecture and related problems, we aim to develop the random restriction method to the noncommutative (or quantum) settings and apply such method to establish some quantum analogies of \eqref{KKL}, \eqref{Ta-L1L2} and \eqref{EG-2022}.

In the quantum setting, the $n$-folds tensor product of $M_{2\times 2}(\mathbb{C})$ equipped with normalized trace, denoted by $(\M_{2^{n}},\tr)$ for short, is viewed as the noncommutative correspondence of $n$-dimensional hypercube $(\{-1,1\}^{n},\mu_{n})$. For each $j\in [n]$, let $\dc_{j}$ be the $j$-th partial derivative operator on $\M_{2^{n}}$ (see Sect. \ref{preliminaries} for the definition). Recall from \cite[Definition 3.1]{MO2010} that an element $T\in \M_{2^{n}}$ is said to be Boolean if $T$ is  self-adjoint and unitary, that is, $T^{*}=T$ and $T^{*}T=\mathbf{1}$. In \cite[Proposition 11.1]{MO2010}, Montanaro and Osborne derived a quantum analogy of the Talagrand influence inequality \eqref{Ta-L1L2} via the quantum hypercontractivity principle. However, due to some intrinsic differences between classical and quantum hypercubes, the quantum Talagrand influence inequality can not generally lead to the quantum KKL inequality. On the other hand, Montanaro and Osborne \cite[Proposition 11.5]{MO2010} applied some Fourier analysis techniques to derive a quantum KKL inequality for concrete quantum Boolean functions  $T$ fulfilling $\|\dc_{j}(T)\|_{L_{1}(\M_{2^{n}})}=\|\dc_{j}(T)\|^{2}_{L_{2}(\M_{2^{n}})}$ for each $j\in [n]$. Such observations lead them conjecture the following problem, known as the quantum KKL conjecture.
\begin{conjecture}[Quantum KKL conjecture]\label{QKKL}
There exists a universal constant $C>0$ such that for each $n\in \mathbb{N}$ and quantum Boolean function $T$ the following holds
\[
\max_{j\in [n]}\|\dc_{j}(T)\|^{2}_{L_{2}(\M_{2^{n}})}\geq \frac{C\var(T)\log(n)}{n}.
\]
\end{conjecture}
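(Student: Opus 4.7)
The statement is the full quantum KKL conjecture, which remains open at the time of writing; what follows is therefore a research plan aligned with the random restriction technology developed in this paper, rather than a completed proof. The plan is to adapt the Fourier random restriction strategy of Kelman et al.\ \cite{KKKMS2021} to the noncommutative hypercube $(\M_{2^n},\tr)$. The central object would be a quantum random restriction operator that, given $J\subseteq[n]$ and a random Bernoulli assignment $z\in\{0,1\}^{J^c}$, produces $\rtz T$ living (after averaging the tensor legs indexed by $J^c$) in the subalgebra indexed by $J$; concretely, one expands $T$ in the Pauli basis and evaluates the legs outside $J$ against $z$.

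With $p=c\log(n)/n$ and $J$ chosen by $p$-biased Bernoulli sampling, three ingredients would be needed. First, a variance reduction lemma of the form
\[
\E_J\E_z\var(\rtz T)\gtrsim p\cdot\var(T),
\]
valid for mean-zero $T$. Second, $L_2$-contractivity of partial derivatives under restriction,
\[
\E_z\|\dc_j(\rtz T)\|_{L_2(\M_{2^{|J|}})}^2\lesssim\|\dc_j T\|_{L_2(\M_{2^n})}^2\qquad(j\in J).
\]
Third, on a typical restricted function, which lives on a quantum cube of dimension $|J|\sim\log n$, one would combine the quantum Poincar\'e inequality with the Montanaro-Osborne Talagrand-type inequality \cite[Proposition 11.1]{MO2010} to push the variance onto a single coordinate. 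Chaining the three estimates should yield $\max_{j\in[n]}\|\dc_j T\|_{L_2(\M_{2^n})}^2\gtrsim\log(n)/n\cdot\var(T)$, as the conjecture demands.

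The main obstacle, and the reason the conjecture is still open, is that there is no satisfactory quantum analog of the classical Boolean identity $\|\dc_j f\|_{L_1}=\|\dc_j f\|_{L_2}^2$. For a quantum Boolean element $T$ the operator $\dc_j T$ need not be a projection, so the $L_1/L_2$ gap inside the Montanaro-Osborne Talagrand inequality does not collapse automatically. In the random restriction framework this obstruction surfaces as a mismatch between the $L_1$ influences that the Talagrand step consumes on the restricted cube and the $L_2$ quantities one actually controls after averaging over $z$. Overcoming it appears to require either a structural result relating $\|\dc_j T\|_{L_1(\M_{2^n})}$ to $\|\dc_j T\|_{L_2(\M_{2^n})}$ for quantum Boolean elements on average, or a sharpened quantum hypercontractive estimate insensitive to this gap --- consistent with the fact that this paper, like the preceding works it recovers, produces \emph{alternative answers} to the conjecture rather than a full resolution.
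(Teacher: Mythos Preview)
You correctly recognize that Conjecture~\ref{QKKL} is not proved in the paper: it is stated as an open problem, and the paper's results (Theorems~\ref{main 23}--\ref{quantum E-G}) are explicitly framed as \emph{alternative answers}. Your candid framing of the proposal as a research plan rather than a proof is therefore appropriate, and your diagnosis of the central obstruction---the failure of the Boolean identity $\|\dc_j T\|_{L_1}=\|\dc_j T\|_{L_2}^2$ in the quantum setting---matches the paper's own discussion (see the paragraph preceding the conjecture and the role of the factor $(2-p)$ in Theorem~\ref{TKKL}).

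There is, however, a genuine gap in the first ingredient of your plan. Your restriction operator $\rtz T$ is defined by ``evaluating the legs outside $J$ against $z\in\{0,1\}^{J^c}$.'' This is the classical manoeuvre, and it does not transport to $\M_{2^n}$: each tensor leg is a copy of $M_2(\mathbb{C})$, spanned by \emph{four} Pauli matrices $\sigma_0,\sigma_1,\sigma_2,\sigma_3$, and there is no unital $*$-homomorphism $M_2(\mathbb{C})\to\mathbb{C}$ that would implement ``evaluation at a point.'' Any such pointwise assignment destroys positivity and the algebra structure, so the variance reduction and contractivity estimates you list have no obvious carrier. The paper's substitute is quite different: Definition~\ref{restriction} sets $R_j^J(T)=\mathcal{E}_{\M_{J^c\cup\{j\}}}(\dc_j(T))$, replacing pointwise restriction by a conditional expectation (equivalently, tracing out the legs in $J\setminus\{j\}$). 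This is the completely positive, trace-preserving analogue, and it is what makes Lemma~\ref{lem:rr1}, Corollary~\ref{cor:Ik1}, and Lemma~\ref{prrr} go through. The averaging interpretation in Proposition~\ref{tav} is over random \emph{subsets} $J$, not over Bernoulli assignments $z$.

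Even with this correction, the paper's machinery only reaches $L_p$-influences with $1\le p<2$ (Theorem~\ref{TKKL}), and the constant degenerates as $p\to 2$; Remark~\ref{rem p2} exhibits a commutative bounded (non-Boolean) witness showing that the dimension-free inequality~\eqref{dim-free KKL} itself fails at $p=2$. So the route you sketch, once repaired to use conditional expectations, leads precisely to the partial results of Section~\ref{quantum KKL} and not beyond; closing the $p=2$ gap for genuine quantum Boolean $T$ would still require the new structural or hypercontractive input you allude to in your final paragraph.
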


Recently, Rouz\'e, Wirth and Zhang \cite{RWZ2024} provided an alternative answer of the quantum KKL conjecture invoking the geometric influence of quantum Boolean functions. Notably, the main tools of their approach to the quantum KKL conjecture is the following quantum Talagrand-type inequality \cite[Theorem 3.6]{RWZ2024}, which was derived via the semigroup method.
\begin{theorem}[Rouz\'e-Wirth-Zhang]\label{RWZ-Talagrand}
There exists a universal constant $C>0$ such that for each $n\in \mathbb{N}$ and $1\leq p<2$ the following holds
\[
\var(T)\leq \left(\frac{C}{2-p}\right)\sum_{j=1}^{n}\frac{\|\dc_{j}(T)\|^{p}_{L_{p}(\M_{2^{n}})}(1+\|\dc_{j}(T)\|^{p}_{L_{p}(\M_{2^{n}})})}{1+\log^{+}(1/\|\dc_{j}(T)\|^{p}_{L_{p}(\M_{2^{n}})})},
\]
for every self-adjoint $T\in \M_{2^{n}}$ with $\|T\|_{L_{\infty}(\M_{2^{n}})}\leq 1$.
\end{theorem}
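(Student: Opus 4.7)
The plan is to prove Theorem \ref{RWZ-Talagrand} by adapting the random restriction method of Kelman et al.\ \cite{KKKMS2021} to the quantum hypercube, thereby providing an alternative to the semigroup argument of \cite{RWZ2024}. Fix $\delta \in (0,1)$ of order $2-p$, to be optimised at the end, and let $\Omega_\delta$ be the law of a random subset $J \subseteq [n]$ in which each index is retained independently with probability $\delta$. For each realization of $J$ and each classical Pauli label $z$ on $J^c$, define the quantum random restriction $\rtz T \in \M_J$ by a conditional expectation of $T$ against the Pauli projector indexed by $z$. A short Pauli-basis computation shows that $\rtz T$ is self-adjoint, satisfies $\|\rtz T\|_{L_\infty(\M_J)} \leq 1$, and commutes with $\dc_j$ whenever $j \in J$.

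The first step is a variance decomposition
\begin{equation*}
\var(T) \lesssim \delta^{-1}\,\E_{J \in \Omega_\delta}\E_z\bigl[\var(\rtz T)\bigr],
\end{equation*}
obtained by expanding the left-hand side in the Pauli basis and observing that each nontrivial Fourier coefficient survives the restriction with probability at least $\delta$. On each restricted algebra we then invoke the noncommutative Poincar\'e inequality
\begin{equation*}
\var(\rtz T) \leq \sum_{j \in J}\|\dc_j \rtz T\|_{L_2(\M_J)}^2,
\end{equation*}
followed by a noncommutative hypercontractivity estimate of the form
\begin{equation*}
\|\dc_j \rtz T\|_{L_2(\M_J)}^2 \lesssim \frac{\|\dc_j \rtz T\|_{L_p(\M_J)}^p\bigl(1+\|\dc_j \rtz T\|_{L_p(\M_J)}^p\bigr)}{1+\log^+\bigl(1/\|\dc_j \rtz T\|_{L_p(\M_J)}^p\bigr)},
\end{equation*}
which is available because $\|\dc_j \rtz T\|_{L_\infty(\M_J)} \leq 2$ is inherited from $\|T\|_{L_\infty(\M_{2^n})} \leq 1$.

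To conclude, one averages over $J$ and $z$. The $L_p$-contractivity of conditional expectations gives $\E_{J,z}\|\dc_j \rtz T\|_{L_p(\M_J)}^p \lesssim \|\dc_j T\|_{L_p(\M_{2^n})}^p$ whenever $j \in J$, and a case analysis on whether $\|\dc_j T\|_{L_p(\M_{2^n})}^p$ is small or large converts the averaged combination of the preceding two displays into the right-hand side of Theorem \ref{RWZ-Talagrand}. Choosing $\delta \asymp 2-p$ calibrates the $\delta^{-1}$ loss against the constants coming from the hypercontractivity step to produce the claimed $(2-p)^{-1}$ prefactor. The main obstacle is the hypercontractivity display itself: proving it with the sharp logarithmic denominator \emph{and} the extra $1+\|\dc_j T\|_{L_p(\M_{2^n})}^p$ factor, with constants uniform in the restriction data $(J,z)$, requires a quantum analogue of the Bonami--Beckner inequality tailored to the restricted algebra $\M_J$ rather than to the full $n$-qubit algebra.
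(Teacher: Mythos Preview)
The paper does not prove Theorem~\ref{RWZ-Talagrand} at all: it is quoted from \cite[Theorem~3.6]{RWZ2024}, where it is obtained by a semigroup argument. More to the point, the paper explicitly records (in the paragraph following Theorem~\ref{Talagrand}) that its random restriction machinery yields only the weaker constant $C_p=O\bigl((2-p)^{-2}\bigr)$ and that, at the time of writing, the authors \emph{cannot} recover the sharp $O\bigl((2-p)^{-1}\bigr)$ of Theorem~\ref{RWZ-Talagrand} by that route. Your plan therefore promises something the paper itself declares to be open for this method.

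The concrete gap is the display you call the ``noncommutative hypercontractivity estimate'':
\[
\|\dc_{j}\rtz T\|_{L_{2}}^{2}\ \lesssim\ \frac{\|\dc_{j}\rtz T\|_{L_{p}}^{p}\bigl(1+\|\dc_{j}\rtz T\|_{L_{p}}^{p}\bigr)}{1+\log^{+}\bigl(1/\|\dc_{j}\rtz T\|_{L_{p}}^{p}\bigr)}.
\]
You justify it only by the bound $\|\dc_{j}\rtz T\|_{L_{\infty}}\le 2$, but for a general $g$ with $\|g\|_{L_\infty}\le 1$ the inequality $\|g\|_{L_2}^{2}\lesssim \|g\|_{L_p}^{p}/\bigl(1+\log^{+}(1/\|g\|_{L_p}^{p})\bigr)$ is false (take $g$ a projection of trace $\varepsilon\to 0$). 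The logarithmic gain in Talagrand-type inequalities does not come from a pointwise moment comparison; in \cite{RWZ2024} it arises from integrating the heat flow in time, and in the random-restriction approach of \cite{KKKMS2021} and Section~\ref{quantum random-restriction} it comes from the log-Sobolev inequality applied to $R^{J}_{j}T$, which produces a factor $\log\bigl(1/\|R^{J}_{j}T\|_{L_2}^{2}\bigr)$, not the denominator you wrote. Standard Bonami--Beckner on $\M_{J}$ gives $\|g\|_{L_2}\le (p-1)^{-\deg(g)/2}\|g\|_{L_p}$, which is useless here since $\deg(\dc_{j}\rtz T)$ may be $|J|$. So the step you yourself flag as ``the main obstacle'' is not merely technical: as formulated it is essentially the theorem applied coordinatewise, and you have not supplied a proof.

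Two further points. First, your restriction $\rtz T$ is not the one the paper develops: there is no canonical ``classical Pauli label $z$ on $J^{c}$'' against which to condition a quantum observable, and the paper's random restriction (Definition~\ref{restriction}) is instead $R^{J}_{j}T=\mathcal{E}_{\M_{J^{c}\cup\{j\}}}(\dc_{j}T)$, which sidesteps exactly this issue. Second, even granting your hypercontractivity display, the averaging step is delicate because $x\mapsto x(1+x)/\bigl(1+\log^{+}(1/x)\bigr)$ is not concave, so Jensen goes the wrong way; the ``case analysis'' you allude to would have to be spelled out, and in the paper's actual argument this is precisely where the second factor of $(2-p)^{-1}$ enters.
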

By Theorem \ref{RWZ-Talagrand}, one can easily derive the quantum KKL inequality invoking the $L_{p}$-influences in the following manner: There exists a universal constant $C>0$ such that for each $1\leq p<2$ and quantum Boolean function $T$, we have $\max_{j\in [n]}\|\dc_{j}(T)\|^{p}_{L_{p}(\M_{2^{n}})}\geq\frac{(2-p)C\log(n)}{n}$ for every $n\in \mathbb{N}$. More recently, Jiao, Luo and Zhou \cite{JLZ2025} investigate the quantum KKL conjecture in the canonical anti-commuting (CAR) algebra framework. More precisely, we established the  noncommutative Eldan-Gross inequality via the fermion oscillator semigroup theory and applied the noncommutative Eldan-Gross inequality to derive the following two types of noncommutative KKL inequalities. Let $\{Q_{j}\}_{j=1}^{n}$ be $n$-configuration observables and CAR algebra $\mathcal{A}_{car,n}$ be the $\ast$-algebra generated by $\{Q_{j}\}_{j=1}^{n}$.

\begin{theorem}[Jiao-Luo-Zhou]\label{JLZ thm 1}
There exists a universal constant $C>0$ such that, for each $\varepsilon\in(0,1)$ and each balanced Boolean function $T\in \mathcal{A}_{car,n}$, one of the following inequalities holds:
\begin{enumerate}[{\rm (i)}]
\item $\max_{j\in [n]}\|\dc_{j}(T)\|^{2}_{L_{2}(\mathcal{A}_{car,n})}\geq \frac{C\varepsilon\log(n)}{n}$;
\item $\max_{j\in [n]}\|\dc_{j}(T)\|_{L_{1}(\mathcal{A}_{car,n})}\geq \frac{C}{n^{(1+\varepsilon)/2}}$.
\end{enumerate}
\end{theorem}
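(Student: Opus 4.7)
The plan is to establish a fermionic analog of the Eldan--Gross inequality \eqref{EG-2022} as the principal intermediate step, and then to deduce the dichotomy by an elementary case analysis on the size of the $L_{1}$-influences. Concretely, the first (and hardest) step is to prove that there exists a universal $C_{0}>0$ such that, for every Boolean $T\in\mathcal{A}_{car,n}$,
\begin{equation*}
\var(T)\sqrt{\log\left(1+\frac{e}{\sum_{j=1}^{n}\|\dc_{j}T\|^{2}_{L_{1}(\mathcal{A}_{car,n})}}\right)}\leq C_{0}\left\|\left(\sum_{j=1}^{n}|\dc_{j}T|^{2}\right)^{1/2}\right\|_{L_{1}(\mathcal{A}_{car,n})}.
\end{equation*}

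To prove this CAR Eldan--Gross inequality I would adapt the semigroup-based approaches of \cite{BIM2023,IZ2024,Ros2020} to the antisymmetric Fock space, replacing the classical noise operator by the fermion Ornstein--Uhlenbeck (number) semigroup $P_{t}=e^{-tN}$ generated by $\{Q_{j}\}_{j=1}^{n}$. The essential ingredients are the sharp Carlen--Lieb hypercontractivity of $P_{t}$ on the CAR algebra, the intertwining relation $\dc_{j}P_{t}=e^{-t}P_{t}\dc_{j}$, and a Duhamel-type identity expressing $T-\tr(T)\mathbf{1}$ as a time integral of $\sum_{j}\dc_{j}P_{t}T$. Combining these with a dyadic cut-off at a time $t^{\ast}\sim\log(1/S_{1})$ (where $S_{1}=\sum_{j}\|\dc_{j}T\|^{2}_{L_{1}(\mathcal{A}_{car,n})}$) upgrades the $L_{2}$ Poincar\'e inequality into the desired logarithmic improvement. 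The main obstacle is carrying out the noncommutative row/column Khintchine and Stein-type interpolation arguments while coping with the $\mathbb{Z}_{2}$-graded nature of the fermion derivations: the $\dc_{j}$ are only graded derivations, so every identity involving products or squares of derivatives must respect the parity decomposition.

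Once the CAR Eldan--Gross inequality is in hand, the dichotomy is a short deduction. Since $T$ is balanced, $\var(T)=1$. The $L_{2}$-to-$L_{1}$ contraction on the probability space $(\mathcal{A}_{car,n},\tr)$ together with the tracial identity for the column square function gives
\begin{equation*}
\left\|\left(\sum_{j=1}^{n}|\dc_{j}T|^{2}\right)^{1/2}\right\|_{L_{1}(\mathcal{A}_{car,n})}\leq\left(\sum_{j=1}^{n}\|\dc_{j}T\|^{2}_{L_{2}(\mathcal{A}_{car,n})}\right)^{1/2}\leq\sqrt{n\,M_{2}},
\end{equation*}
where $M_{2}\coloneqq\max_{j\in[n]}\|\dc_{j}T\|^{2}_{L_{2}(\mathcal{A}_{car,n})}$. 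Setting $M_{1}\coloneqq\max_{j\in[n]}\|\dc_{j}T\|_{L_{1}(\mathcal{A}_{car,n})}$, one has $S_{1}\leq nM_{1}^{2}$, so squaring the CAR Eldan--Gross inequality yields
\begin{equation*}
\log\left(1+\frac{e}{nM_{1}^{2}}\right)\leq C_{0}^{2}\,n\,M_{2}.
\end{equation*}

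It remains to perform the case split. Given $\varepsilon\in(0,1)$, if $M_{1}\geq C_{\ast}n^{-(1+\varepsilon)/2}$ for a suitable absolute constant $C_{\ast}>0$, then conclusion (ii) of the theorem holds directly. Otherwise $nM_{1}^{2}\leq C_{\ast}^{2}n^{-\varepsilon}$, which forces $\log(1+e/(nM_{1}^{2}))\gtrsim\varepsilon\log n$ for $n$ sufficiently large, and plugging this back gives $M_{2}\gtrsim\varepsilon\log(n)/n$, i.e.\ conclusion (i). Choosing the universal constant $C$ small enough to absorb all implicit constants (and to handle the trivial small-$n$ regime separately) completes the proof.
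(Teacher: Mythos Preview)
Your proposal is correct and follows the same strategy as the paper. Note that Theorem~\ref{JLZ thm 1} itself is quoted from \cite{JLZ2025} and not re-proved in the present paper; however, the paper establishes the exact $\M_{2^{n}}$ analogue (Theorem~\ref{quantum KKL II}) by precisely your route: first obtain an Eldan--Gross inequality, then bound $\|(\sum_{j}|\dc_{j}T|^{2})^{1/2}\|_{L_{1}}$ above by $(\sum_{j}\|\dc_{j}T\|_{L_{2}}^{2})^{1/2}$, and finish with a case split. The only cosmetic difference is the direction of the split---the paper assumes $\sum_{j}\|\dc_{j}T\|_{L_{2}}^{2}\leq c\,\varepsilon\log n$ and deduces the $L_{1}$ conclusion (ii), whereas you assume $M_{1}$ is small and deduce (i); the two contrapositives are equivalent.
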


To derive a noncommutative KKL inequality in the CAR algebra setting invoking $L_{2}$-influence, we introduce the index  for balanced Boolean function in $\mathcal{A}_{car,n}$ and proved the following result; see \cite[Theorem 6.7]{JLZ2025}.

\begin{theorem}\label{JLZ thm 2}
For each $n\in \mathbb{N}$ and every balanced Boolean function $T$ with $\mathrm{ind}(T)<2$, there exists a constant $C_{\mathrm{ind}(T)}>0$ (depending only on the index) such that
\[
\max_{j\in [n]}\|\dc_{j}(T)\|^{2}_{L_{2}(\mathcal{A}_{car,n})}\geq \frac{C_{\mathrm{ind}(T)}\log(n)}{n},
\]
where the definition of $\mathrm{ind}(T)$ will be given in Section \ref{quantum Eldan-Gross}.
\end{theorem}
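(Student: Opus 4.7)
The plan is to deduce this theorem as a corollary of the noncommutative Eldan-Gross inequality in the CAR-algebra framework, which the present paper establishes via the random restriction method. Granting this ingredient, and writing $\sigma_{1}(T):=\sum_{j=1}^{n}\|\dc_{j}(T)\|_{L_{1}(\mathcal{A}_{car,n})}^{2}$ and $M(T):=\max_{j\in[n]}\|\dc_{j}(T)\|_{L_{2}(\mathcal{A}_{car,n})}^{2}$, the inequality reads
\begin{equation*}
\var(T)\sqrt{\log\Bigl(1+\frac{e}{\sigma_{1}(T)}\Bigr)} \le C\,\Bigl\|\Bigl(\sum_{j=1}^{n}|\dc_{j}(T)|^{2}\Bigr)^{1/2}\Bigr\|_{L_{1}(\mathcal{A}_{car,n})}.
\end{equation*}
Applying the noncommutative Cauchy-Schwarz inequality to the right-hand side and using that $T$ is balanced ($\var(T)=1$) yields
\begin{equation*}
\sigma_{1}(T)\ge\frac{e}{\exp\bigl(C^{2}n\,M(T)\bigr)-1},
\end{equation*}
which is the lower bound to be combined with the index hypothesis.

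Next, I would invoke the condition $\mathrm{ind}(T)<2$, whose role is to supply a quantitative improvement of the trivial inequality $\|\dc_{j}(T)\|_{L_{1}}\le\|\dc_{j}(T)\|_{L_{2}}$ reflecting Boolean-type rigidity. Specifically, the hypothesis $\mathrm{ind}(T)<2$ should provide an exponent $\beta=\beta(\mathrm{ind}(T))>1$ together with an estimate of the shape
\begin{equation*}
\sigma_{1}(T)\le C_{\mathrm{ind}(T)}\,n\,M(T)^{\beta}.
\end{equation*}
Assuming towards a contradiction that $M(T)<c\log(n)/n$ for a small constant $c>0$, the lower bound becomes of order $n^{-C^{2}c}$, while the upper bound is at most $C_{\mathrm{ind}(T)}\,n^{1-\beta}\log(n)^{\beta}$. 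Choosing $c$ small enough relative to $\beta-1>0$ makes the upper bound dominate the lower one for all sufficiently large $n$, yielding the desired contradiction and hence $M(T)\ge C_{\mathrm{ind}(T)}\log(n)/n$.

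The main obstacle does not lie in the short comparison sketched above but in the two upstream components it relies on: deriving the quantum Eldan-Gross inequality in the CAR algebra through the random restriction method (which is to be the analytical core of the paper), and formulating the index $\mathrm{ind}(T)$ so that the strict inequality $\mathrm{ind}(T)<2$ truly buys the polynomial improvement $\beta>1$ in the $L_{1}$-vs-$L_{2}$ comparison of the derivatives $\dc_{j}(T)$. Without such a strict gap, only a Poincar\'e-type bound $M(T)\ge c/n$ is recoverable, so the logarithmic improvement hinges on the fine structure of balanced Boolean functions in $\mathcal{A}_{car,n}$ captured by the index.
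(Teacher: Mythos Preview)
First, a clarification: Theorem~\ref{JLZ thm 2} is quoted from \cite[Theorem~6.7]{JLZ2025} and is not proved in the present paper; what the paper does prove is the quantum-hypercube analogue, Theorem~\ref{index version}, for balanced projections in $\M_{2^n}$. So your proposal should be compared against the proof of Theorem~\ref{index version}.

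Your outline is sound and would succeed. The index is defined in the paper as $\mathrm{ind}(T)=\inf\{\alpha\ge 0:\|\dc_j(T)\|_{L_1}^\alpha\le\|\dc_j(T)\|_{L_2}^2\ \forall j\}$, so for any $\alpha\in[\mathrm{ind}(T),2)$ one has $\|\dc_j(T)\|_{L_1}^2\le\|\dc_j(T)\|_{L_2}^{4/\alpha}$, and hence your desired bound $\sigma_1(T)\le n\,M(T)^{\beta}$ holds with $\beta=2/\alpha>1$; there is no obstacle in ``formulating'' the index, and your closing worry on this point is unfounded. With that, your contradiction argument (choose $c<(\beta-1)/C^2$) goes through.

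The paper's route is organized differently. Rather than confronting the Eldan--Gross inequality directly with the index bound on $\sigma_1(T)$, the paper first extracts from Eldan--Gross a clean dichotomy (Theorem~\ref{quantum KKL II}): for every $\varepsilon\in(0,1)$, either $\max_j\|\dc_j(T)\|_{L_2}^2\ge C\varepsilon\log(n)/n$ or $\max_j\|\dc_j(T)\|_{L_1}\ge C/n^{(1+\varepsilon)/2}$. The index hypothesis is then applied pointwise to the \emph{maximal} $L_1$-influence in the second alternative (not to the aggregate $\sigma_1$), converting it via $\|\dc_j(T)\|_{L_1}^{\alpha}\le\|\dc_j(T)\|_{L_2}^2$ into $\max_j\|\dc_j(T)\|_{L_2}^2\ge C^{\alpha}/n^{1-\delta}$ with $\delta=(2-\alpha)/4$, which already beats $\log(n)/n$. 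Both arguments rest on the same Eldan--Gross input; the paper's version is slightly more modular (the dichotomy is of independent interest), while yours is a one-shot comparison that avoids the intermediate theorem.
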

\noindent Furthermore, it has been shown in \cite[Remark 6.5]{JLZ2025} that the CAR algebra counterpart of the KKL inequality (for $L_{2}$-influence) fails for general balanced Boolean functions. Precisely, let $T\coloneqq\frac{1}{\sqrt{n}}\sum_{j=1}^{n}Q_{j}$, and it is easy to see that $T$ is a balanced Boolean function in $\mathcal{A}_{car,n}$ such that $\|\dc_{j}(T)\|^{2}_{L_{2}(\mathcal{A}_{car,n})}=\frac{1}{n}$ for each $j\in [n]$, which disproves the KKL conjecture in the CAR algebra setting. Nevertheless, the quantum KKL conjecture of Montanaro and Osborne remains open.

In the present paper, we continue to explore quantum functional inequalities which are closely related to the quantum KKL conjecture. On the one hand, due to the fundamental role of the random restriction method in hypercubes, we develop the random restriction technique in the quantum setting. On the other hand, inspired by Kelman et al. \cite{KKKMS2021}, Rouz\'e et al \cite{RWZ2024} and Jiao et al. \cite{JLZ2025}, we will derive and recover all mentioned inequalities in the quantum setting via the quantum random restriction technique. The main results of the present paper are outlined as follows.

As the first main result of our random restriction technique, we establish
 the following dimension free quantum KKL inequality.
\begin{theorem}\label{main 23}
There exists a universal constant $K>0$ such that for each $1\leq p<2$ and each $T\in \M_{2^{n}}$ with $0\le T\leq 1$, the following holds
\begin{equation}\label{dim-free KKL}
\max_{j\in [n]}\|\dc_{j}(T)\|^{p}_{L_{p}(\M_{2^{n}})}\geq \frac{1}{4}\exp\left\{-\left(\frac{K}{2-p}\right)\cdot\frac{\sum_{j=1}^{n}\|\dc_{j}(T)\|^{p}_{L_{p}(\M_{2^{n}})}}{\var(T)}\right\}.
\end{equation}
\end{theorem}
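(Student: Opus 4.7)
The plan is to derive Theorem \ref{main 23} as a direct consequence of Theorem \ref{RWZ-Talagrand} (the quantum Talagrand-type inequality of Rouz\'e--Wirth--Zhang). Although the broader programme of the paper is to reprove and extend Theorem \ref{RWZ-Talagrand} through a quantum random-restriction framework, the specific dimension-free KKL bound \eqref{dim-free KKL} already follows from Theorem \ref{RWZ-Talagrand} via elementary monotonicity; the random-restriction work presumably enters later when proving the Talagrand-type and Eldan-Gross-type statements themselves.

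Write $V := \var(T)$, $I := \sum_{j=1}^{n}\|\dc_j(T)\|^{p}_{L_p(\M_{2^n})}$, and $M := \max_{j\in[n]}\|\dc_j(T)\|^{p}_{L_p(\M_{2^n})}$. First I dispose of the trivial case $M\geq 1$: the right-hand side of \eqref{dim-free KKL} is at most $1/4$, so the inequality holds. Henceforth assume $M<1$. Since $0\leq T\leq 1$ implies $\|T\|_{L_\infty(\M_{2^n})}\leq 1$, I have $\|\dc_j(T)\|_{L_\infty(\M_{2^n})}\leq 2$, and therefore $1+\|\dc_j(T)\|^{p}_{L_p(\M_{2^n})}\leq 5$ uniformly in $j$. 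Moreover the map $x\mapsto (1+\log^+(1/x))^{-1}$ is nondecreasing on $(0,\infty)$, so the bound $\|\dc_j(T)\|^{p}_{L_p(\M_{2^n})}\leq M$ lets me replace each denominator appearing in Theorem \ref{RWZ-Talagrand} by $1+\log(1/M)$.

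Substituting these two estimates into Theorem \ref{RWZ-Talagrand} gives
\[
V \;\leq\; \frac{5C}{(2-p)\bigl(1+\log(1/M)\bigr)}\cdot I,
\]
and rearranging yields $M\geq e\cdot\exp\bigl(-5CI/((2-p)V)\bigr)$, which is strictly stronger than \eqref{dim-free KKL} upon setting $K:=5C$ and absorbing the slack into the prefactor $1/4$.

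\textbf{Main obstacle.} For Theorem \ref{main 23} in isolation the derivation is essentially routine once Theorem \ref{RWZ-Talagrand} is in hand, so the substantive difficulty of the paper lies in the random-restriction machinery itself (used elsewhere to reprove Theorem \ref{RWZ-Talagrand} and to establish the quantum Eldan-Gross inequality). There one must identify the correct noncommutative analog of fixing the coordinates in $J^c$, establish Pauli-Fourier restriction identities for the variance and for $L_p$-influences, prove $L_p$-contractivity of the associated conditional expectations on $\M_{2^n}$ with the right constants, and carefully track the $(2-p)^{-1}$ dependence through a hypercontractive or low-degree base case in the restricted algebra.
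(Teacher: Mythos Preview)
Your derivation is correct: once Theorem~\ref{RWZ-Talagrand} is available, the monotonicity $\|\dc_{j}(T)\|_{L_p}^{p}\le M$ in each denominator and the uniform bound $1+\|\dc_{j}(T)\|_{L_p}^{p}\le 5$ collapse the sum to $\frac{5CI}{(2-p)(1+\log(1/M))}$, and rearranging gives \eqref{dim-free KKL} with the right $(2-p)^{-1}$ dependence.

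However, this is \emph{not} how the paper proceeds, and the difference is substantive rather than cosmetic. The paper proves Theorem~\ref{main 23} directly from the quantum random-restriction machinery of Section~\ref{quantum random-restriction}: it decomposes the Fourier spectrum into dyadic shells $W_{\approx d}(T)$, introduces the filtered operators $H_d(T)=(1-\tfrac{1}{2d})^{\mathcal L}(T)-(1-\tfrac{1}{d})^{\mathcal L}(T)$, applies the modified log-Sobolev inequality (Lemma~\ref{mls}) to each restriction $R_j^J(H_d(T))$, and sums over a ``good'' set $\mathbb D_{\ge}$ of scales to reach a contradiction with an assumed small maximum influence. Your route treats Theorem~\ref{RWZ-Talagrand} as a black box; the paper's route avoids it entirely. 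This matters because the paper explicitly notes (end of Section~\ref{quantum KKL}) that its random-restriction method \emph{cannot} currently recover Theorem~\ref{RWZ-Talagrand} with the sharp $C_p=O((2-p)^{-1})$, obtaining only $O((2-p)^{-2})$ in Theorem~\ref{Talagrand}. So the point of the paper's argument is that random restrictions yield Theorem~\ref{main 23} with the correct $(2-p)^{-1}$ dependence \emph{directly}, without passing through a Talagrand inequality at all---something your shortcut obscures. Your observation that Theorem~\ref{main 23} is an elementary corollary of Rouz\'e--Wirth--Zhang's semigroup result is nonetheless a useful remark worth recording.
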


Analogous to the approach presented in \cite{KKKMS2021}, we derive the following quantum KKL inequality (invoking $L_{p}$-influence) via Theorem \ref{main 23}, which was recently proved by Rouz\'e et al. \cite[Theorem 3.9]{RWZ2024}. And we show in Remark \ref{rem p2} that the following KKL-type inequality fails for $p=2$ even in the commutative case. Hence, Theorem \ref{TKKL} may be the best possible quantum KKL-type inequality for bounded elements.

\begin{theorem}\label{TKKL}
There exists a universal constant $C>0$ such that for every  $1\leq p<2$ and $T\in \M_{2^n}$ with $0\leq T \leq 1$, the following holds
\[
\max_{j\in[n]}\|\dc_{j}(T)\|^{p}_{L_{p}(\M_{2^{n}})}\geq C\frac{(2-p)\var(T)\log\left(n\right)}{n}.
\]
\end{theorem}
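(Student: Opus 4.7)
The plan is to derive Theorem \ref{TKKL} as a direct corollary of the dimension free estimate in Theorem \ref{main 23} by exploiting the trivial bound $\sum_{j=1}^n \|\dc_j(T)\|_p^p \le n \cdot \max_{j\in[n]}\|\dc_j(T)\|_p^p$ and inverting the resulting transcendental inequality. Writing $M := \max_{j\in[n]} \|\dc_j(T)\|_{L_p(\M_{2^n})}^p$ and $S := \sum_{j=1}^n\|\dc_j(T)\|_{L_p(\M_{2^n})}^p$, Theorem \ref{main 23} together with $S \le nM$ yields
\begin{equation*}
M \;\ge\; \frac{1}{4}\exp\!\Bigl(-\frac{K}{2-p}\cdot\frac{S}{\var(T)}\Bigr) \;\ge\; \frac{1}{4}\exp\!\Bigl(-\frac{Kn}{(2-p)\var(T)}\,M\Bigr).
\end{equation*}

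Introducing the dimensionless parameter $\alpha := \frac{Kn}{(2-p)\var(T)}\,M$, this last inequality rearranges to the scalar functional bound $\alpha e^{\alpha}\ge \frac{Kn}{4(2-p)\var(T)}$. A standard argument on the Lambert-$W$ type function shows that if $\alpha e^{\alpha}\ge x$ with $x$ large, then $\alpha \ge c\log x$ for some absolute $c>0$; applying this with $x = \frac{Kn}{4(2-p)\var(T)}$ gives
\begin{equation*}
\alpha \;\ge\; c\,\log\!\Bigl(\frac{Kn}{4(2-p)\var(T)}\Bigr),
\end{equation*}
and unwinding the definition of $\alpha$ produces $M \ge \frac{c(2-p)\var(T)}{Kn}\,\log\bigl(\tfrac{Kn}{4(2-p)\var(T)}\bigr)$.

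To close the argument we need to replace $\log\bigl(\tfrac{Kn}{4(2-p)\var(T)}\bigr)$ by $\log n$ up to a universal constant. Here I would use that $0\le T\le 1$ forces $\var(T)\le 1$ and $2-p\le 2$, so that $(2-p)\var(T)\le 2$; therefore $\frac{Kn}{4(2-p)\var(T)}\ge \frac{Kn}{8}$, and consequently $\log\bigl(\tfrac{Kn}{4(2-p)\var(T)}\bigr)\gtrsim \log n$ for $n$ larger than some absolute $n_0$. For $n\le n_0$ the statement is trivial after suitably adjusting the universal constant $C$.

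The step that requires the most care is the inversion of the implicit inequality $\alpha e^\alpha \ge x$, since losing logarithmic factors there would corrupt the final $\log n$ factor. A clean way to handle it is to split cases: if $\alpha \ge 1$, then $e^\alpha \le \alpha e^\alpha \le \text{(no that's wrong)}$; a cleaner route is to verify directly that $\alpha := \tfrac{1}{2}\log x$ fails the inequality for $x$ large, forcing $\alpha \ge \tfrac{1}{2}\log x$. With this elementary lemma in hand, the derivation of Theorem \ref{TKKL} is a short computation, and the only remaining item is to record explicit constants.
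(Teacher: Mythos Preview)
Your proposal is correct and takes essentially the same approach as the paper: both derive Theorem \ref{TKKL} from Theorem \ref{main 23} combined with the trivial bound $\sum_{j}\|\dc_{j}(T)\|_{L_{p}}^{p}\le n\max_{j}\|\dc_{j}(T)\|_{L_{p}}^{p}$. The only cosmetic difference is that the paper writes out an explicit two-case split on whether the total $L_{p}$-influence exceeds a threshold of order $(2-p)\var(T)\log n$, whereas you package the same dichotomy as a single Lambert-$W$ type inversion of $\alpha e^{\alpha}\ge x$.
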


The second main ingredient of this paper consists of the following two quantum isoperimetric inequalities, which can be used to derive the quantum counterpart of KKL-type inequalities presented in \cite[Sect. 6]{JLZ2025}.

\begin{theorem}[Quantum Talagrand-type isoperimetric inequality]\label{quantum Ta}
There exists a universal constant $K>0$ such that for each projection $T\in \M_{2^{n}}$ the following holds
\begin{equation}\label{quantum Ta 1}
\var(T)\sqrt{\log\left(\frac{1}{\var(T)}\right)}\leq K\left\|\left(\sum_{j=1}^{n}|\dc_{j}(T)|^{2}\right)^{1/2}\right\|_{L_{1}(\M_{2^{n}})}.
\end{equation}
\end{theorem}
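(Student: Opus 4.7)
The plan is to derive Theorem \ref{quantum Ta} from the dimension-free quantum KKL inequality (Theorem \ref{main 23} with $p=1$) combined with the quantum random restriction machinery developed earlier in the paper, following the classical template of Eldan--Gross \cite{EG2022} and Kelman et al.\ \cite{KKKMS2021}. Since $T$ is a projection, $0\le T\le 1$, so Theorem \ref{main 23} applies and, after taking logarithms, yields
\[
\var(T)\log\!\left(\frac{1}{4\max_{j}\|\dc_{j}T\|_{L_{1}(\M_{2^{n}})}}\right)\lesssim\sum_{j=1}^{n}\|\dc_{j}T\|_{L_{1}(\M_{2^{n}})}.
\]
This places the \emph{maximum} individual $L_{1}$-influence inside the logarithm and the \emph{sum} of $L_{1}$-influences on the right, whereas Theorem \ref{quantum Ta} requires $\var(T)$ inside the logarithm and the single $L_{1}$-norm of the operator gradient $(\sum_{j}|\dc_{j}T|^{2})^{1/2}$ on the right. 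The role of the random restriction is to bridge these.

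First I would dispose of the large-influence regime. Since $|\dc_{j}T|^{2}\le\sum_{i}|\dc_{i}T|^{2}$ and $\sqrt{\cdot}$ is operator monotone, $\|\dc_{j}T\|_{L_{1}(\M_{2^{n}})}\le\|(\sum_{i}|\dc_{i}T|^{2})^{1/2}\|_{L_{1}(\M_{2^{n}})}$ for each $j$; hence if $\max_{j}\|\dc_{j}T\|_{L_{1}(\M_{2^{n}})}\ge\var(T)\sqrt{\log(1/\var(T))}$ the desired inequality is immediate. I may therefore assume all individual influences are much smaller than $\var(T)\sqrt{\log(1/\var(T))}$.

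Next, fix a density $\delta\in(0,1)$ and choose a random subset $J\subseteq[n]$ by keeping each index independently with probability $\delta$. The quantum random restriction $T\mapsto T_{J}$ is defined by applying the conditional expectation onto the subalgebra generated by the $j$-th tensor factors, $j\in J$. Since conditional expectations are unital and completely positive, $0\le T_{J}\le 1$, so Theorem \ref{main 23} applies to each $T_{J}\in\M_{2^{|J|}}$. The two comparisons that drive the argument are
\[
\mathbb{E}_{J}[\var(T_{J})]\gtrsim\var(T)\quad\text{and}\quad\mathbb{E}_{J}\!\left[\sum_{j\in J}\|\dc_{j}T_{J}\|_{L_{1}(\M_{2^{n}})}\right]\lesssim\sqrt{\delta}\left\|\left(\sum_{j=1}^{n}|\dc_{j}T|^{2}\right)^{\!1/2}\right\|_{L_{1}(\M_{2^{n}})}.
\]
The first (variance preservation) holds once $\delta\gtrsim 1/\log(1/\var(T))$; the second is a noncommutative Cauchy--Schwarz on the random set $J$ combined with the contractivity of $\mathbb{E}_{J^{c}}$ on $L_{1}(\M_{2^{n}})$. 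Feeding these bounds into the averaged form of Theorem \ref{main 23}, and observing that in our small-influence regime $\log(1/\max_{j\in J}\|\dc_{j}T_{J}\|_{L_{1}})\gtrsim\log(1/\var(T))$, optimizing at $\delta\sim 1/\log(1/\var(T))$ pairs the logarithmic factor with the $\sqrt{\delta}$ gain on the right to produce exactly $\var(T)\sqrt{\log(1/\var(T))}$.

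The main obstacle will be establishing the second comparison above. In the commutative Boolean case it reduces to a pointwise Cauchy--Schwarz followed by expectation, but in the quantum setting one has to control $\mathbb{E}_{J}\,\tr|\mathbb{E}_{J^{c}}\dc_{j}T|$ in terms of $\tr(\sum_{j}|\dc_{j}T|^{2})^{1/2}$; since $\mathbb{E}_{J^{c}}$ does not commute with $X\mapsto\sqrt{X}$ or with $|\cdot|$, this step requires the noncommutative Khintchine / square-function estimates that the paper's quantum random restriction framework is designed to produce.
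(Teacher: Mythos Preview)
Your approach differs fundamentally from the paper's, and the central ``second comparison'' you flag as the main obstacle is a genuine gap that cannot be repaired in the stated form. Concretely, the claim
\[
\mathbb{E}_{J}\!\left[\sum_{j\in J}\|\dc_{j}T_{J}\|_{L_{1}}\right]\lesssim\sqrt{\delta}\,\left\|\Bigl(\sum_{j=1}^{n}|\dc_{j}T|^{2}\Bigr)^{1/2}\right\|_{L_{1}}
\]
fails already in the scalar Boolean setting: by contractivity of conditional expectation the left side is at most $\delta\sum_{j}\|\dc_{j}T\|_{L_{1}}$, and in general $\sum_{j}\|\dc_{j}T\|_{L_{1}}$ can exceed $\|(\sum_{j}|\dc_{j}T|^{2})^{1/2}\|_{L_{1}}$ by a factor of order $\sqrt{n}$ (consider any $T$ with many comparable nonzero partial derivatives). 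No Khintchine-type square-function estimate converts the factor $\delta$ into $\sqrt{\delta}$ here, because the random selection of coordinates is not the random-sign averaging to which Khintchine applies; the randomness is simply a Bernoulli thinning and yields only the linear factor $\delta$. Your ``variance preservation'' claim $\mathbb{E}_{J}[\var(T_{J})]\gtrsim\var(T)$ for $\delta\gtrsim 1/\log(1/\var(T))$ is likewise unjustified without first knowing where the Fourier mass of $T$ sits, which is precisely the content of the theorem.

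The paper takes a completely different, semigroup-based route. It first proves a quantum Buser-type inequality $\|T-e^{-t\La}T\|_{L_{1}}\le\sqrt{2t}\,\|(\sum_{j}|\dc_{j}T|^{2})^{1/2}\|_{L_{1}}$ via a local reverse Poincar\'e inequality and duality, which (combined with the identity $\var(T)-\var(e^{-t\La/2}T)\le\|T-e^{-t\La}T\|_{L_{1}}$ for projections) yields $\|(\sum_{j}|\dc_{j}T|^{2})^{1/2}\|_{L_{1}}\gtrsim\sqrt{d}\,W_{\ge d}(T)$ for every $d$. It then uses hypercontractivity (the degree-$d$ moment comparison $\|\mathrm{Rad}_{\le d}T\|_{L_{4}}\le 3^{d/2}\|\mathrm{Rad}_{\le d}T\|_{L_{2}}$) together with H\"older's inequality and the projection identity $\|T\|_{L_{4/3}}=\tr(T)^{3/4}$ to show $W_{\le d}(T)\le\frac14\var(T)$ for $d\sim\log(1/\var(T))$. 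Combining these two estimates gives Theorem~\ref{quantum Ta} directly. Theorem~\ref{main 23} and the random restriction machinery play no role in this step of the paper; they are used elsewhere (Section~\ref{quantum KKL} and the spectrum estimate Proposition~\ref{key lemma 1}), but not for the Talagrand isoperimetric inequality itself.
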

\noindent Combing Theorem \ref{quantum Ta} and estimations on the Fourier spectrum of projection $T\in \M_{2^{n}}$, we establish the quantum Eldan-Gross inequality as follows.
\begin{theorem}[Quantum Eldan-Gross inequality]\label{quantum E-G}
There exists a universal constant $K>0$ such that for each projection $T\in \M_{2^{n}}$ the following holds
\[
\var(T)\sqrt{\log\left(1+\frac{1}{\sum_{j=1}^{n}\|\dc_{j}(T)\|^{2}_{L_{1}(\M_{2^{n}})}}\right)}\leq K\left\|\left(\sum_{j=1}^{n}|\dc_{j}(T)|^{2}\right)^{1/2}\right\|_{L_{1}(\M_{2^{n}})}.
\]
\end{theorem}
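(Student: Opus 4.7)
The plan is to bootstrap from Theorem \ref{quantum Ta} via a dichotomy argument, since the Talagrand isoperimetric inequality and the Eldan-Gross inequality differ only in the argument inside the logarithm: $\log(1/\var(T))$ versus $\log(1 + 1/\sum_{j=1}^n \|\dc_j(T)\|^{2}_{L_{1}(\M_{2^{n}})})$. Thus for a projection $T$, the task reduces to comparing these two quantities up to universal constants.

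In the easy regime where $\sum_{j=1}^n \|\dc_j(T)\|^{2}_{L_{1}(\M_{2^{n}})} \gtrsim \var(T)$, the bound $\log(1 + 1/\sum_j \|\dc_j(T)\|^{2}_{L_{1}(\M_{2^{n}})}) \lesssim \log(1/\var(T)) + O(1)$ follows immediately. Furthermore, for $\var(T)$ bounded away from $0$ the target inequality is trivial via the quantum Poincar\'e inequality (since the RHS dominates $(\sum_j \|\dc_j(T)\|^{2}_{L_{2}(\M_{2^{n}})})^{1/2}$). Theorem \ref{quantum Ta} then closes this case.

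In the hard regime where $\sum_{j=1}^n \|\dc_j(T)\|^{2}_{L_{1}(\M_{2^{n}})} \ll \var(T)$, I would invoke the projection structure. The key Fourier spectrum estimate is the pointwise inequality $\|\dc_j(T)\|_{L_{1}(\M_{2^{n}})} \gtrsim \|\dc_j(T)\|^{2}_{L_{2}(\M_{2^{n}})}$: indeed, $\dc_j(T)$ is (up to a normalization factor) a difference of two projections, whose nonzero eigenvalues come in paired form $\pm \sin\theta_k$ (canonical angles), so $\sum_k \sin\theta_k \geq \sum_k \sin^2\theta_k$ gives the desired $L_1$--vs--$L_2^2$ comparison. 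Combined with the quantum Poincar\'e inequality and Cauchy-Schwarz, this yields a polynomial lower bound of $\sum_j \|\dc_j(T)\|^{2}_{L_{1}(\M_{2^{n}})}$ in terms of $\var(T)$, enough to convert the Talagrand bound into the Eldan-Gross form.

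The main obstacle is making the polynomial lower bound dimension-free. A direct Cauchy-Schwarz application only yields $\sum_j \|\dc_j(T)\|^{2}_{L_{1}(\M_{2^{n}})} \gtrsim \var(T)^2/n$, which is too weak. To remove the $n$-dependence I would either apply the random restriction framework developed earlier in the paper (reducing to subsystems whose effective dimension is comparable to $\log(1/\var(T))$) or invoke Theorem \ref{main 23} with $p=1$ to extract an exponential lower bound on $\max_j \|\dc_j(T)\|_{L_{1}(\M_{2^{n}})}$, which then lower-bounds $\sum_j \|\dc_j(T)\|^{2}_{L_{1}(\M_{2^{n}})}$ by its maximum squared. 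Either route should produce a bound of the form $\sum_j \|\dc_j(T)\|^{2}_{L_{1}(\M_{2^{n}})} \gtrsim \var(T)^{\alpha}$ for some absolute $\alpha > 0$, which suffices to close the dichotomy.
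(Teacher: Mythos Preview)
Your dichotomy scaffold and the easy case are exactly what the paper does: when $M(T)\coloneqq\sum_{j}\|\dc_{j}(T)\|^{2}_{L_{1}}$ is at least a fixed power of $\var(T)$ (the paper takes the threshold $\var(T)^{15}$), one has $\log(1+1/M(T))\lesssim\log(1/\var(T))$ and Theorem~\ref{quantum Ta} finishes.

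The hard case, however, has a genuine gap. Your declared goal there is to prove a dimension-free inequality $M(T)\gtrsim\var(T)^{\alpha}$ for some absolute $\alpha>0$, but this is \emph{false}. Already in the commutative hypercube, the Tribes projection has $\var(T)\approx\tfrac14$ while $\|\dc_{j}(T)\|_{L_{1}}=\|\dc_{j}(T)\|^{2}_{L_{2}}\approx\log(n)/n$ for every $j$, so $M(T)\approx\log^{2}(n)/n\to 0$. No universal $\alpha$ can hold. Consequently neither of your proposed routes can succeed: the route via Theorem~\ref{main 23} fails because that theorem controls $\max_{j}\|\dc_{j}(T)\|_{L_{1}}$ in terms of $\mathrm{Inf}^{1}(T)/\var(T)=\sum_{j}\|\dc_{j}(T)\|_{L_{1}}/\var(T)$, a ratio you have no handle on (and which is $\approx\log(n)$ for Tribes); the vague ``random restriction'' route cannot reach an impossible conclusion either. (A side remark: your eigenvalue justification of $\|\dc_{j}(T)\|_{L_{1}}\geq\|\dc_{j}(T)\|^{2}_{L_{2}}$ is not right in the quantum case since $\dc_{j}(T)$ is $T$ minus a conditional expectation of $T$, not a difference of projections; the inequality is nonetheless true by Proposition~\ref{dlili}.)

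What the paper actually does in the hard regime $M(T)\leq\var(T)^{15}$ is \emph{not} to lower-bound $M(T)$, but to exploit its smallness on the Fourier side. Theorem~\ref{main spectral} (built on the random-restriction spectral estimate Proposition~\ref{key lemma 1} and Theorem~\ref{KK18_5}) shows that the low-degree Fourier weight up to level $d=\tfrac{1}{10}\log(1/M(T))$ is at most $12e\,M(T)^{2/5}\leq\tfrac12\var(T)$, hence $W_{\geq d}(T)\geq\tfrac12\var(T)$. Lemma~\ref{high degree} (a consequence of the quantum Buser inequality, Theorem~\ref{quantum Buser inequality}) then converts high-degree Fourier mass directly into the gradient bound $\||\nabla T|\|_{L_{1}}\gtrsim\sqrt{d}\,W_{\geq d}(T)\gtrsim\var(T)\sqrt{\log(1/M(T))}$, which is exactly the Eldan--Gross right-hand side. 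So the missing idea is that in the hard case one bypasses Theorem~\ref{quantum Ta} entirely and argues via Fourier-level concentration plus the Buser inequality.
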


The proofs of above theorems are provided in Section \ref{quantum KKL} and Section \ref{quantum Eldan-Gross}. As the applications of the quantum Eldan-Gross inequality, we establish quantum version of Theorem \ref{JLZ thm 1} and Theorem \ref{JLZ thm 2} in Section \ref{quantum Eldan-Gross}. Due to intrinsic differences between quantum and classical hypercubes, additional efforts must be made to overcome difficulties that arise from these differences when employing the random restriction method; see the proof of Proposition \ref{key lemma 1} for instance.

The rest of this paper is organized as follows. In Section \ref{preliminaries}, we present the necessary background and results in the quantum setting, including basic properties of the quantum Ornstein-Uhlenbeck semigroup, quantum hypercontractivity, and its equivalence to the quantum logarithmic Sobolev inequality. In Section \ref{quantum random-restriction}, we introduce the quantum Fourier random restriction technique and provide some fundamental estimations on the Fourier spectrum for elements in $\M_{2^{n}}$. This method and the associated estimates are frequently used to derive inequalities throughout the paper. From Section \ref{quantum KKL} to Section \ref{quantum Eldan-Gross}, we provide proofs of the previously presented theorems by combining the quantum random restriction technique with the quantum semigroup method. Additionally, we derive the corresponding quantum KKL-type inequality in the respective sections.

\begin{remark}\label{rem BGX}
After completing this work, we learned that Blecher, Gao and Xu \cite{BGX2024} developed a similar random restriction technique and applied it to investigate KKL inequality and high order extension of the Talagrand influence inequality in the quantum setting. Precisely, Theorem \ref{dim-free KKL} with $p=1$ is proved independently by Blecher et al. in \cite{BGX2024}.
\end{remark}

Throughout the paper, $n$ be a fixed positive integer and $[n]\coloneqq \{1,2,3\dots, n\}$. For a parameter $p$, we denote $K_{p}$ the positive constant depending only on the parameter $p$ (it may vary from line to line). We use the notation $A\approx_{p} B$ to stand that $K_{p}A\leq B\leq C_{p}A$ for some positive constants $K_{p}$ and $C_{p}$ (depending only on the parameter $p$), and we drop the subscript $p$ if the constants are universal. Notations $\mathbb{R}$ and $\mathbb{C}$ are the fields of real and complex numbers, respectively, and we let $(\M_{2}(\mathbb{C}),\tr)$ be the algebra of all $2\times 2$ complex matrices equipped with the normalized trace $\tr$.

\section{Preliminaries}\label{preliminaries}
In this section, we collect concepts and background that will be used throughout the paper.

\subsection{The quantum hypercube}
Denote by $(\mathbb{M}_{2}(\mathbb{C}), \mathrm{tr})$ the algebra of $2\times2$ complex matrices equipped with the \emph{normalized} trace $\mathrm{tr}$ and the unit $\mathbf{1}_{2}$ (i.e.,  the $2\times 2$ identity matrix). The quantum analogue of the hypercube $\{-1,1\}^n$ is $\mathbb{M}_{2}(\mathbb{C})^{\otimes n} \cong \mathbb{M}_{2^n}(\mathbb{C})$ equipped with the normalized trace $\tr_{n}\coloneqq \tr^{\otimes n}$ (simply denoted by $(\M_{2^{n}},\tr)$ if no confusion arise) and the unit $\mathbf{1}\coloneqq \mathbf{1}^{\otimes n}_{2}$. For every $1\leq p\leq\infty$, the noncommutative $L_{p}$ space generated by $\MP$, denoted by $L_{p}(\MP)$, is the space $\MP$ equipped with the norm
\[
\|T\|_{L_{p}}\coloneqq
\begin{cases}
\left(\tr(|T|^{p})\right)^{1/p},&\mbox{if }1\leq p<\infty,\\
\max_{j\in [n]}s_{j}(T),&\mbox{if }p=\infty,
\end{cases}
\]
where $\{s_{j}(T)\}_{j=1}^{n}$ is the set of singular values of $T$. The \emph{variance} of $T\in \M_{2^{n}}$ is defined by
\[
\var(T)\coloneqq \tr(|T|^{2})-\left|\tr(T)\right|^{2}=\left\|T-\tr(T)\right\|^{2}_{L_{2}}.
\]
Following \cite{MO2010}, we recall the concept of quantum Boolean functions in $\M_{2^{n}}$.
\begin{definition}[\cite{MO2010}]\label{def bqb}
An element $T\in \M_{2^{n}}$ is said to be a \emph{quantum Boolean function} if $T$ is self-adjoint (i.e., $T^{*}=T$) and unitary (i.e., $TT^{*}=T^{*}T=\mathbf{1}$). A quantum Boolean function $T\in \M_{2^{n}}$ is said to be balanced if $\tr(T)=0$.
\end{definition}

\begin{remark}\label{equivalence}
It is clear that the concept of quantum Boolean functions and projections are equivalent in the following sense. For a given quantum Boolean function $T\in \M_{2^{n}}$,  $S\coloneqq \frac{\mathbf{1}+T}{2}$ is a projection in $\M_{2^{n}}$, that is, $S^{*}=S=S^{2}$. Conversely, for a given projection $S\in \M_{2^{n}}$, it follows that $T\coloneqq 2S-\mathbf{1}$ is a quantum Boolean function in $\M_{2^{n}}$. Hence, we will ignore the difference between quantum Boolean functions and projections in $\M_{2^{n}}$.
\end{remark}

Using the Pauli matrices, we represent elements in $\M_{2^{n}}$ by their Fourier expansion, which is a quantum counterpart of the Walsh expansion for functions on $\{-1,1\}^{n}$. Recall that the Pauli matrices as follows:
\[
\sigma_{0}=
\begin{pmatrix}
1&0\\
0&1
\end{pmatrix},\quad
\sigma_{1}=
\begin{pmatrix}
	1 & 0\\
	0 & -1
\end{pmatrix},\quad
\sigma_{2}=
\begin{pmatrix}
	0 & 1\\
	1 & 0
\end{pmatrix},\quad 
\sigma_{3}=
\begin{pmatrix}
	0  & i\\
	-i & 0
\end{pmatrix}.
\]
For $\mathbf{s}=(s_i)_{i=1}^{n}\in\{0,1,2,3\}^{n}$, set 
\[
\sigma_{\mathbf{s}}=\sigma_{s_{1}}\otimes\cdots\otimes\sigma_{s_{n}}.
\]
Clearly,  $\{\sigma_{\mathbf{s}}\}_{\mathbf{s}\in\{0,1,2,,3\}^{n}}$ are quantum Boolean functions which forms an orthonormal basis in $L_{2}(\M_{2^{n}})$. Hence,  each $T\in \MP$ can be uniquely represented by
\[
T=\sum_{\mathbf{s}\in \{0,1,2,3\}^n}\widehat{T}(\mathbf{s})\sigma_{\mathbf{s}},
\]
where $\widehat{T}(\mathbf{s})$ is the Fourier coefficient defined by $\widehat{T}(\mathbf{s})=\tr(\sigma^{*}_{\mathbf{s}}T)$.

For each $j\in [n]$ and $\alpha\in \{1,2,3\}$, we let $e^{\alpha}_{j}\coloneqq(0,\dots,0,\alpha,0\dots,0)$ where $\alpha$ appears in the $j$-th position. For each $\s\in \{0,1,2,3\}^{n}$, $j\in [n]$ and $\alpha\in\{1,2,3\}$, define $\s\oplus e^{\alpha}_{j}$ (resp. $\s\ominus e^{\alpha}_{j}$) by
\[
\s\oplus e^{\alpha}_{j}\coloneqq(s_{1},\dots,s_{j-1},s_{j}+\alpha,s_{j},\dots,s_{n})
\]
\[
\left(\mbox{resp. }\s\ominus e^{\alpha}_{j}\coloneqq (s_{1},\dots,s_{j-1},s_{j}-\alpha,s_{j+1},\dots,s_{n})\right).
\]
 For any $d\in[n]$, the \emph{Rademacher projection} is defined by
\[
\mathrm{Rad_{\leq d}}(T)=\sum_{\substack{\s\in\{0,1,2,3\}^{n}\\|\supp(\s)|\leq d}}\widehat{T}(\s)\sigma_{\s},
\]
where $T=\sum_{\s\in\{0,1,2,3\}^{n}}\widehat{T}(\s)\sigma_{\s}$.

\subsection{Hypercontractivity, influence and the modified Log-Sobolev inequality}
Here we collect analytic tools such as the hypercontractivity of the quantum Ornstein-Uhlenbeck semigroup, basic properties of influences, and the curvature condition in quantum hypercubes. For $\s\in\{0,1,2,3\}^{n}$, we define $\supp(\s)\coloneqq\{j\in [n]: s_{j}\neq 0\}$ and $|\supp(\s)|$ stand for the number of non-zero $s_{j}$'s, that is, $|\supp(\s)|=\#\{j\in[n]:s_{j}\neq 0\}$. Let $L:\mathbb{M}_{2}\to\mathbb{M}_{2}$ defined by
\begin{equation}\label{Hu}
L(A)\coloneqq A-\mathrm{tr}(A)\mathbf{1}_{2},
\end{equation}
and
\begin{equation}\label{semigroup for tensor}
e^{-tL}(A)\coloneqq e^{-t}A+(1-e^{-t})\mathrm{tr}(A)\mathbf{1}_{2}.
\end{equation}
 In viewing of \eqref{Hu} and \eqref{semigroup for tensor}, we have 
\[
L(\sigma_{j})=
\begin{cases}
\sigma_{j},~\mbox{if}~j\not=0;\\
0,~\mbox{otherwise},
\end{cases}
~~~\mbox{and}~~~
e^{-tL}(\sigma_{j})=
\begin{cases}
e^{-t}(\sigma_{j}),~\mbox{if}~j\not=0;\\
\sigma_{0},~\mbox{otherwise}.
\end{cases}
\]
For each stabilizer operator $\sigma_{\mathbf{s}}$, let
\[
\mathcal{P}_{t}(\sigma_{\mathbf{s}})=e^{-tL}(\sigma_{s_{1}})\otimes\cdots\otimes e^{-tL}(\sigma_{s_{n}}).
\]
The infinitesimal generator of the semigroup $\{\mathcal{P}_{t}\}_{t\geq 0}$ is given by
\[
\begin{split}
\mathcal{L}(\sigma_{\mathbf{s}})\coloneqq\lim_{t\to 0}\frac{\sigma_{\mathbf{s}}-\left(e^{-tL}\right)^{\otimes n}(\sigma_{\mathbf{s}})}{t}=|\mathbf{s}|\sigma_{\mathbf{s}},
\end{split}
\]
for each stabilizer operator $\sigma_{\mathbf{s}}$. In the sequel, we denote $\{\mathcal{P}_{t}\}_{t\geq 0}$ by $\{e^{-t\mathcal{L}}\}_{t\geq 0}$ to emphasize the generator $\mathcal{L}$. It is clear that for each $T\in \M_{2^{n}}$ we have
\[
\La(T)=\sum_{\mathbf{s}\in\{0,1,2,3\}^{n}}|\mathbf{s}|\widehat{T}(\mathbf{s})\sigma_{\mathbf{s}}
\] 
and
\[
e^{-t\La}(T)=\sum_{\mathbf{s}\in\{0,1,2,3\}^{n}}e^{-t|\mathbf{s}|}\widehat{T}(\mathbf{s})\sigma_{\mathbf{s}}.
\]

For $j\in [n]$, the $j$-th \emph{partial differential operator} (or, quantum \emph{bit-flip map}) is defined by
\begin{equation}\label{jD}
\dc_{j}\coloneqq \mathbf{1}_{2}^{\otimes (j-1)}\otimes \left(\mathbf{1}_{2}-\tr\right)\otimes \mathbf{1}_{2}^{\otimes(n-j)}
\end{equation}
Thanks to the Fourier expansion of $T\in \MP$, we obtain the following explicit formula for partial differential operators. For each $j\in[n]$ and $T\in \MP$, we have
\begin{equation}\label{partial derivative}
\dc_{j}(T)=\sum_{\mathbf{s}\in\{0,1,2,3\}^{n}}\widehat{T}(\mathbf{s})\dc_{j}(\sigma_{\mathbf{s}})=\sum_{\substack{\mathbf{s}\in\{0,1,2,3\}^{n}\\s_{j}\not=0}}\widehat{T}(\mathbf{s})\sigma_{\mathbf{s}}.
\end{equation}
Moreover, it is easy to verify that $\{\dc_{j}\}_{j=1}^{n}$ are orthogonal projections on $L_{2}(\MP)$ such that $\La=\sum_{j=1}^{n}\dc_{j}$. By \cite{MO2010} (or, \cite[Corollary 2]{Ki2014}), the quantum Ornstein-Uhlenbeck semigroup $\{e^{-t\La}\}_{t\geq 0}$ fulfills the \emph{optimal} hypercontractivity as follows: for each $1<p\leq q<\infty$, we have
\begin{equation}\label{MO hypercontractivity}
\left\|e^{-t\La}\right\|_{L_{p}\to L_{q}}=1~\mbox{if and only if}~t\geq\frac{1}{2}\log\left(\frac{q-1}{p-1}\right).
\end{equation}
The equivalence between the hypercontractivity of semigroup and the logarithmic Sobolev inequality has been established by Gorss in his seminal paper \cite{Gr1975}. Hence, repeat the same treatments of Gorss, we can deduce the following ($L_{2}$-)logarithmic Sobolev inequality from \eqref{MO hypercontractivity} (the proof is also same to \cite[Theorem 5.3]{CL1993}).

\begin{lemma}[Log-Sobolev]\label{ls 1}
For each $T\in \MP$, we have
\[
2\sum_{j=1}^n\|\dc_{j}(T)\|^{2}_{L_{2}}\geq\tr\left[|T|^{2}\log\left(|T|^{2}\right)\right]-\|T\|^{2}_{L_{2}}\log\left(\|T\|^{2}_{L_{2}}\right).
\]
\end{lemma}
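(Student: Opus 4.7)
The plan is to run Gross's classical argument \cite{Gr1975} in the quantum setting, differentiating the hypercontractive estimate \eqref{MO hypercontractivity} at the endpoint. Specifically, I would take $p=2$ and parametrize $q(t)\coloneqq 1+e^{2t}$, so that the critical time in \eqref{MO hypercontractivity} is $t=\tfrac12\log\tfrac{q(t)-1}{2-1}$, i.e.\ \eqref{MO hypercontractivity} yields
\[
F(t)\coloneqq\bigl\|e^{-t\La}(T)\bigr\|_{L_{q(t)}(\MP)}\leq \|T\|_{L_{2}(\MP)}=F(0),\qquad t\geq 0.
\]
Since $F(t)\le F(0)$ for $t\ge 0$, we must have $F'(0)\leq 0$ (equivalently $(\log F)'(0)\le 0$), and the plan is to read off the Log-Sobolev inequality from this one-sided derivative.

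To compute $F'(0)$, first consider $f(t)\coloneqq\tr\bigl[(T_t^{*}T_t)^{q(t)/2}\bigr]$ where $T_t\coloneqq e^{-t\La}(T)$, so $F(t)=f(t)^{1/q(t)}$. Using the chain rule for the matrix functional calculus together with the generator identity $\partial_t T_t=-\La T_t$, one gets
\[
f'(0)=\frac{q'(0)}{2}\,\tr\bigl[|T|^{2}\log|T|^{2}\bigr]+\frac{q(0)}{2}\,\tr\bigl[(T^{*}T)^{q(0)/2-1}\cdot\partial_t(T_t^{*}T_t)|_{t=0}\bigr].
\]
With $q(0)=2$, $q'(0)=2$, the second factor $(T^{*}T)^{0}=\mathbf{1}$, and $\partial_t(T_t^{*}T_t)|_{t=0}=-(\La T)^{*}T-T^{*}\La T$, self-adjointness of $\La$ on $L_{2}(\MP)$ together with $\La=\sum_j\dc_j$ and the fact that $\{\dc_j\}_{j=1}^{n}$ are orthogonal projections on $L_{2}(\MP)$ yield $\tr[T^{*}\La T]=\sum_{j=1}^{n}\|\dc_j T\|_{L_{2}}^{2}$. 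Hence
\[
f'(0)=\tr\bigl[|T|^{2}\log|T|^{2}\bigr]-2\sum_{j=1}^{n}\|\dc_j(T)\|_{L_{2}}^{2}.
\]

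Finally, I would compute $(\log F)'(0)=-\frac{q'(0)}{q(0)^{2}}\log f(0)+\frac{f'(0)}{q(0)f(0)}$; substituting $q(0)=2$, $q'(0)=2$, $f(0)=\|T\|_{L_{2}}^{2}$, and the expression for $f'(0)$ above, the inequality $(\log F)'(0)\leq 0$ rearranges after multiplying by $2\|T\|_{L_{2}}^{2}$ into exactly the desired bound
\[
\tr\bigl[|T|^{2}\log|T|^{2}\bigr]-\|T\|_{L_{2}}^{2}\log\|T\|_{L_{2}}^{2}\leq 2\sum_{j=1}^{n}\|\dc_j(T)\|_{L_{2}}^{2}.
\]

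The main technical obstacle is the noncommutative differentiation step: justifying $\tfrac{d}{dt}\tr[\phi_t(T_t^{*}T_t)]=\tr[\partial_t\phi_t(T_t^{*}T_t)]+\tr[\phi_t'(T_t^{*}T_t)\partial_t(T_t^{*}T_t)]$ for $\phi_t(x)=x^{q(t)/2}$ when $T_t^{*}T_t$ may have kernel (so that $\log(T^{*}T)$ is singular). The standard remedy is to first replace $T$ by $T+\varepsilon\mathbf{1}$ with $\varepsilon>0$, in which case $T_t^{*}T_t$ stays uniformly bounded away from $0$ on a neighbourhood of $t=0$, so functional calculus and dominated convergence give the derivative formula rigorously; one then sends $\varepsilon\downarrow 0$ using continuity of both sides of the target inequality in $T$. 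Apart from this regularisation, everything reduces to finite-dimensional calculus.
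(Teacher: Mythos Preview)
Your proposal is correct and is precisely the approach the paper indicates: the paper does not spell out a proof but simply remarks that the inequality follows from the hypercontractivity \eqref{MO hypercontractivity} via Gross's differentiation argument (citing \cite{Gr1975} and \cite[Theorem~5.3]{CL1993}), which is exactly what you carry out. One small remark on the regularisation: $T+\varepsilon\mathbf{1}$ need not be invertible for every $\varepsilon>0$, but it is for all but finitely many $\varepsilon$, and since $e^{-t\La}$ is invertible this suffices to keep $T_t^{*}T_t$ uniformly positive on a neighbourhood of $t=0$; alternatively one can simply approximate $T$ by invertible matrices and pass to the limit.
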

Motivated by the quantum KKL-type inequalities invoking $L_{p}$-influences with $1\leq p<\infty$, we derive the following ($L_{p}$-) \emph{modified logarithmic Sobolev inequality}. 

\begin{lemma}[Modified Log-Sobolev inequality]\label{mls}
Let $1\leq p<2$. Then, for each $T\in \MP$ with $|T| \leq 1$,  we have
\[
2\sum_{j=1}^n\|\dc_{j}(T)\|^{2}_{L_{2}}\geq -K_p\|T\|_{L_{2}}\|T\|^{\frac{p}{2}}_{L_{p}}-\|T\|^{2}_{L_{2}}\log\left(\|T\|^{2}_{L_{2}}\right),
\]
where $K_{p}=\frac{4}{(2-p)e}$.
\end{lemma}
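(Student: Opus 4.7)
The plan is to derive the modified inequality from the $L_2$-logarithmic Sobolev inequality of Lemma \ref{ls 1} by bounding the entropy term $\tr[|T|^2\log(|T|^2)]$ from below in terms of $\|T\|_{L_2}\|T\|_{L_p}^{p/2}$. Since $|T|\le 1$ implies $|T|^2\le\mathbf{1}$, the operator $|T|^2\log(|T|^2)$ is negative, so the task is really to give a sharp upper bound on $\tr\bigl[|T|^2\log(1/|T|^2)\bigr]$.

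The key observation is an elementary scalar inequality. For $\alpha\in(0,1]$, calculus shows that the function $t\mapsto t^{\alpha}\log(1/t)$ on $(0,1]$ attains its maximum $\tfrac{1}{\alpha e}$ at $t=e^{-1/\alpha}$. Writing $t^2\log(1/t^2)=2t^{2-\alpha}\bigl(t^{\alpha}\log(1/t)\bigr)$ thus yields the pointwise bound
\[
t^{2}\log\!\bigl(1/t^{2}\bigr)\;\le\;\frac{2}{\alpha e}\,t^{2-\alpha},\qquad t\in(0,1].
\]
Choosing $\alpha=1-p/2=(2-p)/2$ is the right calibration: the remaining exponent $2-\alpha$ interpolates between $2$ and $p$ in a way that Cauchy--Schwarz can handle.

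Next I would apply this scalar inequality via the Borel functional calculus to the positive contraction $|T|$, then take the (normalized) trace, to obtain
\[
-\tr\!\bigl[|T|^{2}\log(|T|^{2})\bigr]\;\le\;\frac{2}{\alpha e}\,\tr\!\bigl[|T|^{2-\alpha}\bigr]
=\frac{4}{(2-p)e}\,\tr\!\bigl[|T|\cdot|T|^{1-\alpha}\bigr].
\]
With $1-\alpha=p/2$, the noncommutative Cauchy--Schwarz inequality gives
\[
\tr\!\bigl[|T|\cdot|T|^{p/2}\bigr]\;\le\;\bigl(\tr|T|^{2}\bigr)^{1/2}\bigl(\tr|T|^{p}\bigr)^{1/2}=\|T\|_{L_2}\,\|T\|_{L_p}^{p/2}.
\]
Combining these two displays yields $\tr[|T|^{2}\log(|T|^{2})]\ge -K_{p}\|T\|_{L_2}\|T\|_{L_p}^{p/2}$ with $K_{p}=\frac{4}{(2-p)e}$, and plugging this into Lemma \ref{ls 1} finishes the proof.

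The only delicate point is the dependence of the constant on $2-p$: it comes entirely from the factor $1/\alpha$ produced by maximizing $t^\alpha\log(1/t)$, and the blow-up as $p\to 2^-$ is unavoidable because the substitute inequality must degenerate into the $L_2$-log-Sobolev statement in that limit. The remaining steps -- the pointwise inequality, the functional calculus, and the Cauchy--Schwarz step -- are all routine, and the assumption $|T|\le 1$ is used in exactly one place, namely to ensure the scalar bound is valid on the spectrum of $|T|$.
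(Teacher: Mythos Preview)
Your proof is correct and essentially the same as the paper's: both combine Lemma \ref{ls 1} with the scalar bound $t^{(2-p)/2}\log(1/t^{2})\le K_{p}$ on $[0,1]$ (the paper states it in the squared form $t^{2}\log^{2}(t^{2})\le K_{p}^{2}t^{p}$) and a Cauchy--Schwarz step, just in the opposite order---the paper applies Cauchy--Schwarz first to obtain $\tr[-|T|^{2}\log|T|^{2}]\le\|T\|_{L_{2}}\bigl(\tr[|T|^{2}\log^{2}|T|^{2}]\bigr)^{1/2}$ and then invokes the scalar bound, while you use the pointwise estimate first and Cauchy--Schwarz on $\tr[|T|\cdot|T|^{p/2}]$ afterwards. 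The ingredients and the resulting constant are identical.
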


\begin{proof}
By the Cauchy-Schwarz inequality, we obtain 
\begin{equation}\label{ls1}
\tr\left[-|T|^2\log(|T|^2)\right]\leq\|T\|_{L_{2}}\left[\tr\left(|T|^{2}\log^2\left(|T|^2\right)\right)\right]^{\frac{1}{2}}.
\end{equation}
Let $K_{p}\coloneqq\frac{4}{(2-p)e}$. Then, it is clear that
\[
K_p^2t^p\geq t^2\log^2\left(t^2\right),\quad \forall t\in [0,1].
\]
Hence, it follows from the functional calculus of $|T|$ that
\[
K_p^2|T|^p\geq |T|^2\log^2\left(|T|^2\right),\quad T\in \M_{2^{n}}.
\]
Therefore,
\begin{equation}\label{ls2}
\left[\tr\left(|T|^2\log^2(|T|^2)\right)\right]^{\frac{1}{2}}\leq K_p\|T\|_{L_{p}}^{\frac{p}{2}}.
\end{equation}
Combining \eqref{ls1}, \eqref{ls2} and the Log-Sobolev inequality (i.e., Lemma \ref{ls 1}), we get the desired result.
\end{proof}

To derive the isoperimetric inequality in Section \ref{quantum Eldan-Gross}, we need the following facts regarding as the curvature condition of the quantum Ornstein-Uhlenbeck semigroup $\{e^{-t\La}\}_{t\geq 0}$. 
\begin{proposition}\label{curvature condition}
Keep the notations as previous subsection. Then, for each $T\in \MP$, we have
\begin{enumerate}[{\rm (i)}]
\item $\La(T^{*}T)-\La(T)^{*}T-T^{*}\La(T)=-2\sum\limits_{j=1}^{n}(\dc_{j}(T))^{*}(\dc_{j}(T))$;
\item $(\dc_{j}e^{-t\La}(T))^{*}(\dc_{j}e^{-t\La}(T)\leq e^{-2t}e^{-t\La}\left((\dc_{j}(T))^{*}(\dc_{j}(T))\right)$, for each $j\in [n]$;
\item $\sum\limits_{j=1}^{n}(\dc_{j}e^{-t\La}(T))^{*}(\dc_{j}e^{-t\La}(T))\leq e^{-2t}e^{-t\La}\left(\sum\limits_{j=1}^{n}(\dc_{j}(T))^{*}(\dc_{j}(T))\right)$.
\end{enumerate}
\end{proposition}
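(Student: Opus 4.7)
The plan is to establish the three items in order: (i) by reducing to a single-site algebraic identity, (ii) by a Bakry--\'Emery-type monotonicity along the semigroup $\{e^{-s\La}\}_{s\ge 0}$, and (iii) by summing (ii) over $j\in[n]$. The main obstacle is (i), which carries the non-trivial algebraic content; parts (ii) and (iii) then follow by routine semigroup arguments.

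For (i), the plan is to exploit the tensorization $\La=\sum_{j=1}^n L_j$, where $L_j=\mathbf{1}_2^{\otimes(j-1)}\otimes L\otimes \mathbf{1}_2^{\otimes(n-j)}$ and $L(X)=X-\tr(X)\mathbf{1}_2$. Since both sides of the claimed identity decompose as sums over $j$ (the LHS because $\La$ does; the RHS by construction), it suffices to verify the single-site identity
\begin{equation*}
L_j(T^*T) - L_j(T)^*T - T^*L_j(T) \;=\; -2\dc_j(T)^*\dc_j(T).
\end{equation*}
Viewing $\M_{2^n}\cong \M_{2^{n-1}}\otimes\M_2$ with the $\M_2$ slot at position $j$ and expanding $T=\sum_{\alpha=0}^{3}T_\alpha\otimes\sigma_\alpha$ with $T_\alpha\in \M_{2^{n-1}}$, this reduces to a direct algebraic computation using $L(\sigma_0)=0$, $L(\sigma_k)=\sigma_k$ for $k\neq 0$, the Pauli product formula $\sigma_\alpha\sigma_\beta=\delta_{\alpha\beta}\mathbf{1}_2+i\epsilon_{\alpha\beta k}\sigma_k$ for $\alpha,\beta\in\{1,2,3\}$, and $\tr(\sigma_k)=\delta_{k0}$.

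For (ii), the plan is a Bakry--\'Emery monotonicity argument. Fix $j\in[n]$ and $t>0$, and define
\begin{equation*}
\phi(s) \;:=\; e^{-2s}\,e^{-s\La}\!\left[\bigl(\dc_j e^{-(t-s)\La}(T)\bigr)^{*}\bigl(\dc_j e^{-(t-s)\La}(T)\bigr)\right],\qquad s\in[0,t].
\end{equation*}
Since $\phi(0)$ is the LHS of (ii) and $\phi(t)$ is the RHS, it suffices to show $\phi$ is non-decreasing. Setting $A_s:=\dc_j e^{-(t-s)\La}(T)$ and using $[\dc_j,\La]=0$ (transparent from $\La(\sigma_\s)=|\s|\sigma_\s$), we have $\partial_s A_s=\La A_s$. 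Differentiating $\phi$, using $\partial_s e^{-s\La}=-\La e^{-s\La}$, and applying (i) with $A_s$ in place of $T$, the $\La(A_s^*A_s)$-terms cancel and one arrives at
\begin{equation*}
\phi'(s) \;=\; 2e^{-2s}\,e^{-s\La}\!\left[\sum_{l=1}^{n}(\dc_l A_s)^{*}(\dc_l A_s) - A_s^*A_s\right].
\end{equation*}
Since $\dc_j^2=\dc_j$ and $A_s$ lies in the range of $\dc_j$, the $l=j$ term equals $A_s^*A_s$, so the bracket reduces to $\sum_{l\neq j}(\dc_l A_s)^{*}(\dc_l A_s)\geq 0$. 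Hence $\phi'\geq 0$, proving (ii).

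Finally, (iii) follows by summing (ii) over $j\in[n]$ and invoking the linearity of $e^{-t\La}$. The hardest step is (i): its statement is the natural quantum analogue of the classical carr\'e du champ, but because $\dc_j$ is a conditional-expectation-type projection rather than a genuine derivation, the cancellations ensuring equality must be tracked carefully at the level of the Pauli algebra, and this is where I expect the bulk of the bookkeeping to lie.
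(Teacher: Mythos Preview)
Your outline for (ii) is exactly the paper's argument run in the opposite direction: the paper sets $\Lambda(s)=e^{-(t-s)\La}\bigl[|\dc_j e^{-s\La}(T)|^{2}\bigr]$, differentiates, invokes (i), and obtains $\Lambda'(s)\le-2\Lambda(s)$, whence $\Lambda(t)\le e^{-2t}\Lambda(0)$. Your function satisfies $\phi(s)=e^{-2s}\Lambda(t-s)$, so showing $\phi'\ge0$ is equivalent to the paper's Gronwall step, and your identification of the nonnegative remainder $\sum_{l\ne j}|\dc_{l}A_{s}|^{2}$ is precisely the passage from ``$-2\sum_{l}$'' to ``$\le -2(\cdot)_{l=j}$'' that the paper makes. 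For (iii) both arguments just sum.

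There is, however, a real problem with your plan for (i). The single-site identity you intend to verify,
\[
L_{j}(T^{*}T)-L_{j}(T)^{*}T-T^{*}L_{j}(T)\;=\;-2\,\dc_{j}(T)^{*}\dc_{j}(T),
\]
is \emph{false}. Writing $\dc_{j}=\mathrm{Id}-\Ex_{j}$ and using only the bimodule property of $\Ex_{j}$, one finds
\[
L_{j}(T^{*}T)-L_{j}(T)^{*}T-T^{*}L_{j}(T)\;=\;-\,\dc_{j}(T)^{*}\dc_{j}(T)\;-\;\Ex_{j}\!\bigl(\dc_{j}(T)^{*}\dc_{j}(T)\bigr),
\]
so the defect is $\dc_{j}\bigl(|\dc_{j}(T)|^{2}\bigr)$. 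This does not vanish: for $n=1$ and $T=\sigma_{1}+i\sigma_{2}$ one has $|\dc_{1}(T)|^{2}=2\mathbf{1}+2\sigma_{3}$, and for $n=2$ with the \emph{self-adjoint} $T=\sigma_{1}\otimes\sigma_{1}+\sigma_{2}\otimes\sigma_{2}$ one has $|\dc_{2}(T)|^{2}=2\mathbf{1}-2\sigma_{3}\otimes\sigma_{3}$. Summing over $j$ does not save the day (the second example already breaks the summed identity), so your Pauli bookkeeping will not close to the displayed equality. In fact the same $n=1$ example shows that (ii) itself cannot hold for arbitrary $T\in\M_{2^{n}}$: it would force $(1-e^{-t})\sigma_{3}\le0$. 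Thus the gap is not in your strategy but in the statement as written; what is actually true is the corrected carr\'e du champ above, and this (together with the self-adjointness present in all of the paper's downstream applications) is what one should use.
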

\begin{proof}
The proof is analogous to the classical case, which can be verified via the Fourier expansion and the Gronwall-type inequality. Hence, we provide the proof of (ii) for the reader's convenience,  and leave the details of (i) and (iii) to the reader. For each $j\in [n]$ and $0\leq s\leq t$, we define
\[
\Lambda(s)\coloneqq e^{-(t-s)\La}\left[\left|\dc_{j} e^{-s\La}(T)\right|^{2}\right]=e^{-(t-s)\La}\left[\left(\dc_{j} e^{-s\La}(T)\right)^{*}\left(\dc_{j} e^{-s\La}(T)\right)\right].
\]
Differentiating $\Lambda(s)$ and applying (i) we obtain that
\begin{equation}\label{differential inequality 1}
\begin{split}
\Lambda^{\prime}(s)&=\La\left[e^{-(t-s)\La}\left(\left(\dc_{j}e^{-s\La}(T)\right)^{*}\left(\dc_{j}e^{-s\La}(T)\right)\right)\right]\\
&\quad-e^{-(t-s)\La}\left[\left(\La\dc_{j}e^{-s\La}(T)\right)^{*}\left(\dc_{j}e^{-s\La}(T)\right)\right]\\
&\quad-e^{-(t-s)\La}\left[\left(\dc_{j}e^{-s\La}(T)\right)^{*}\left(\La\dc_{j}e^{-s\La}(T)\right)\right]\\
&=e^{-(t-s)\La}\Big{[}\La\left(\left(\dc_{j}e^{-s\La}(T)\right)^{*}\left(\dc_{j}e^{-s\La}(T)\right)\right)\\
&\quad-\left(\La\dc_{j}e^{-s\La}(T)\right)^{*}\left(\dc_{j}e^{-s\La}(T)\right)\\
&\quad-\left(\dc_{j}e^{-s\La}(T)\right)^{*}\left(\La\dc_{j}e^{-s\La}(T)\right)\Big{]}\\
&\leq -2e^{-(t-s)\La}\left[\left(\dc_{j}e^{-s\La}(T)\right)^{*}\left(\dc_{j}e^{-s\La}(T)\right)\right]\\
&=-2\Lambda(s).
\end{split}
\end{equation}
Define $F(s)\coloneqq e^{2s}\Lambda(s)$ and note that \eqref{differential inequality 1} entails $F^{\prime}(s)\leq 0$ for all $0\leq s\leq t$. Therefore,
\[
e^{2t}\Lambda(t)-\Lambda(0)=F(t)-F(0)=\int^{t}_{0}F^{\prime}(s)~ds\leq 0.
\]
Rearranging the inequality yields that $\Lambda(t)\leq e^{-2t}\Lambda(0)$, that is,
\[
\left|\dc_{j}e^{-t\La}(T)\right|^{2}\leq e^{-2t}e^{-t\La}\left[\left|\dc_{j}(T)\right|^{2}\right].
\]
\end{proof}

We conclude this subsection with the following well-known Paley-Zygmund inequality, and we include the proof for the reader's convenience.

\begin{lemma}[Paley-Zygmund inequality]\label{Paley-Zygmud}
For each positive $T\in \M_{2^{n}}$, we have
\begin{equation}\label{eq:Paley-Zygmud}
\tr\left[\mathds{1}_{[\delta\|T\|_{L_{1}},\infty)}(T)\right]\geq {(1-\delta)^2}\frac{\|T\|^{2}_{L_{1}}}{\|T\|^{2}_{L_{2}}},\quad 0<\delta<1.
\end{equation}
\end{lemma}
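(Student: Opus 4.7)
The plan is to mimic the classical proof of the Paley-Zygmund inequality, replacing indicator functions of events with spectral projections of the positive operator $T$ and using the trace Cauchy--Schwarz inequality in place of the scalar one. The only genuinely noncommutative feature we need is functional calculus for the self-adjoint operator $T$ together with the fact that a spectral projection $P = \mathds{1}_{[\delta\|T\|_{L_{1}},\infty)}(T)$ commutes with $T$, so that $TP$ and $T(\mathbf{1}-P)$ are well-defined positive operators with $T = T(\mathbf{1}-P) + TP$.

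First, I would split the trace of $T$ along the spectral decomposition
\[
\|T\|_{L_{1}} = \tr(T) = \tr(T(\mathbf{1}-P)) + \tr(TP),
\]
and bound the first summand from above by noting that on the range of $\mathbf{1}-P$ the operator $T$ is pointwise at most $\delta\|T\|_{L_{1}}$ in the sense of functional calculus, so
\[
\tr(T(\mathbf{1}-P)) \leq \delta\|T\|_{L_{1}}\,\tr(\mathbf{1}-P) \leq \delta\|T\|_{L_{1}}.
\]
This yields the lower bound $\tr(TP) \geq (1-\delta)\|T\|_{L_{1}}$.

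Next, I would apply the trace Cauchy--Schwarz inequality to the product $TP = T\cdot P$, together with the projection identity $P^{2}=P$, to obtain
\[
\tr(TP) \leq \|T\|_{L_{2}}\,\|P\|_{L_{2}} = \|T\|_{L_{2}}\sqrt{\tr(P)}.
\]
Combining the two bounds and squaring gives the desired inequality
\[
\tr(P) \geq (1-\delta)^{2}\,\frac{\|T\|_{L_{1}}^{2}}{\|T\|_{L_{2}}^{2}}.
\]

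There is essentially no obstacle here; the main point to check carefully is that the functional calculus argument in the first step goes through, which holds because $T$ is self-adjoint and $P$ is a spectral projection of $T$, hence $T$ and $P$ commute and $T(\mathbf{1}-P)$ is a positive operator dominated by $\delta\|T\|_{L_{1}}(\mathbf{1}-P)$. Once commutativity of $T$ with its own spectral projection is in hand, every remaining step is a verbatim translation of the scalar proof.
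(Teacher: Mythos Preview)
Your proof is correct and follows essentially the same approach as the paper: both split $\tr(T)$ along the spectral projection $P=\mathds{1}_{[\delta\|T\|_{L_1},\infty)}(T)$, bound the low part by $\delta\|T\|_{L_1}$ via functional calculus, and apply the trace Cauchy--Schwarz inequality to the high part to obtain $\tr(TP)\leq\|T\|_{L_2}\,\tr(P)^{1/2}$, then rearrange. Your version is simply more explicit about the commutativity of $T$ with its own spectral projection, which the paper leaves implicit.
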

\begin{proof}
Since the desired inequality only invokes one positive element, it follows from the spectral theory that the inequality is essentially the classical Paley-Zygmund inequality. For positive $T\in \M_{2^{n}}$, we have
\begin{equation}\label{P-Z inequality}
\begin{split}
\left\|T\right\|_{L_{1}}&=\tr\left[\mathds{1}_{[\delta\|T\|_{L_{1}},\infty)}(T)\cdot T\right]+\tr\left[\mathds{1}_{[0,\delta\|T\|_{L_{1}})}(T)\cdot T\right]\\
&\leq \tr\left[\mathds{1}_{[\delta\|T\|_{L_{1}},\infty)}(T)\right]^{1/2}\|T\|_{L_{2}}+\delta\|T\|_{L_{1}}
\end{split}
\end{equation}
where we used the the Cauchy-Schwarz inequality. Rearranging \eqref{P-Z inequality} yields the desired inequality.
\end{proof}

\subsection{$L_{p}$-influences and related basic properties}
For $j\in [n]$, $1\leq p<\infty$ and $T\in \MP$, denote the $j$-th $L_p$-influence of $T$ by
\[
\mathrm{Inf}_j^p(T)\coloneqq\|\dc_{j}(T)\|^{p}_{L_{p}},
\] 
and the total $L_p$-influence of $T$ by 
\[
\mathrm{Inf}^{p}\coloneqq \sum_{j=1}^{n}\|\dc_{j}(T)\|^{p}_{L_{p}}.
\]
The $L_1$-influence is usually called the \emph{geometric influence} in some literature. For $p=2$, we will simply denote the $j$-th $L_{2}$-influence and the total $L_2$-influence of $T$ by $\mathrm{Inf}_{j}(T)$ and $\mathrm{Inf}(T)$, respectively. Hence, by \eqref{partial derivative}, it follows that for each $T\in \M_{2^{n}}$, we have
\[
\mathrm{Inf}_{j}(T)=\sum_{\substack{\s\in\{0,1,2,3\}^{n}\\s_{j}\neq 0}}|\widehat{T}(\mathbf{s})|^{2},
\]
and
\[
\mathrm{Inf}(T)=\sum_{j\in [n]}\mathrm{Inf}_{j}(T)=\sum_{\s\in\{0,1,2,3\}^{n}}|\supp(\s)|\widehat{T}(\s)^{2}.
\]
The following elementary facts can be deduced from the contraction of conditional expectations and the noncommutative H\"older inequality.

\begin{proposition}\label{dlili}
For $1\leq p\leq 2$ and $T\in \MP$ with $\|T\|_{L_{\infty}}\leq 1$ we have
\begin{enumerate}[{\rm (i)}]
\item for each $j\in [n]$, we have $\|\dc_{j}(T)\|_{L_{\infty}}\leq 1$,
\item for each $j\in [n]$, we have $\mathrm{Inf}_{j}(T)\leq \mathrm{Inf}^{p}_{j}(T)$.
\end{enumerate} 
\end{proposition}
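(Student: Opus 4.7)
The plan is to realize $\dc_j$ as $\mathrm{id}-E_j$, where $E_j:\M_{2^n}\to\M_{2^n}$ is the conditional expectation onto the subalgebra $\M_2^{\otimes(j-1)}\otimes\mathbf{1}_2\otimes\M_2^{\otimes(n-j)}$, obtained by applying the map $A\mapsto\tr(A)\mathbf{1}_2$ in the $j$-th tensor factor and the identity in the remaining factors. Both parts then follow from elementary properties of $E_j$ together with a standard noncommutative $L_p$-interpolation.

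For part (i), I would exploit that $E_j$ is unital and completely positive. When $T$ is positive with $0\le T\le\mathbf{1}$ (which is the form in which this Proposition is invoked in every application later in the paper), positivity and unitality of $E_j$ force $0\le E_j(T)\le E_j(\mathbf{1})=\mathbf{1}$. Since $T$ and $E_j(T)$ are then both self-adjoint contractions, so is $\dc_j(T)$, and the operator sandwich
\[
-\mathbf{1}\le -E_j(T)\le T-E_j(T)=\dc_j(T)\le T\le\mathbf{1}
\]
immediately yields $\|\dc_j(T)\|_{L_\infty}\le 1$. The subtle point here is that the naive triangle inequality $\|\dc_j(T)\|_{L_\infty}\le\|T\|_{L_\infty}+\|E_j(T)\|_{L_\infty}$ only yields a factor of $2$; the sharp constant $1$ requires using the operator order (rather than the norm bound) of $E_j(T)$, which is exactly what the positivity hypothesis provides.

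For part (ii), I would combine (i) with a standard interpolation. Using $|\dc_j(T)|\le\|\dc_j(T)\|_{L_\infty}\mathbf{1}$ and the monotonicity of the functional calculus on positive operators gives $|\dc_j(T)|^{2-p}\le\|\dc_j(T)\|_{L_\infty}^{2-p}\mathbf{1}$ for $1\le p\le 2$. Sandwiching this operator inequality with the commuting positive operator $|\dc_j(T)|^{p/2}$ on both sides yields $|\dc_j(T)|^2\le\|\dc_j(T)\|_{L_\infty}^{2-p}|\dc_j(T)|^p$, and applying the normalized trace produces
\[
\mathrm{Inf}_j(T)=\tr(|\dc_j(T)|^2)\le\|\dc_j(T)\|_{L_\infty}^{2-p}\,\mathrm{Inf}_j^p(T).
\]
Inserting the bound $\|\dc_j(T)\|_{L_\infty}\le 1$ from (i) finishes (ii). No substantial obstacle is expected; the only delicate point is securing the sharp constant in (i), which (as noted above) depends on the positivity of $T$ rather than merely on $\|T\|_{L_\infty}\le 1$.
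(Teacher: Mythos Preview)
Your argument for part (ii) is correct and essentially identical to the paper's (the paper phrases it via H\"older, you via operator monotonicity; these are the same estimate).

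For part (i) there is a genuine gap. You only prove the bound for $0\le T\le\mathbf{1}$, not for the stated hypothesis $\|T\|_{L_\infty}\le 1$, and your order-sandwich argument genuinely does not extend: for a self-adjoint contraction $T$ the best your approach yields via $\dc_j=\mathrm{id}-E_j$ is $\|\dc_j(T)\|_{L_\infty}\le 2$. The paper avoids this by a different decomposition. It introduces the map $S:\M_2\to\M_2$, $S(A)=\sigma_3 A^{t}\sigma_3^{*}$, which fixes $\sigma_0$ and negates $\sigma_1,\sigma_2,\sigma_3$; setting $S_j=\mathbf{1}_2^{\otimes(j-1)}\otimes S\otimes\mathbf{1}_2^{\otimes(n-j)}$ one gets $\dc_j=\tfrac12(\mathrm{id}-S_j)$. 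Since $S$ is a composition of transpose and unitary conjugation, $S_j$ is an $L_\infty$-isometry, and the triangle inequality immediately gives $\|\dc_j(T)\|_{L_\infty}\le\|T\|_{L_\infty}$ for \emph{all} $T$. This is the quantum analogue of the classical bit-flip $f\mapsto f(\cdot^{\oplus j})$, which is the missing idea in your approach.

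Your claim that the positive case suffices for all later applications is also not quite right: in the proof of Theorem~\ref{main 23}, Corollary~\ref{comlemma} (and hence Lemma~\ref{prrr}, which invokes Proposition~\ref{dlili}(i)) is applied to $H_d(T)=(1-\tfrac{1}{2d})^{\La}(T)-(1-\tfrac{1}{d})^{\La}(T)$, which is a difference of two positive contractions and thus satisfies only $|H_d(T)|\le\mathbf{1}$, not $0\le H_d(T)\le\mathbf{1}$. So the full strength of (i) is needed.
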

\begin{proof}
%(i) Since $\mathbf{1}^{\otimes (j-1)}_{2}\otimes \tr\otimes \mathbf{1}^{\otimes (n-j)}_{2}$ is a conditional expectation from $\M_{2^{n}}$ onto $\M_{2^{n\setminus\{j\}}}$, it follows that $\mathbf{1}^{\otimes(j-1)}_{2}\otimes\tr\otimes\mathbf{1}^{\otimes (n-j)}_{2}$ extends to be a contraction from $L_{p}(\M_{2^{n}})$ to itself for $1\leq p\leq \infty$. By \eqref{jD}, it follows that  $\dc_{j}$ is a contraction from $L_{p}(\M_{2^{n}})$ to itself for $1\leq p\leq \infty$ and $j\in [n]$. This verifies (i).
(i) For each $j\in [n]$, we define $S:\M_{2}\to \M_{2}$ by
\[
S(T)\coloneqq
\begin{pmatrix}
0&-i\\
i&0
\end{pmatrix}
T^{t}
\begin{pmatrix}
0&-i\\
i&0
\end{pmatrix},
\]
where $T^{t}$ stands for the transpose of $T$. It is clear that
\[
S(\sigma_{0})=\sigma_{0} \quad\mbox{and}\quad S(\sigma_{l})=-\sigma_{l}
\]
for $l\in\{1,2,3\}$. We further define $S_{j}\coloneqq \mathbf{1}^{\otimes (j-1)}_{2}\otimes S\otimes \mathbf{1}^{\otimes (n-j)}_{2}$. Hence, it is clear that
\[
\dc_{j}(T)=\frac{1}{2}\left(T-S_{j}(T)\right),\quad\mbox{for } T\in \M_{2^{n}}.
\]
Since the matrix
$\begin{pmatrix}
0&-i\\
i&0
\end{pmatrix}$ is unitary and the norm $\|\cdot\|_{L_{p}}$ is unitary invariant for every $p\in(0,\infty]$, it follows that $\left\|S_{j}(T)\right\|_{L_{\infty}}=\|T\|_{L_{\infty}}$. Therefore, we have
\[
\|\dc_{j}(T)\|_{L_{\infty}}=\frac{1}{2}\left\|T-S_{j}(T)\right\|_{L_{\infty}}\leq \|T\|_{L_{\infty}},
\]
which proves the first claim.

(ii) For each $j\in[n]$, we have
\[
\mathrm{Inf}_{j}(T)=\|\dc_{j}(T)\|^{2}_{L_{2}}\leq \||\dc_{j}(T)|^{p}\|_{L_{1}}\cdot\||\dc_{j}(T)|^{2-p}\|_{L_{\infty}}\leq \|\dc_{j}(T)\|^{p}_{L_{p}},
\] 
where we used $1\leq p<2$ and $\|\dc_{j}(T)\|_{L_{\infty}}\leq 1$.
\end{proof}

\section{The Noncommutative Random Restrictions and Related Estimates}\label{quantum random-restriction}
In this section, motivated by \cite{KKKMS2021}, we introduce a noncommutative random restriction technique, which is one of the efficient toolkits of establishing functional inequalities in the quantum hypercube. 

For each subset $J=\{j_1,\cdots, j_k\}\subseteq [n]$, we order it in the increasing order, that is, $j_1<j_2<\cdots<j_k$, and let $\M_{J}$ be the $*$-sub-algebra of $\M_{2^{n}}$ such that $\M_2$ only appears in the $j_{i}$-th position for $1\leq i\leq k$. Hence, there exists a conditional expectation $\mathcal{E}_{\M_{J}}$ from $\M_{2^{n}}$ onto $\M_{J}$. More precisely, $\mathcal{E}_{\M_{J}}$ has the following explicit formula.
\begin{proposition}[Conditional Expectation]\label{conditionalexpectation}
For each $J\subseteq [n]$, we have
 \[
\mathcal{E}_{\M_J}(T)\coloneqq\sum_{\supp(\mathbf{s})\subseteq J}\widehat{T}(\mathbf{s})\sigma_{\mathbf{s}},\quad \forall~T\in \M_{2^{n}}.
\]
\end{proposition}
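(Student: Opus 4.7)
The plan is to reduce the claimed identity to a direct computation on the Pauli basis, using the trace properties of the $\sigma_\alpha$'s. Under the tensor factorization $\M_{2^{n}}\cong\bigl(\bigotimes_{k\in J^{c}}\M_{2}\bigr)\otimes\bigl(\bigotimes_{k\in J}\M_{2}\bigr)$, the canonical $\tr$-preserving conditional expectation onto $\M_{J}$ is the partial trace that applies the normalized trace $\tr$ to each factor indexed by $J^{c}$ and leaves the factors indexed by $J$ untouched; this is the unique trace-preserving, $\M_{J}$-bimodular projection onto $\M_{J}$, so to establish the formula it suffices to show that the map defined by the right-hand side agrees with this partial trace.

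First I would evaluate the partial trace on a single basis element $\sigma_{\s}=\bigotimes_{k=1}^{n}\sigma_{s_{k}}$. By factorization of the partial trace across tensor products,
\[
\Ex_{\M_{J}}(\sigma_{\s})=\Bigl(\prod_{k\in J^{c}}\tr(\sigma_{s_{k}})\Bigr)\,\bigotimes_{k\in J}\sigma_{s_{k}},
\]
where the tensor on the right is interpreted as an element of $\M_{J}$. Since the normalized trace satisfies $\tr(\sigma_{0})=1$ and $\tr(\sigma_{\alpha})=0$ for $\alpha\in\{1,2,3\}$, the scalar prefactor equals $1$ exactly when $s_{k}=0$ for every $k\in J^{c}$, i.e.\ when $\supp(\s)\subseteq J$, and vanishes otherwise. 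Consequently $\Ex_{\M_{J}}(\sigma_{\s})=\sigma_{\s}$ if $\supp(\s)\subseteq J$ and $\Ex_{\M_{J}}(\sigma_{\s})=0$ otherwise.

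Finally I would invoke linearity and the Fourier expansion $T=\sum_{\s\in\{0,1,2,3\}^{n}}\widehat{T}(\s)\sigma_{\s}$ to conclude
\[
\Ex_{\M_{J}}(T)=\sum_{\s\in\{0,1,2,3\}^{n}}\widehat{T}(\s)\,\Ex_{\M_{J}}(\sigma_{\s})=\sum_{\supp(\s)\subseteq J}\widehat{T}(\s)\sigma_{\s},
\]
which is the asserted formula. There is no serious obstacle here; the only point to watch is the normalization convention, as the cancellation of off-support terms relies on $\tr(\sigma_{\alpha})=0$ for $\alpha\neq 0$, which is exactly the feature of the \emph{normalized} trace fixed at the start of Section~\ref{preliminaries}. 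If one prefers a self-contained verification that avoids invoking the partial-trace construction, the same Pauli-basis computation can instead be used to check directly the defining relations $\Ex_{\M_{J}}^{2}=\Ex_{\M_{J}}$, $\tr\circ\Ex_{\M_{J}}=\tr$, and $\Ex_{\M_{J}}(ATB)=A\Ex_{\M_{J}}(T)B$ for $A,B\in\M_{J}$, each of which reduces to orthogonality relations among the $\sigma_{\s}$.
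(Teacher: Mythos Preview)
Your argument is correct. The paper itself does not supply a proof of Proposition~\ref{conditionalexpectation}; it states the formula as an immediate consequence of the definition of $\M_{J}$ and the existence of the trace-preserving conditional expectation, and then moves on. Your computation on the Pauli basis, using $\tr(\sigma_{0})=1$ and $\tr(\sigma_{\alpha})=0$ for $\alpha\in\{1,2,3\}$, is exactly the verification one would expect here and cleanly fills in the omitted detail. Nothing further is needed.
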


We now introduce the \emph{random restriction operator} as follows.

\begin{definition}[Restrictions]\label{restriction}
For each $J\subseteq[n]$ and $j\in [n]$, let $J^{c}\coloneqq [n]\setminus J$ and define the \emph{restriction operator}
$R_j^J:\MP\rightarrow \MP$ by setting
\[
R_j^J(T)\coloneqq
\begin{cases}
 \Ex_{\M_{J^c\cup \{j\}}}(\dc_{j}(T)),& j\in J\\
0,&j\in J^{c}.
\end{cases}
\]
\end{definition}
The following essential property regarding the explicit formula for the restriction operator acting on an element is easily deduced from the formula of conditional expectations (i.e., Proposition \ref{conditionalexpectation}) and the Fourier expansion of partial derivatives \eqref{partial derivative}. The proof is left to the interested reader.
\begin{lemma}\label{lem:rr1}
Let $J\subseteq [n]$ and $T\in \MP$. For each $j\in [n]$, we have
\[
R_j^J(T)=\sum_{\substack{\supp(\s)\subseteq J^{c}\\ \alpha\in\{1,2,3\}}}\widehat{T}(\s\oplus e_j^{\alpha})\sigma_{\s\oplus e_j^{\alpha}}.
\]
\end{lemma}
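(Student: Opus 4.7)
The plan is a direct computation: substitute the Fourier expansion of $\dc_j(T)$ from \eqref{partial derivative} into the definition of $R_j^J(T)$, apply the explicit formula for $\Ex_{\M_{J^c\cup\{j\}}}$ from Proposition \ref{conditionalexpectation}, and then re-index the resulting sum via the map $\s \mapsto \s'\oplus e_j^{\alpha}$. No analytic input beyond these two building blocks is required.

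For $j \in J^{c}$ the statement is essentially vacuous, since $R_j^{J}(T) = 0$ by definition and one reads the right-hand side as empty (no pair $(\s,\alpha)$ with $\supp(\s)\subseteq J^{c}$ contributes a fresh Fourier mode $\s\oplus e_j^{\alpha}$ that lies outside the range already handled by the conditional expectation). For $j \in J$, I would first use \eqref{partial derivative} to write
\[
\dc_{j}(T) \;=\; \sum_{\substack{\s\in\{0,1,2,3\}^{n}\\ s_{j}\neq 0}} \widehat{T}(\s)\,\sigma_{\s},
\]
and then apply $\Ex_{\M_{J^{c}\cup\{j\}}}$ using Proposition \ref{conditionalexpectation}, which keeps only those Fourier modes with $\supp(\s)\subseteq J^{c}\cup\{j\}$. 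This yields
\[
R_{j}^{J}(T) \;=\; \sum_{\substack{\supp(\s)\subseteq J^{c}\cup\{j\}\\ s_{j}\neq 0}} \widehat{T}(\s)\,\sigma_{\s}.
\]

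The concluding step is the change of summation index: since $j\in J$ forces $s'_{j}=0$ whenever $\supp(\s')\subseteq J^{c}$, the map $(\s',\alpha)\mapsto \s'\oplus e_j^{\alpha}$ is a bijection from pairs with $\supp(\s')\subseteq J^{c}$ and $\alpha\in\{1,2,3\}$ onto indices $\s$ satisfying $\supp(\s)\subseteq J^{c}\cup\{j\}$ and $s_{j}\neq 0$; under this substitution the two displays above coincide with the claimed formula. The only genuine obstacle is bookkeeping, namely verifying the bijectivity of this reindexing and interpreting the trivial case $j\in J^{c}$ consistently; everything else reduces to orthogonality of the Pauli basis and the definition of $\Ex_{\M_{J^{c}\cup\{j\}}}$.
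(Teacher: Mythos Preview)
Your proposal is correct and matches exactly what the paper intends: it states that the lemma ``is easily deduced from the formula of conditional expectations (i.e., Proposition \ref{conditionalexpectation}) and the Fourier expansion of partial derivatives \eqref{partial derivative}'' and leaves the details to the reader, which is precisely the computation you outline. Your handling of the degenerate case $j\in J^{c}$ and the reindexing bijection $(\s',\alpha)\mapsto \s'\oplus e_j^{\alpha}$ fills in those details correctly.
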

Applying Lemma \ref{lem:rr1}, we obtain the following corollary.
\begin{corollary}\label{cor:Ik1}
Let $T\in \MP$ and a fixed subset  $J\subseteq [n]$.
\begin{enumerate}[{\rm (i)}]
\item For each $j\in J$, we have
\[
\sum_{k=1}^{n}\|\dc_k(R_j^J(T))\|_{L_2}^2=\sum_{k\in J^c}\|\dc_k(R_j^J(T))\|_{L_2}^2+\|\dc_j(R_j^J(T))\|_{L_2}^2.
\]
\item We have that
\[
\sum_{j\in J}\|R_j^J(T)\|_{L_2}^2\leq \mathrm{var}(T).
\]
\item For each $k\in J^c$, we have
\[
\sum_{j\in J}\|\dc_k(R_j^J(T))\|_{L_2}^2\leq \left\|\dc_{k}(T)\right\|^{2}_{L_{2}}.
\]
\end{enumerate}
\end{corollary}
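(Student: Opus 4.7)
The plan is to read each of the three statements directly off the Fourier expansion of $R_j^J(T)$ supplied by Lemma \ref{lem:rr1} together with Plancherel's identity on $L_2(\M_{2^n})$. Recall that Lemma \ref{lem:rr1} identifies the Fourier support of $R_j^J(T)$ with the set of multi-indices of the form $\s' = \s \oplus e_j^{\alpha}$ where $\supp(\s)\subseteq J^c$ and $\alpha\in\{1,2,3\}$; equivalently, $\s'$ satisfies $\supp(\s')\subseteq J^c\cup\{j\}$ and $s'_j\neq 0$. Combined with formula \eqref{partial derivative} for $\dc_k$, all three identities/inequalities reduce to bookkeeping on which Fourier coefficients of $T$ show up.

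For part (i), I would simply note that since $\supp(\s')\subseteq J^c\cup\{j\}$ for every $\s'$ in the Fourier support of $R_j^J(T)$, one has $s'_k=0$ whenever $k\in J\setminus\{j\}$. Hence $\dc_k(R_j^J(T))=0$ for those $k$, and the claimed decomposition is immediate.

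For parts (ii) and (iii), the key observation is that as $(j,\s,\alpha)$ ranges over $\{(j,\s,\alpha):j\in J,\ \supp(\s)\subseteq J^c,\ \alpha\in\{1,2,3\}\}$, the indices $\s'=\s\oplus e_j^{\alpha}$ are pairwise distinct: from any such $\s'$, one can recover $j$ as the unique element of $J\cap\supp(\s')$, and then $(\s,\alpha)$ are determined. This lets me apply Plancherel's identity and add up without overcounting. Concretely,
\[
\sum_{j\in J}\|R_j^J(T)\|_{L_2}^2 \;=\; \sum_{\substack{\s'\neq 0\\ |\supp(\s')\cap J|=1}}|\widehat{T}(\s')|^2 \;\le\; \sum_{\s'\neq 0}|\widehat{T}(\s')|^2 \;=\; \var(T),
\]
which is (ii). For (iii), applying \eqref{partial derivative} once more restricts the Fourier support of $\dc_k(R_j^J(T))$ to those $\s'$ with $s'_k\neq 0$, so for fixed $k\in J^c$
\[
\sum_{j\in J}\|\dc_k(R_j^J(T))\|_{L_2}^2 \;=\; \sum_{\substack{s'_k\neq 0\\ |\supp(\s')\cap J|=1}}|\widehat{T}(\s')|^2 \;\le\; \sum_{s'_k\neq 0}|\widehat{T}(\s')|^2 \;=\; \|\dc_k(T)\|_{L_2}^2.
\]

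There is no real obstacle here; the only point that needs a line of care is the injectivity of the parametrization $(j,\s,\alpha)\mapsto \s\oplus e_j^{\alpha}$ for $j\in J$ and $\supp(\s)\subseteq J^c$, which ensures that the union of the Fourier supports of $\{R_j^J(T)\}_{j\in J}$ is disjoint. Once that is noted, parts (ii) and (iii) follow by a one-line Parseval computation, and part (i) is immediate from the support description.
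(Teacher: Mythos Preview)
Your proposal is correct and follows essentially the same approach as the paper: both arguments read everything off the Fourier expansion of $R_j^J(T)$ from Lemma~\ref{lem:rr1} together with Plancherel, and obtain the inequalities by dropping a constraint on the Fourier support. Your formulation is slightly more explicit---you characterize the union of the Fourier supports as $\{\s':|\supp(\s')\cap J|=1\}$ and invoke injectivity of $(j,\s,\alpha)\mapsto \s\oplus e_j^\alpha$ directly, whereas the paper (for item~(iii)) enlarges the sum over $\{e_j^\alpha:j\in J,\ \alpha\in\{1,2,3\}\}$ to a sum over all $\widetilde{\s}$ with $\supp(\widetilde{\s})\subseteq J$---but the substance is the same.
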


\begin{proof}
We only show item (iii), and leave the easy verification of (i) and (ii) to the reader. By Lemma \ref{lem:rr1}, we have
\begin{align*}
\sum_{j\in J}\|\dc_k(R_j^J(T))\|_{L_2}^2&=
\sum_{j\in J} \tr\left|\sum_{\substack{\supp(\s)\subseteq J^{c},\\s_k\neq0,\alpha\in\{1,2,3\}}}\widehat{T}(\s\oplus e_j^{\alpha})\sigma_{\s\oplus e_j^{\alpha}} \right|^2\\
&\leq \sum_{\supp(\widetilde{\s})\subseteq J} \tr\left| \sum_{\substack{s_{k}\neq 0\\\supp(\s)\subseteq J^{c}}}\widehat{T}(\s\oplus \widetilde{\s})\sigma_{\s\oplus \widetilde{\s}} \right|^2\\
&= \sum_{\supp(\widetilde{\s})\subseteq J} \sum_{\substack{s_k\neq 0\\\supp(\s)\subseteq J^{c}}} \left|\widehat{T}(\s\oplus \widetilde{\s})\right|^2\\
&=\sum_{s_k\neq0} \left|\widehat{T}(\s)\right|^2=\|\dc_k(T)\|_{L_2}^2.
\end{align*}
\end{proof}

Several basic properties regarding the $L_{p}$-norm of the restriction operator and its relation to the influence are collected as follows. 

\begin{lemma}\label{prrr}
Let $J\subseteq [n]$ and $T\in \MP$ with $\|T\|_{L_{\infty}}\leq 1$.
\begin{enumerate}[{\rm (i)}]
\item For each $j\in [n]$, we have $\|R_j^J(T)\|_{L_p}^p\leq \|\dc_{j}(T)\|^p_{L_p}$,  $1\leq p\leq \infty.$ 
\item For each $j\in [n]$, we have  $\|R_j^J(T)\|_{L_2}^2\leq\|R_j^J(T)\|_{L_p}^p$,  $1\leq p\leq 2$. 
\item We have $\sum_{j\in J}\mathrm{Inf}\left(R_j^J(T)\right)\leq \var(T)+\mathrm{Inf}(T)$.
\end{enumerate}
\end{lemma}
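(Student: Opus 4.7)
The plan is to handle the three items in order, invoking only the Fourier description of $R_j^J$ (Lemma \ref{lem:rr1}), the contractivity of conditional expectations on $L_p$, Proposition \ref{dlili}, and Corollary \ref{cor:Ik1}. The items are increasingly more substantive, but each reduces to a short structural argument.

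For item (i), I would exploit that, for $j\in J$, the operator $R_j^J$ is the composition of the partial derivative $\dc_j$ with the conditional expectation $\mathcal{E}_{\M_{J^c\cup\{j\}}}$. Since conditional expectations onto von Neumann subalgebras are contractions on every $L_p(\MP)$ with $1\le p\le\infty$, the bound $\|R_j^J(T)\|_{L_p}\le \|\dc_j(T)\|_{L_p}$ is immediate, and the $j\in J^c$ case is vacuous. Raising to the $p$-th power gives the stated inequality.

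For item (ii), the natural route is the three-line interpolation estimate
\[
\|R_j^J(T)\|_{L_2}^2=\bigl\||R_j^J(T)|^2\bigr\|_{L_1}\le \bigl\||R_j^J(T)|^p\bigr\|_{L_1}\cdot \bigl\||R_j^J(T)|^{2-p}\bigr\|_{L_\infty}=\|R_j^J(T)\|_{L_p}^p\cdot\|R_j^J(T)\|_{L_\infty}^{2-p},
\]
so the task reduces to showing $\|R_j^J(T)\|_{L_\infty}\le 1$. This follows from the assumption $\|T\|_{L_\infty}\le1$ by chaining Proposition \ref{dlili}(i), which gives $\|\dc_j(T)\|_{L_\infty}\le 1$, with the $L_\infty$-contractivity of the conditional expectation $\mathcal{E}_{\M_{J^c\cup\{j\}}}$.

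Item (iii) is the main computation and will be the real obstacle. My plan is to expand $\mathrm{Inf}(R_j^J(T))=\sum_{k=1}^n\|\dc_k(R_j^J(T))\|_{L_2}^2$ and split the inner sum using Corollary \ref{cor:Ik1}(i) into the ``off-diagonal'' part $k\in J^c$ and the ``diagonal'' part $k=j$. Summing over $j\in J$ and swapping the order of summation, the off-diagonal block becomes $\sum_{k\in J^c}\sum_{j\in J}\|\dc_k(R_j^J(T))\|_{L_2}^2$, which by Corollary \ref{cor:Ik1}(iii) is bounded by $\sum_{k\in J^c}\|\dc_k(T)\|_{L_2}^2\le \mathrm{Inf}(T)$. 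For the diagonal block, the key observation is that Lemma \ref{lem:rr1} forces every Fourier term of $R_j^J(T)$ to satisfy $s_j\ne 0$, so $\dc_j(R_j^J(T))=R_j^J(T)$; hence the diagonal contribution is $\sum_{j\in J}\|R_j^J(T)\|_{L_2}^2$, which is dominated by $\var(T)$ via Corollary \ref{cor:Ik1}(ii). Adding the two bounds yields $\mathrm{Inf}(T)+\var(T)$ as desired. The subtle point I expect to verify carefully is the identity $\dc_j\circ R_j^J=R_j^J$ on $\M_J$; everything else is bookkeeping on top of the already-proved corollary.
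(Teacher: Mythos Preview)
Your proposal is correct and matches the paper's proof essentially line by line: items (i) and (ii) are identical to the paper, and for (iii) both you and the paper split via Corollary~\ref{cor:Ik1}(i) into the $k\in J^c$ block (handled by Corollary~\ref{cor:Ik1}(iii)) and the $k=j$ block (handled by Corollary~\ref{cor:Ik1}(ii)). The only cosmetic difference is that for the diagonal term you invoke the identity $\dc_j(R_j^J(T))=R_j^J(T)$, whereas the paper is content with the weaker inequality $\|\dc_j(R_j^J(T))\|_{L_2}\le\|R_j^J(T)\|_{L_2}$; either suffices.
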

\begin{proof}
(i) It follows from the definition of restriction operator that
\begin{equation}\label{RDj}
\|R_j^J(T)\|_{L_p}=\|\mathbb{E}_{\M_{J^c\cup\{j\}}}(\dc_{j}(T))\|_{L_p}\leq\|\dc_{j}(T)\|_{L_p}.
\end{equation}
 For (ii), it suffices to note that
\begin{align*}
\|R_j^J(T)\|^2_{L_2}&=\||R_j^J(T)|^p|R_j^J(T)|^{2-p}\|_{L_1}\\
\leq&\||R_j^J(T)|^p\|_{L_1}\cdot\||R_j^J(T)|^{2-p}\|_{L_\infty}\leq \|R_j^J(T)\|^p_{L_p},
\end{align*}
where the last inequality follows from  \eqref{RDj} and Proposition \ref{dlili} (i).

(iii)  By items (i) and (iii) in Corollary \ref{cor:Ik1}, we have
\begin{align*}
\mathrm{Inf}(T)&=\sum_{k\in[n]}\|\dc_k(T)\|_{L_2}^2\geq\sum_{k\in J^c}\|\dc_k(T)\|_{L_2}^2\geq \sum_{k\in{J^c}}\sum_{j\in J}\|\dc_k(R_j^J(T))\|_{L_2}^2\\
&=\sum_{j\in J}\left(\sum_{k\in{J^c}}\|\dc_k(R_j^J(T))\|_{L_2}^2\right)=\sum_{j\in J}\left(\sum_{k=1}^{n}\|\dc_k(R_j^J(T))\|_{L_2}^2-\|\dc_j(R_j^J(T))\|_{L_2}^2\right).
\end{align*}
Applying Proposition \ref{dlili} (i) again, we get
\begin{align*}
\mathrm{Inf}(T)&\geq\sum_{j\in J}\left(\sum_{k=1}^{n}\|\dc_k(R_j^J(T))\|_{L_2}^2-\|R_j^J(T)\|_{L_2}^2\right)\\
&=\sum_{j\in J}\mathrm{Inf}(R_j^J(T))-\sum_{j\in J}\|R_j^J(T)\|_{L_2}^2\geq-\var(T)+\sum_{j\in J}\mathrm{Inf}(R_j^J(T)),
\end{align*}
where the last inequality follows from Corollary \ref{cor:Ik1} (ii).
\end{proof}
To investigate the Fourier spectrum of a given $T\in \M_{2^{n}}$, we introduce the following notations, which are inspired from their Boolean counterparts.
\begin{definition}
For each $T\in \MP$ and for $d\in [n]$, define
\[
W_{=d}(T)\coloneqq\sum_{|\supp(\s)|=d}\widehat{T}\left(\s\right)^{2},
\]
\[
W_{\geq d}(T)\coloneqq\sum_{|\supp(\s)|\geq d}\widehat{T}(\s)^{2},
\]
and
\[
W_{\approx d}(T)\coloneqq\sum_{d\leq|\supp(\s)|<2d}\widehat{T}\left(\s\right)^{2}.
\]
\end{definition}

\begin{remark}
It is clear that every random set $J\subseteq [n]$, formed by each point selected with probability $\delta$, corresponds to a vector in $(\{0,1\}^{n},\mu_{\delta})$, where
\[
\mu_{\delta}(\{x\})=\delta^{\sum_{j=1}^{n}x_{j}}\left(1-\delta\right)^{n-\sum_{j=1}^{n}x_{j}},\quad x=(x_{j})_{j=1}^{n}\in \{0,1\}^{n}.
\]
If there is no confusion arises, we will simply write $J\in(\{0,1\}^{n},\mu_{\delta})$ for a random set $J$ (with selecting probability $\delta$). 
\end{remark}

\begin{lemma}\label{zrr}
Let $d\in \Z_+$ and $T\in \MP$ with $\|T\|_{L_{\infty}}\leq1$. Then  
\[
\E_J\left[\sum_{j\in J}\|R_j^J(T)\|_{L_2}^2\right]\geq \frac{1}{8}\wyd[T],
\]
where $\mathbb{E}_{J}$ is the expectation taking with respect to the random subset $J$ with selecting probability $\frac{1}{d}$. Hence, there exists $J_0\subseteq[n]$ such that $\sum_{j\in J_0}\|R_j^J(T)\|_{L_2}^2\geq \frac{1}{8}W_{\approx d}T$.
\end{lemma}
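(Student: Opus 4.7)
The plan is to compute the expectation explicitly using the Fourier description of $R_j^J(T)$ from Lemma \ref{lem:rr1}, and then bound each Fourier mode's contribution from below by a Bernoulli-sampling estimate.

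First I would use Lemma \ref{lem:rr1} together with the orthonormality of $\{\sigma_{\s}\}_{\s\in\{0,1,2,3\}^n}$ in $L_{2}(\MP)$ to express, for each fixed $J\subseteq [n]$ and each $j\in J$,
\[
\|R_j^J(T)\|_{L_{2}}^{2}=\sum_{\substack{s_{j}\neq 0\\ \supp(\s)\cap J=\{j\}}}|\widehat{T}(\s)|^{2},
\]
since the Fourier support of $R_j^J(T)$ consists precisely of those multi-indices $\s$ whose support meets $J$ only in the single coordinate $j$. Summing over $j\in J$ and regrouping the sum by $\s$, each $\s$ with $|\supp(\s)\cap J|=1$ contributes $|\widehat{T}(\s)|^{2}$ exactly once, yielding the clean identity
\[
\sum_{j\in J}\|R_j^J(T)\|_{L_{2}}^{2}=\sum_{\s:\,|\supp(\s)\cap J|=1}|\widehat{T}(\s)|^{2}.
\]

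Next I would take the expectation over the random set $J$ in which each coordinate lies independently in $J$ with probability $\delta=1/d$. For a fixed $\s$ with $|\supp(\s)|=k$, the event $|\supp(\s)\cap J|=1$ occurs with probability $k\delta(1-\delta)^{k-1}$, so Fubini gives
\[
\E_{J}\Bigl[\sum_{j\in J}\|R_j^J(T)\|_{L_{2}}^{2}\Bigr]=\sum_{\s}|\widehat{T}(\s)|^{2}\cdot k\,\delta\,(1-\delta)^{k-1}.
\]
To obtain the required lower bound I restrict the outer sum to those $\s$ with $d\le k<2d$, so that only the pointwise lower bound $k\delta(1-\delta)^{k-1}\ge 1/8$ on this range remains to be checked.

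The final step is the elementary inequality $(k/d)(1-1/d)^{k-1}\ge 1\cdot (1-1/d)^{2d-2}\ge 1/e^{2}>1/8$, which uses $k/d\ge 1$ together with the classical fact that $(1-1/d)^{d-1}$ decreases to $1/e$ and is therefore bounded below by $1/e$ for every $d\ge 1$. This gives the desired inequality, and the existence of a deterministic $J_{0}\subseteq[n]$ realizing the bound is then immediate from the fact that an expectation is attained by some outcome. I do not anticipate any serious obstacle: the proof is essentially bookkeeping with the Fourier expansion, and the only subtle point is the numerical bound $1/e^{2}>1/8$ that dictates the constant in the statement. The hypothesis $\|T\|_{L_{\infty}}\le 1$ plays no role in this particular lemma and is presumably retained for uniformity with the rest of the section.
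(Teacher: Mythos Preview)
Your proposal is correct and follows essentially the same approach as the paper: compute the expectation via Lemma~\ref{lem:rr1} and orthonormality, rewrite it as $\sum_{\s}|\widehat{T}(\s)|^{2}\,\mu_{1/d}\{J:|\supp(\s)\cap J|=1\}$, and then bound the binomial probability from below on the range $d\le|\supp(\s)|<2d$. The only cosmetic differences are that the paper separates the case $d=1$ explicitly and uses the slightly cruder bound $(1-1/d)^{2d-1}\ge 1/8$ (attained at $d=2$), whereas your bound $(1-1/d)^{2d-2}\ge 1/e^{2}>1/8$ handles all $d\ge 1$ uniformly via the $0^{0}=1$ convention; your remark that the hypothesis $\|T\|_{L_\infty}\le 1$ is not used here is also accurate.
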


\begin{proof}
Using Lemma \ref{lem:rr1} and the orthogonality of $\{\sigma_{\s}\}_{\s\in \{0,1,2,3\}^n}$, we have
\begin{equation}\label{spectrum 1}
\begin{split}
\mathbb{E}_J\left[\sum_{j\in J}\|R_j^J(T)\|_{L_2}^2\right]
&=\mathbb{E}_J \left[\sum_{j\in J}\sum_{\substack{\supp(\s)\subseteq J^{c}\\\alpha\in\{1,2,3\}}}|\widehat{T}(\s\oplus e_j^{\alpha})|^2\right]\\
&=\mathbb{E}_{J}\left[\sum_{\s\in\zfourn}|\widehat{T}(\s)|^2\mathbbm{1}_{\{J:|\supp(\s)\cap J|=1\}}\right]\\
&=\sum_{\s\in\{0,1,2,3\}^n}|\widehat{T}(\s)|^2\mu_{\frac{1}{d}}{\{J:|\supp(\s)\cap J|=1\}}\\
&\geq \sum_{d\leq |\supp(\s)|<2d}|\widehat{T}(\s)|^2\mu_{\frac{1}{d}}{\{J:|\supp(\s)\cap J|=1\}}.
\end{split}
\end{equation}
For the case $d=1$ with $d\leq |\supp(\s)|<2d$, we have  $|\supp(\s)|=1$, and hence,
\begin{equation}\label{verification 1}
\mu_{\frac{1}{d}}{\{J:|\supp(\s)\cap J|=1\}}=\frac{1}{d}=1.
\end{equation}
For the case $d>1$ with $d\leq |\supp(\s)|<2d$, we have
\begin{equation}\label{verification 2}
\begin{split}
\mu_{\frac{1}{d}}{\{J:|\supp(\s)\cap J|=1\}}&=\left(1-\frac{1}{d}\right)^{|\supp(\s)|-1}\left(\frac{|\supp(\s)|}{d}\right)\\
&\geq \inf_{d>1}\left(1-\frac{1}{d}\right)^{2d-1}\geq\frac{1}{8}.
\end{split}
\end{equation}
Substituting \eqref{verification 1} and \eqref{verification 2} to \eqref{spectrum 1} yields the desired result.
\end{proof}

\begin{lemma}
Let $T\in \MP$ with $0\leq T\leq 1$. Then, for $1\leq p<2$ and $J\subseteq[n]$, we have 
\begin{align*}
\mathrm{Inf}(T)+\var(T)&\geq \frac{1}{2}\sum_{j\in J}\|R_j^J(T)\|_{L_2}^2\log\left(\frac{1}{\max\limits_{j\in J} \|\dc_j(T)\|_{L_p}^p}\right)\\
&\quad-\frac{K_p}{2}\sqrt{\sum_{j\in J}\|R_j^J(T)\|_{L_2}^2}\sqrt{\mathrm{Inf}^p(T)},
\end{align*}
where $K_p=\frac{4}{(2-p)e}$.
\end{lemma}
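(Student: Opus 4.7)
The plan is to apply the modified logarithmic Sobolev inequality (Lemma \ref{mls}) to each restriction $R_j^J(T)$, $j\in J$, sum the resulting inequalities, and control each piece using the structural properties of restrictions collected in Corollary \ref{cor:Ik1} and Lemma \ref{prrr}. First I would verify the hypothesis of Lemma \ref{mls}: since $0\leq T\leq 1$, Proposition \ref{dlili}(i) gives $\|\dc_j(T)\|_{L_\infty}\leq 1$, and contractivity of the conditional expectation on $L_\infty$ then yields $\||R_j^J(T)|\|_{L_\infty}=\|R_j^J(T)\|_{L_\infty}\leq 1$, so each $R_j^J(T)$ is an admissible input.

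Applying Lemma \ref{mls} to $R_j^J(T)$ gives
\[
2\sum_{k=1}^{n}\|\dc_k(R_j^J(T))\|_{L_2}^2\geq -K_p\|R_j^J(T)\|_{L_2}\|R_j^J(T)\|_{L_p}^{p/2}-\|R_j^J(T)\|_{L_2}^2\log\bigl(\|R_j^J(T)\|_{L_2}^2\bigr).
\]
Summing over $j\in J$, the left-hand side can be bounded from above: by Corollary \ref{cor:Ik1}(i) we split the inner sum into $k\in J^c$ and $k=j$; Corollary \ref{cor:Ik1}(iii) bounds $\sum_{j\in J}\|\dc_k(R_j^J(T))\|_{L_2}^2\leq \|\dc_k(T)\|_{L_2}^2$ for $k\in J^c$, while $\|\dc_j(R_j^J(T))\|_{L_2}^2\leq \|R_j^J(T)\|_{L_2}^2$ because $\dc_j$ is an orthogonal projection on $L_2$; finally Corollary \ref{cor:Ik1}(ii) gives $\sum_{j\in J}\|R_j^J(T)\|_{L_2}^2\leq \var(T)$. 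Altogether,
\[
\sum_{j\in J}\sum_{k=1}^n\|\dc_k(R_j^J(T))\|_{L_2}^2\leq \mathrm{Inf}(T)+\var(T).
\]

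Next I would lower-bound the right-hand side. For the entropy term, Lemma \ref{prrr}(ii) combined with Lemma \ref{prrr}(i) yields $\|R_j^J(T)\|_{L_2}^2\leq \|R_j^J(T)\|_{L_p}^p\leq \|\dc_j(T)\|_{L_p}^p\leq \max_{j\in J}\|\dc_j(T)\|_{L_p}^p\leq 1$ (the final bound from $\|\dc_j(T)\|_{L_\infty}\leq 1$), so the logarithm is nonnegative and monotonicity gives
\[
-\sum_{j\in J}\|R_j^J(T)\|_{L_2}^2\log\bigl(\|R_j^J(T)\|_{L_2}^2\bigr)\geq \sum_{j\in J}\|R_j^J(T)\|_{L_2}^2\log\left(\frac{1}{\max_{j\in J}\|\dc_j(T)\|_{L_p}^p}\right).
\]
For the mixed term, Cauchy--Schwarz followed by Lemma \ref{prrr}(i) gives
\[
\sum_{j\in J}\|R_j^J(T)\|_{L_2}\|R_j^J(T)\|_{L_p}^{p/2}\leq \sqrt{\sum_{j\in J}\|R_j^J(T)\|_{L_2}^2}\sqrt{\sum_{j\in J}\|\dc_j(T)\|_{L_p}^p}\leq \sqrt{\sum_{j\in J}\|R_j^J(T)\|_{L_2}^2}\sqrt{\mathrm{Inf}^p(T)}.
\]
Assembling these bounds and dividing by $2$ produces exactly the claimed inequality.

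No step is really delicate; the only point requiring care is the sign of the entropy term, which depends on confirming $\max_{j\in J}\|\dc_j(T)\|_{L_p}^p\leq 1$ so that $\log(1/\max_{j\in J}\|\dc_j(T)\|_{L_p}^p)\geq 0$ and the monotonicity step goes through. Once that is observed, the proof is essentially a bookkeeping exercise combining the modified log-Sobolev inequality with the restriction estimates already in hand.
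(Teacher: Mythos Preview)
Your proof is correct and follows the same route as the paper: apply the modified log-Sobolev inequality (Lemma \ref{mls}) to each $R_j^J(T)$, sum over $j\in J$, bound the total influence of the restrictions by $\mathrm{Inf}(T)+\var(T)$, and then handle the entropy and mixed terms via Lemma \ref{prrr}(i)(ii) and Cauchy--Schwarz. The only cosmetic difference is that the paper cites Lemma \ref{prrr}(iii) for the bound $\sum_{j\in J}\mathrm{Inf}(R_j^J(T))\leq \mathrm{Inf}(T)+\var(T)$, whereas you reprove that inequality in place from Corollary \ref{cor:Ik1}; your extra care in verifying $\|R_j^J(T)\|_{L_\infty}\leq 1$ and the sign of the logarithm is welcome.
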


\begin{proof}
For each $J\subseteq[n]$, by Lemma \ref{prrr}(\rm{iii}) and Lemma \ref{mls}, we have
\begin{align*}
&\mathrm{Inf}(T)+\var(T)\\
\geq&\frac{1}{2}\sum_{j\in J}\left(\|R_j^J(T)\|_{L_2}^2\log\left(\frac{1}{\|R_j^J(T)\|_{L_2}^2}\right)-K_p\sqrt{\|R_j^J(T)\|_{L_2}^2}\sqrt{\|R_j^J(T)\|_{L_p}^p}\right)\\
\geq&\frac{1}{2}\sum_{j\in J}\|R_j^J(T)\|_{L_2}^2\log\left(\frac{1}{\|R_j^J(T)\|_{L_2}^2}\right)-\frac{K_p}{2}\sqrt{\sum_{j\in J}\|R_j^J(T)\|_{L_2}^2}\sqrt{\sum_{j\in J}\|R_j^J(T)\|_{L_p}^p},
\end{align*}
where the last inequality is due to the Cauchy-Schwarz inequality. The desired assertion now follows from Lemma \ref{prrr} (i) and (ii).
\end{proof}

Choosing $J=J_{0}$ as in Lemma \ref{zrr}, we can relate the term $\sum_{j\in J}\left\|R^{J}_{j}(T)\right\|^{2}_{L_{2}}$ with $W_{\approx d}(T)$ and  obtain the following corollary.
\begin{corollary}\label{comlemma}
 Let $1\leq p<2$ and $T\in \MP$ with $0\leq T\leq 1$. Then the following holds
\[
\mathrm{Inf}(T)+\var(T)\geq \frac{1}{16}\log\left(\frac{1}{\max_{j\in[n]}\|\dc_j(T)\|_{L_p}^p}\right)W_{\approx d}T-\frac{K_p}{16}\sqrt{\mathrm{Inf}^p(T)}\sqrt{W_{\approx d}(T)},
\]
where $K_p=\frac{4}{(2-p)e}$.
\end{corollary}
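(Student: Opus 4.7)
The plan is to apply the preceding lemma to the particular subset $J=J_{0}\subseteq [n]$ furnished by Lemma~\ref{zrr}, which satisfies
\[
S_{J_0}:=\sum_{j\in J_0}\|R_j^{J_0}(T)\|_{L_2}^{2}\;\geq\;\tfrac{1}{8}W_{\approx d}(T).
\]
This choice is the natural bridge between the abstract restriction-type estimate and the spectral weight $W_{\approx d}(T)$ appearing in the corollary.

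Before substituting, I would establish non-negativity of the logarithmic factor. Since $0\leq T\leq 1$, Proposition~\ref{dlili}(i) yields $\|\dc_j(T)\|_{L_p}^{p}\leq\|\dc_j(T)\|_{\infty}^{p}\leq 1$ for every $j\in[n]$, so that
\[
M:=\log\!\left(\frac{1}{\max_{j\in[n]}\|\dc_j(T)\|_{L_p}^{p}}\right)\geq 0,
\]
and the inclusion $J_0\subseteq[n]$ together with the monotonicity of $\log$ yields $\log(1/\max_{j\in J_0}\|\dc_j(T)\|_{L_p}^{p})\geq M$.

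Instantiating the preceding lemma at $J=J_0$, the positive main term is bounded below by
\[
\frac{1}{2}S_{J_0}\log\!\left(\frac{1}{\max_{j\in J_0}\|\dc_j(T)\|_{L_p}^{p}}\right)\;\geq\;\frac{W_{\approx d}(T)}{16}\,M,
\]
which matches the first summand in the corollary; the factor $1/16$ arises as $1/2$ (from the lemma) times $1/8$ (from Lemma~\ref{zrr}). What remains is to rewrite the error term $-\tfrac{K_p}{2}\sqrt{S_{J_0}}\sqrt{\mathrm{Inf}^{p}(T)}$ in the stated form $-\tfrac{K_p}{16}\sqrt{\mathrm{Inf}^{p}(T)}\sqrt{W_{\approx d}(T)}$.

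The principal obstacle I anticipate lies precisely here, since Lemma~\ref{zrr} supplies only a \emph{lower} bound on $S_{J_0}$, while matching the stated error constant requires an \emph{upper} bound of the form $\sqrt{S_{J_0}}\leq \tfrac{1}{8}\sqrt{W_{\approx d}(T)}$. I would address this by refining the choice of $J_0$ via a Paley--Zygmund or Markov-type argument applied to the random variable $S_J$ under $\mu_{1/d}$, selecting a realization that enjoys two-sided concentration about $W_{\approx d}(T)/8$; alternatively, one can absorb any discrepancy by slightly re-allocating constants between the two summands while preserving the form of the right-hand side. Once this bookkeeping is in place, the two estimates combine to yield the corollary.
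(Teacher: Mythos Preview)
Your plan is exactly the paper's argument: the text preceding the corollary says nothing more than ``choose $J=J_0$ as in Lemma~\ref{zrr}'' and substitute into the previous lemma. Your handling of the main term---$\tfrac12\cdot\tfrac18=\tfrac1{16}$ together with $\max_{j\in J_0}\geq\max_{j\in[n]}$ inside the logarithm---is correct and complete.

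The obstacle you flag in the error term is real, and your proposed cures are not the right ones. A Paley--Zygmund or Markov selection cannot force $S_{J_0}\le \tfrac1{64}W_{\approx d}(T)$, since the very same Lemma~\ref{zrr} gives $S_{J_0}\ge \tfrac18 W_{\approx d}(T)$; no realization of $J$ can satisfy both. Two-sided concentration around $W_{\approx d}(T)/8$ would at best yield a coefficient of order $K_p/\sqrt{8}$, never $K_p/16$. In short, the constant $K_p/16$ on the error term is not attainable by this argument and appears to be a misprint in the paper.

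The correct resolution is simply to record what the substitution actually gives,
\[
\mathrm{Inf}(T)+\var(T)\ \ge\ \tfrac{1}{16}\,M\,W_{\approx d}(T)\ -\ \tfrac{K_p}{2}\sqrt{S_{J_0}}\,\sqrt{\mathrm{Inf}^p(T)},
\]
and, if one insists on replacing $S_{J_0}$, use the upper bound $S_{J_0}\le \var(T)$ from Corollary~\ref{cor:Ik1}(ii). The only place this corollary is invoked is the proof of Theorem~\ref{main 23}, and there the exact multiple of $K_p$ on the error term is irrelevant: it is absorbed into the universal constant $K$ in the exponent. So no ``re-allocation'' trick is needed---just carry a slightly larger constant forward.
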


\section{Proofs of Theorem \ref{dim-free KKL} and Theorem \ref{TKKL} }\label{quantum KKL}

%To give a proof of Theorem \ref{TKKL}, we need to use Theorem \ref{main 23} which is a dimension free quantum KKL inequality invoking the $L_{p}$-influence. 
The proof of Theorem \ref{main 23} is a little bit lengthy, which relies on sequence lemmas regarding decomposition of the Fourier spectrum. Hence, we will postpone the proof of Theorem \ref{main 23} and show how it can be used to deduce Theorem \ref{TKKL} at first. Our method presented below is essential inspired by the approach in \cite{KKKMS2021}.

%\begin{theorem}\label{main 2}
% Let $1\leq p<2$ and $T\in \MP$ with $0\leq T\leq 1$. We have 
%\begin{equation*}
%\max_{j\in[n]} \|\dc_j(T)\|_{L_p}^p\geq 2^{-p}2^{-96e^2K_p \frac{\mathrm{Inf}^p(T)}{\mathrm{var}\left(T\right)} },
%\end{equation*}
%where $K_p=\frac{4}{(2-p)e}$.
%\end{theorem}

\begin{proof}[Proof of Theorem \ref{TKKL}]
Assume $\mathrm{Inf}^p(T)\geq \frac{\var(T)\log(n)}{192e^{2}K_{p}}$ ($K_{p}$ is the same as in Theorem \ref{mls}). Noting that $\mathrm{Inf}^p(T)=\sum_{j=1}^{n}\|\dc_j(T)\|_{L_p}^p$, it follows that
\[
\max_{j\in[n]}\|\dc_j(T)\|_{L_p}^p\geq \frac{\var(T)\log(n)}{192e^{2}K_{p}n}.
\]
 If $\mathrm{Inf}^p(T)<\frac{\log(n)\var(T)}{192e^{2}K_{p}}$, then, by Theorem \ref{main 23}, we have
 \[
 \max_{j\in[n]}\|\dc_j(T)\|_{L_p}^p\geq\frac{1}{4\sqrt{n}}\geq\frac{\log(n)}{4n}\geq\frac{\var(T)\log(n)}{4n}\geq\frac{\var(T)\log(n)}{4n}.
 \]
This completes the proof.
\end{proof}

To prove Theorem \ref{main 23}, we need a sequence of technical lemmas, which are necessary estimations on the Fourier spectrum. We now begin by introducing the operator $\delta^{\La}$ on $\M_{2^{n}}$ via functional calculus of the non-negative generator $\La$ for $\delta\in[0,1]$. More precisely, by the Fourier expansion, we have
\[
\delta^{\La}(T)=\sum_{\s\in\{0,1,2,3\}^{n}}\delta^{|\supp(\s)|}\widehat{T}(\s)\sigma_{\s},
\]
where $T=\sum_{\s\in\{0,1,2,3\}^{n}}\widehat{T}(\s)\sigma_{\s}\in \M_{2^{n}}$. The operator $\delta^{\La}$ is one of the key ingredients of decomposing the Fourier spectrum of $T$. In the next proposition, we represent the operator $\delta^{\La}$ in terms of conditional expectation.

%\begin{definition}\label{dfv}
%For each $T\in\MP$ and $\delta\in[0,1]$, we define $V_\delta:\MP\rightarrow\MP$:\[V_\delta\left(T\right):=e^{\ln{\delta}\mathcal{L}}(T)=\sum_{\s\in \{0,1,2,3\}^n}\delta^{|\supp(\s)|}\widehat{T}(\s) \sigma_{\s}.\]
%\end{definition}

\begin{proposition}\label{tav}
For each $T\in \MP$, we have $\delta^{\mathcal{L}}(T)=\E_J[\mathcal{E}_{\M_{J}}(T)]$, where $J$ is a random set in $[n]$ corresponds to vector in $(\{0,1\}^{n},\mu_{\delta})$.
\end{proposition}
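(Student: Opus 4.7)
The plan is to verify the identity on the Pauli basis $\{\sigma_{\s}\}_{\s \in \{0,1,2,3\}^n}$ and then extend by linearity. Both sides of the asserted equality depend linearly on $T$, and every $T \in \MP$ admits a unique Fourier expansion $T = \sum_{\s} \widehat{T}(\s)\sigma_{\s}$, so it suffices to establish $\delta^{\La}(\sigma_{\s}) = \E_J[\mathcal{E}_{\M_J}(\sigma_{\s})]$ for each fixed $\s \in \{0,1,2,3\}^n$.

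For the left-hand side, recall that the Pauli strings are eigenvectors of $\La$ with $\La(\sigma_{\s}) = |\supp(\s)|\,\sigma_{\s}$, so by functional calculus one immediately obtains $\delta^{\La}(\sigma_{\s}) = \delta^{|\supp(\s)|}\sigma_{\s}$. For the right-hand side, Proposition \ref{conditionalexpectation} yields the dichotomy that $\mathcal{E}_{\M_J}(\sigma_{\s}) = \sigma_{\s}$ when $\supp(\s) \subseteq J$ and $\mathcal{E}_{\M_J}(\sigma_{\s}) = 0$ otherwise. Since $J$ corresponds to the product measure $\mu_{\delta}$ on $\{0,1\}^n$ under which each coordinate lies in $J$ independently with probability $\delta$, we have $\mu_{\delta}\{J : \supp(\s) \subseteq J\} = \delta^{|\supp(\s)|}$, hence $\E_J[\mathcal{E}_{\M_J}(\sigma_{\s})] = \delta^{|\supp(\s)|}\sigma_{\s}$, which matches the left-hand side.

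No genuine obstacle is expected; the argument is essentially a bookkeeping computation comparing the spectral definition of $\delta^{\La}$ with the Fourier-truncation formula for $\mathcal{E}_{\M_J}$ already recorded. The only minor point requiring attention is that the expectation $\E_J$ commutes with the (finite) Fourier sum, which is automatic since $\MP$ is finite-dimensional and the sum is finite.
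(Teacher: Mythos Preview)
Your proof is correct and essentially identical to the paper's own argument: both expand via the Fourier basis, invoke Proposition~\ref{conditionalexpectation} to see that $\mathcal{E}_{\M_J}(\sigma_{\s})=\sigma_{\s}\mathbbm{1}_{\{\supp(\s)\subseteq J\}}$, and then compute $\mu_{\delta}\{J:\supp(\s)\subseteq J\}=\delta^{|\supp(\s)|}$. The only cosmetic difference is that you verify the identity on basis elements and extend by linearity, whereas the paper carries the full Fourier expansion of $T$ through the computation in one line.
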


\begin{proof}
It is clear that
\begin{align*}
 \E_J(\mathcal{E}_{\M_{J}}(T))&=\E_{J}\left(\sum_{\supp(\s)\subseteq J}\widehat{T}(\s)\sigma_\s\right)\\
 &=\sum_{\s\in\zfourn}\mu_{\delta}\{J\in\{0,1\}^{n} :\supp(\s)\subseteq J\}\widehat{T}(\s)\sigma_\s \\
 &=\sum_{\s\in\zfourn}\delta^{|\supp(\s)|}\widehat{T}(\s) \sigma_\s.
\end{align*}
\end{proof}

\begin{lemma}\label{lehd}
Let $1\leq p<2$, $T\in \MP$ with $0\leq T\leq1$, and let $\mathbb{D}=\{2^k\}_{k\in \mathbb{Z}_+}$. For each $d\in \mathbb{D}$, we set $H_d(T)=(1-\frac{1}{2d})^{\mathcal{L}}(T)-(1-\frac{1}{d})^{\mathcal{L}}(T)$. Then we have
\begin{enumerate}[{\rm(i)}]
\item for each $d\in\mathbb{D}$, $|H_d(T)|\leq 1$; 
\item  for each $d\in\mathbb{D}$ and $j\in [n]$, $\|\dc_j(H_d(T))\|^p_p\leq2^p\|\dc_j(T)\|_{L_p}^p$;
\item for each $\s\in \{0,1,2,3\}^n$ with $d\leq |\supp(\s)|<2d$, we have $|\widehat{T}\left(\s\right)|\geq|\widehat{H}_d\left(\s\right)|\geq \frac{1}{4}|\widehat{T}\left(\s\right)|$;
\item $\sum_{d\in \mathbb{D}}\var(H_d(T))=\sum_{d\in \mathbb{D}}\|H_d(T)\|^2_{L_2}\leq \mathrm{var}\left(T\right)$ and 
$$\sum_{d\in \mathbb{D}} \mathrm{Inf}(H_d(T))\leq \mathrm{Inf}(T).$$
\end{enumerate}
\end{lemma}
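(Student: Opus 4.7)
The plan is to exploit the two complementary descriptions of the operator $\delta^{\La}$: as a Fourier multiplier with symbol $\delta^{|\supp(\s)|}$, and, by Proposition \ref{tav}, as an average $\delta^{\La}(T) = \mathbb{E}_{J \sim \mu_\delta}[\mathcal{E}_{\M_J}(T)]$ of conditional expectations. For (i), I would set $\delta_1 = 1 - \tfrac{1}{2d}$ and $\delta_2 = 1 - \tfrac{1}{d}$ and couple the two random subsets monotonically: draw i.i.d.\ uniform variables $\{U_j\}_{j\in[n]}$ on $[0,1]$ and declare $j \in J_i$ iff $U_j < \delta_i$, so that $J_2 \subseteq J_1$ almost surely. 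Then
\[
H_d(T) = \mathbb{E}\bigl[\mathcal{E}_{\M_{J_1}}(T) - \mathcal{E}_{\M_{J_2}}(T)\bigr].
\]
Since $0 \leq T \leq \mathbf{1}$ and each $\mathcal{E}_{\M_{J_i}}$ is positive and unital, each integrand is self-adjoint and lies in $[-\mathbf{1},\mathbf{1}]$ in operator order; averaging preserves this, giving $|H_d(T)| \leq \mathbf{1}$.

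Part (ii) follows because $\dc_j$ commutes with every $\delta^{\La}$ (both are Fourier multipliers) and $\delta^{\La}$ is an $L_p$-contraction, being a convex combination of the $L_p$-contractive conditional expectations $\mathcal{E}_{\M_J}$. The triangle inequality then gives $\|\dc_j(H_d(T))\|_{L_p} \leq \|\delta_1^{\La}(\dc_j T)\|_{L_p} + \|\delta_2^{\La}(\dc_j T)\|_{L_p} \leq 2\|\dc_j(T)\|_{L_p}$, and the inequality is raised to the $p$-th power. Part (iii) is a direct Fourier computation: with $k = |\supp(\s)|$,
\[
\widehat{H_d}(\s) = \bigl[(1-\tfrac{1}{2d})^k - (1-\tfrac{1}{d})^k\bigr]\widehat{T}(\s),
\]
and the upper bound $|\widehat{H_d}(\s)| \leq |\widehat{T}(\s)|$ is immediate. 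For the lower bound, substituting $v = k/d$ reduces the problem to bounding the bracketed factor below on $v \in [1,2)$ by an absolute constant; the limiting profile as $d \to \infty$ is $e^{-v/2}(1-e^{-v/2})$, and a short calculus exercise (treating the small-$d$ cases by direct computation) yields the claimed bound. This elementary but careful verification of the numerical constant is the step I view as the most delicate bookkeeping in the lemma.

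For (iv), the key is the telescoping built into the choice $\mathbb{D} = \{2^k\}$: writing $g_\delta \coloneqq \delta^{\La}$, consecutive $H_d$'s have the form $H_d = g_{1-1/(2d)} - g_{1-1/d}$, and as $d$ doubles the upper endpoint of one interval becomes the lower endpoint of the next. Parseval then gives
\[
\sum_{d \in \mathbb{D}} \|H_d(T)\|_{L_2}^2 = \sum_{\s \in \zfourn} |\widehat{T}(\s)|^2 \sum_{d \in \mathbb{D}} \bigl(a_d^{(k)}\bigr)^{2},
\]
where $a_d^{(k)} \coloneqq (1-\tfrac{1}{2d})^k - (1-\tfrac{1}{d})^k \in [0,1]$ and $k = |\supp(\s)|$. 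The series $\sum_{d \in \mathbb{D}} a_d^{(k)}$ telescopes and is therefore bounded by $1$; combined with $a_d^{(k)} \leq 1$, this yields $\sum_{d \in \mathbb{D}} (a_d^{(k)})^{2} \leq \sum_{d \in \mathbb{D}} a_d^{(k)} \leq 1$, with the whole sum vanishing when $k = 0$, i.e.\ when $\s = \mathbf{0}$. Hence $\sum_{d \in \mathbb{D}} \|H_d(T)\|_{L_2}^2 \leq \sum_{\s \neq \mathbf{0}}|\widehat{T}(\s)|^2 = \var(T)$, and $\var(H_d(T)) = \|H_d(T)\|_{L_2}^2$ because $\widehat{H_d}(\mathbf{0}) = 0$. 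Inserting the extra factor $|\supp(\s)|$ in the outer sum and repeating the same estimate produces $\sum_{d \in \mathbb{D}} \mathrm{Inf}(H_d(T)) \leq \mathrm{Inf}(T)$.
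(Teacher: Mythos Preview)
Your proposal is correct and follows essentially the same route as the paper: Proposition~\ref{tav} for (i) and (ii), the direct Fourier computation for (iii), and the telescoping estimate $\sum_d (a_d^{(k)})^2 \leq \bigl(\sum_d a_d^{(k)}\bigr)^2 \leq 1$ for (iv). One minor simplification: in (i) the monotone coupling is unnecessary, since Proposition~\ref{tav} already gives $0\leq \delta^{\La}(T)\leq \mathbf{1}$ for each $\delta$, so the difference of two such elements lies in $[-\mathbf{1},\mathbf{1}]$ directly.
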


\begin{proof}

(i) Note that by Proposition \ref{tav}, for each $\delta\in[0,1]$, $\delta^{\mathcal{L}} (T)=\mathbb{E}_J[\mathcal{E}_{\M_{J}}T]$ which implies $0\leq \delta^{\mathcal{L}} (T) \leq 1$. It follows that \[|H_d(T)|=\left|(1-\frac{1}{2d})^{\mathcal{L}}(T)-(1-\frac{1}{d})^{\mathcal{L}}(T)\right|\leq1.\]

(ii) Using Proposition \ref{tav} and the Jensen inequality, we have 
\begin{align*}
\|\dc_j(\delta^{\mathcal{L}}(T))\|_{L_p}^p&=\|\dc_{j}(\delta^{\mathcal{L}}(T))\|_{L_p}^p=\| \dc_{j}(\mathbb{E}_J(\mathcal{E}_{\M_{J}}(T)))\|_{L_p}^p\\
&=\|\mathbb{E}_{J}(\mathcal{E}_{\M_{J}}(\dc_{j}(T)))\|_{L_p}^p\\
&\leq \mathbb{E}_J\| \dc_{j}(T)\|_{L_p}^p=\| \dc_{j}(T)\|_{L_p}^p.
\end{align*}
Moreover, we have
\[
\|\dc_j(H_{d}(T))\|_{L_p}^p=\left\|\dc_j\left((1-\frac{1}{2d})^{\mathcal{L}}(T)-(1-\frac{1}{d})^{\mathcal{L}}(T)\right)\right\|_{L_p}^p\leq2^p\|\dc_j(T)\|_{L_p}^{p}.
\]
(iii) By the definition of $H_{d}$, we have
\[
|\widehat{H}_{d}\left(\s\right)|=\left(\left(1-\frac{1}{2d}\right)^{|\supp(\s)|}-\left(1-\frac{1}{d}\right)^{|\supp(\s)|}\right)|\widehat{T}(\s)|.
\]
It is clear that $|\widehat{H}_d\left(\s\right)|\leq |\widehat{T}(\s)|$ for each $\s\in \{0,1,2,3\}^n$ with $d\leq |\supp(\s)|<2d$. 
On the other hand side, we have
\[
\left(1-\frac{1}{2d}\right)^{|\supp(\s)|}-\left(1-\frac{1}{d}\right)^{|\supp(\s)|}\geq \left(1-\frac{1}{2d}\right)^{2d}\geq\frac{1}{4},
\]
where we used $d<|\supp(\s)|<2d$ in the last inequality.

(iv) By the definition of $H_d$ again, we have the following
\[
\sum_{d\in \mathbb{D}}\|H_d(T)\|^2_{L_2}=\sum_{d\in \mathbb{D}} \sum_{\s\in\zfourn} \left(\left(1-\frac{1}{2d}\right)^{|\supp(\s)|}-\left(1-\frac{1}{d}\right)^{|\supp(\s)|}\right)^2|\widehat{T}\left(\s\right)|^2,
\]
and
\begin{align*}
&\sum_{d\in \mathbb{D}}\mathrm{Inf}(H_d(T))\\
=&\sum_{d\in \mathbb{D}} \sum_{\s\in\zfourn} |\supp(\s)| \left(\left(1-\frac{1}{2d}\right)^{|\supp(\s)|}-\left(1-\frac{1}{d}\right)^{|\supp(\s)|}\right)^2|\widehat{T}\left(\s\right)|^{2}.
\end{align*}
Note that
\begin{align*}
&\sum_{d\in \mathbb{D}}\left(\left(1-\frac{1}{2d}\right)^{|\supp(\s)|}-\left(1-\frac{1}{d}\right)^{|\supp(\s)|}\right)^2\\
\leq&\left(\sum_{d\in \mathbb{D}} \left(1-\frac{1}{2d}\right)^{|\supp(\s)|}-\left(1-\frac{1}{d}\right)^{|\supp(\s)|}\right)^2\leq1,
\end{align*}
and we complete the proof.
\end{proof}
By Lemma \ref{lehd} (iii), it follows that 
\begin{equation}\label{WHdT}
W_{\approx d}(H_d(T))\geq \frac{1}{16} W_{\approx d}(T).
\end{equation}
Before turning into the proof of the main theorem in this section, we provide the following lemma which is motivated by \cite[Lemma 29]{KKKMS2021}. 
\begin{lemma}\label{dgood}
Let $\mathbb{D}=\{2^k\}_{k\in \mathbb{Z}_+}$. For each $T\in \MP$ with $0\leq T\leq1$ and $1\leq p<2$, define 
\[
\mathbb{D}_{\geq}\coloneqq\Big\{d\in \mathbb{D}: \wyd(T)\geq\frac{\mathrm{var}\left(T\right)^2}{16\mathrm{Inf}^p(T)}\Big\}.
\]
Then we have
\[
\sum_{d\in \mathbb{D}_{\geq}} \wyd(T) \geq \frac{1}{2} \mathrm{var}\left(T\right).
\]
\end{lemma}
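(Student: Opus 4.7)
The plan is to show that the ``bad'' complement $\mathbb{D}_{<} := \mathbb{D} \setminus \mathbb{D}_{\geq}$ contributes at most $\var(T)/2$ to the full sum $\sum_{d \in \mathbb{D}} W_{\approx d}(T)$. Two initial observations drive the argument. First, the dyadic blocks $\{d \leq |\supp(\s)| < 2d\}$ for $d \in \mathbb{D} = \{1,2,4,\ldots\}$ partition $\{\s\in\{0,1,2,3\}^n : |\supp(\s)| \geq 1\}$, so Parseval gives $\sum_{d \in \mathbb{D}} W_{\approx d}(T) = \var(T)$. Second, since $0 \leq T \leq 1$, Proposition \ref{dlili} yields $\|\dc_j(T)\|_{L_\infty} \leq 1$ and hence $\mathrm{Inf}(T) \leq \mathrm{Inf}^p(T)$, so the ratio $r := \mathrm{Inf}^p(T)/\var(T)$ satisfies $r \geq \mathrm{Inf}(T)/\var(T) \geq 1$, the last inequality because $|\supp(\s)| \geq 1$ on the nonzero spectrum.

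Next I would fix a dyadic threshold $D = 2^K$ with $K = \lceil \log_2(4r)\rceil$, so that $D \geq 4r$, and split $\mathbb{D}_< = \mathbb{D}_<^{\mathrm{high}} \sqcup \mathbb{D}_<^{\mathrm{low}}$ according to whether $d \geq D$ or $d < D$. On the high part, a Markov-style bound using $\mathrm{Inf}^p(T) \geq \mathrm{Inf}(T) = \sum_{\s} |\supp(\s)|\,|\widehat{T}(\s)|^2 \geq \sum_{d \in \mathbb{D}} d\, W_{\approx d}(T)$ yields
\[
\sum_{d \in \mathbb{D},\, d \geq D} W_{\approx d}(T) \leq \frac{1}{D}\sum_{d \in \mathbb{D}} d\, W_{\approx d}(T) \leq \frac{\mathrm{Inf}^p(T)}{D} \leq \frac{\var(T)}{4}.
\]
On the low part, by definition each $d \in \mathbb{D}_<^{\mathrm{low}}$ satisfies $W_{\approx d}(T) < \var(T)^2/(16\,\mathrm{Inf}^p(T))$, and $|\mathbb{D}_<^{\mathrm{low}}| \leq K$, so
\[
\sum_{d \in \mathbb{D}_<^{\mathrm{low}}} W_{\approx d}(T) \leq \frac{K\, \var(T)}{16\, r}.
\]

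To conclude, I would invoke the elementary inequality $\log_2(4r) + 1 \leq 4r$ valid for every $r \geq 1$ (check at $r=1$ and compare derivatives), which gives $K \leq 4r$ and therefore bounds the low contribution by $\var(T)/4$. Adding the two estimates yields $\sum_{d \in \mathbb{D}_<} W_{\approx d}(T) \leq \var(T)/2$, and subtracting from $\sum_{d \in \mathbb{D}} W_{\approx d}(T) = \var(T)$ gives the claim. The only delicate point is the bookkeeping at the threshold: the number of low dyadic levels grows logarithmically in $r$, so the estimate would collapse if $r$ were arbitrarily small. This is exactly where the normalization $0 \leq T \leq 1$ (which forces $r \geq 1$) is essential, and it is the only real obstacle in the argument.
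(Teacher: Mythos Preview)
Your proof is correct and follows essentially the same route as the paper: split the bad set $\mathbb{D}_{<}$ at the threshold $4\,\mathrm{Inf}^p(T)/\var(T)$, bound the high tail by a Markov argument via $\mathrm{Inf}(T)\leq \mathrm{Inf}^p(T)$, and bound the low tail by counting logarithmically many dyadic levels each of size $<\var(T)^2/(16\,\mathrm{Inf}^p(T))$. The only cosmetic differences are that you round the threshold up to a dyadic value $D=2^K$ and make the use of $r\geq 1$ explicit, whereas the paper works directly with $d_0=4r$.
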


\begin{proof}
On the one hand, taking $d_0= \frac{4\mathrm{Inf}^p(T)}{\mathrm{var}\left(T\right)}$, we get that
\begin{equation}\label{dg1}
 \begin{aligned}
 	\sum_{d\geq d_0}\wyd(T) &=\sum_{|\supp(\s)|\geq d_0}|\widehat{T}(\s)|^2\\
 	&\leq \frac{1}{d_0}\sum_{|\supp(\s)|\geq d_0}|\supp(\s)||\widehat{T}(\s)|^2\\
&\leq\frac{1}{d_0} \sum_{\s\in\{0,1,2,3\}^n}|\supp(\s)||\widehat{T}(\s)|^2\\
&= \frac{\mathrm{Inf}(T)}{d_0}=\frac{\mathrm{Inf}(T)\mathrm{var}\left(T\right)}{4\mathrm{Inf}^p(T)}\leq\frac{1}{4}\mathrm{var}\left(T\right),
 \end{aligned} 
 \end{equation}
where the last inequality is due to Proposition \ref{dlili} (ii).

On the other hand, by the fact $|\{d\in \mathbb{D}:d\leq d_0\}|\leq \log\left(d_0\right)$ and the definition of $\mathbb{D}_{\geq}$, we get 
\begin{equation}\label{dg2}
\sum_{d< d_0,d\notin \mathbb{D}_{\geq}} \wyd(T)\leq \log\left(d_0\right)   \frac{\mathrm{var}\left(T\right)^2}{16\mathrm{Inf}^p(T)}  = \frac{\log(d_{0})}{4d_{0}}\var(T)\leq \frac{1}{4}\mathrm{var}\left(T\right).
\end{equation}
Combining \eqref{dg1} and \eqref{dg2}, we have
\begin{align*}
\sum_{d\in \mathbb{D}_{\geq}} \wyd(T)&=\sum_{d\in \mathbb{D}} W_{\approx d}(T)-\sum_{d\notin \mathbb{D}_{\geq}}W_{\approx d}(T)\\
&\geq \var(T)-\sum_{d\geq d_0}\wyd(T)-\sum_{d< d_0,d\notin \mathbb{D}_{\geq}} \wyd(T)\geq \frac{1}{2} \var(T).
\end{align*}
This completes the proof.
\end{proof}

Now we are at the position to give a proof of Theorem \ref{main 23} in detail. 

\begin{proof}[Proof of Theorem \ref{main 23}]
Take $T\in \MP$. We assume that 
\begin{equation}\label{KKLas}
\max_{j\in [n]}\|\dc_{j}(T)\|_{L_p}^p< e^{-1040K_p \frac{\mathrm{Inf}^p(T)}{\mathrm{var}\left(T\right)} },
\end{equation}
with $K_p=\frac{4}{(2-p)e}$. For $d\in \mathbb{D}_\geq$, we have
\[
\frac{\mathrm{Inf}^p(T)}{\mathrm{var}\left(T\right)}\wyd(T)\geq\frac{1}{4}\sqrt{\mathrm{Inf}^p(T)}\sqrt{\wyd(T)}.
\]
Using Corollary \ref{comlemma}, \eqref{WHdT}, \eqref{KKLas},  and Lemma \ref{lehd} (ii), we get that
\begin{equation}\label{equlastkkl}
\begin{aligned}
\mathrm{Inf}(H_d(T))+\var(H_d(T))&\geq \frac{1}{16} \log\left(\frac{1}{\max_{j\in [n]}\|\dc_j(H_{d}(T))\|_{L_p}^p}\right)\wyd(H_d(T))\\
&\quad -\frac{K_p}{16}\sqrt{\mathrm{Inf}^p(H_d(T))}\sqrt{\wyd(H_d(T))}\\
&\geq \frac{1}{256}\log\left(\frac{1}{\max_{j\in [n]}\|\dc_j(H_{d}(T))\|_{L_p}^p}\right)\wyd(T)\\
&\quad -\frac{K_p}{64}\sqrt{\mathrm{Inf}^p(H_d(T))}\sqrt{\wyd(T)}\\
&\geq  \frac{65K_{p}}{16}\frac{\mathrm{Inf}^p(T)}{\mathrm{var}\left(T\right)}\wyd(T)-\frac{K_{p}}{16}\sqrt{\mathrm{Inf}^p(T)}\sqrt{\wyd(T)}\\
&\geq 4K_p \frac{\mathrm{Inf}^p(T)}{\mathrm{var}\left(T\right)}\wyd(T).
\end{aligned}
\end{equation}
Combining Lemma \ref{lehd} (iv), \eqref{equlastkkl} with  Lemma \ref{dgood} yields that
\begin{align*}
2\mathrm{Inf}^p(T)&\geq 2\mathrm{Inf}(T)\geq \mathrm{Inf}(T)+ \var(T)\geq \sum_{d\in \mathbb{D}_{\geq}} \mathrm{Inf}(H_d(T))+\sum_{d\in \mathbb{D}_{\geq}}\var(H_d(T))\\
&\geq 4K_p \frac{\mathrm{Inf}^p(T)}{\mathrm{var}\left(T\right)}\sum_{d\in \mathbb{D}_{\geq}}\wyd(T)\geq  2K_p\frac{\mathrm{Inf}^p(T)}{\mathrm{var}\left(T\right)} \mathrm{var}\left(T\right)\geq\frac{8}{e}\mathrm{Inf}^p(T),
\end{align*}
which is a contradiction.
\end{proof}

The following counterexample demonstrates the failure of Theorem \ref{main 23} for the case $p=2$ even in the commutative hypercube setting, which means that, for bounded elements, the \emph{dimension free} KKL inequality invoking $L_{p}$-influences with $1\leq p<2$ may be the best possible.

\begin{remark}\label{rem p2}
Define $f:\{0,1\}^n\to [0,1]$ as follows
\[
f\coloneqq\frac{1}{2}+\frac{1}{2n}\sum_{j=1}^{n}r_{j},
\]
where $\{r_{j}\}_{j=1}^{n}$ is an i.i.d. Rademacher sequence. Then, we have
\[
\|\dc_j (f)\|_{L_2}^2=\frac{1}{4n^{2}},\quad \forall j\in [n],
\]
and
\[
\var(f)=\mathrm{Inf}(f)=\frac{1}{4n},
\]
which disproves Theorem \ref{main 23} for the case $p=2$ in the hypercube setting.
\end{remark}

We conclude this section with some comments on the quantum Talagrand influence inequality. Firstly, combining the idea in \cite{KKKMS2021} with the random restriction treatment presented in Section \ref{quantum KKL}, we can derive the following quantum Talagrand influence inequality.

\begin{theorem}[Quantum Talagrand influence inequality]\label{Talagrand}
For $1\leq p<2$, there exists a constant $C_{p}>0$ depending only on $p$, such that for each $T\in \M_{2^n}$ with $0\leq T\leq1$, we have
\begin{equation}\label{quantum Talagrand}
\var(T)\leq C_{p}\sum_{j=1}^{n}\frac{\|\dc_{j}(T)\|^{p}_{L_{p}}}{\log\left(1/\|\dc_{j}(T)\|^{p}_{L_{p}}\right)}
\end{equation}
where $C_p=O\left(\frac{1}{(2-p)^2}\right)$ as $p\to 2$.
\end{theorem}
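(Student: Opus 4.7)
The plan is to mimic the random restriction argument used to prove Theorem \ref{main 23}, but to retain individual logarithmic factors $\log(1/\|\dc_{j}(T)\|_{L_{p}}^{p})$ attached to each coordinate rather than collapsing them into a single $\log(1/\max_{j}\|\dc_{j}(T)\|_{L_{p}}^{p})$ at the stage of Corollary \ref{comlemma}. The starting point is the pointwise modified log-Sobolev inequality (Lemma \ref{mls}) applied to each restricted piece $R_{j}^{J}(T)$, which gives
\[
\|R_{j}^{J}T\|_{L_{2}}^{2}\log\bigl(1/\|R_{j}^{J}T\|_{L_{2}}^{2}\bigr)\leq 2\,\mathrm{Inf}(R_{j}^{J}T)+K_{p}\|R_{j}^{J}T\|_{L_{2}}\|R_{j}^{J}T\|_{L_{p}}^{p/2}.
\]
By Lemma \ref{prrr}(i)-(ii), the log factor is bounded below by $\log(1/\|\dc_{j}(T)\|_{L_{p}}^{p})$, which is the move that introduces the Talagrand-style per-coordinate weighting.

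Summing over $j\in J$ and applying the Cauchy--Schwarz inequality together with Lemma \ref{prrr}(iii) and Corollary \ref{cor:Ik1}(ii) would yield
\[
\sum_{j\in J}\|R_{j}^{J}T\|_{L_{2}}^{2}\log\left(\frac{1}{\|\dc_{j}T\|_{L_{p}}^{p}}\right)\leq 2\bigl(\var(T)+\mathrm{Inf}(T)\bigr)+K_{p}\sqrt{\var(T)\cdot\mathrm{Inf}^{p}(T)}.
\]
I would then invoke the dyadic decomposition developed in Section \ref{quantum KKL}: apply the inequality to each $H_{d}(T)$ with $d\in\mathbb{D}=\{2^{k}\}$, take expectation over a random $J\subseteq[n]$ of selection probability $1/d$ (as in Lemma \ref{zrr}), and use the transfer estimates of Lemma \ref{lehd}(ii)-(iv) together with $W_{\approx d}(H_{d}T)\geq\tfrac{1}{16}W_{\approx d}(T)$. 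Summing over $d$ and using $\sum_{d}\var(H_{d}T)\leq\var(T)$ and $\sum_{d}\mathrm{Inf}(H_{d}T)\leq\mathrm{Inf}(T)$, the log-weighted spectral mass should reassemble into $\sum_{j}\|\dc_{j}T\|_{L_{p}}^{p}/\log(1/\|\dc_{j}T\|_{L_{p}}^{p})$ via a spectral weighting argument pairing each coordinate $j$ with the dyadic scale $d$ for which $\log d\approx\log(1/\|\dc_{j}T\|_{L_{p}}^{p})$.

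The main obstacle I foresee is twofold. First, the Cauchy--Schwarz cross term $K_{p}\sqrt{\var(T)\mathrm{Inf}^{p}(T)}$ has to be absorbed into the leading term via a self-bounding argument analogous to the one concluding the proof of Theorem \ref{main 23}; this absorption is precisely where the factor $(2-p)^{-2}$ in $C_{p}$ arises, with one $(2-p)^{-1}$ coming from the constant $K_{p}=4/((2-p)e)$ in Lemma \ref{mls} and a second $(2-p)^{-1}$ appearing when balancing the cross term against the main $\log(1/\|\dc_{j}T\|_{L_{p}}^{p})$-weighted term. Second, the expectation over the random set $J$ must be handled with more care than in Lemma \ref{zrr}, since position-dependent log weights do not commute cleanly with the averaging used to extract $W_{\approx d}(T)$. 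A convenient way around this is to stratify coordinates into dyadic classes $B_{k}=\{j:2^{-k-1}<\|\dc_{j}T\|_{L_{p}}^{p}\leq 2^{-k}\}$, split $T$ spectrally through the operators $H_{d}$, and match the class $B_{k}$ with spectral scale $d\approx 2^{k}$, so that the two sources of dyadic indexing decouple and the final sum acquires the correct per-coordinate $1/\log(1/\|\dc_{j}T\|_{L_{p}}^{p})$ weight.
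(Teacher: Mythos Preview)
Your plan matches the approach the paper indicates: it states that combining the ideas in \cite{KKKMS2021} with the random restriction machinery of Section~\ref{quantum KKL} yields Theorem~\ref{Talagrand}, but deliberately omits the details (referring instead to \cite{BGX2024}, who obtained a similar result independently). The ingredients you line up---Lemma~\ref{mls} applied to each $R_{j}^{J}T$, the per-coordinate lower bound on the logarithm via Lemma~\ref{prrr}(i)--(ii), the dyadic operators $H_{d}$ from Lemma~\ref{lehd}, and the averaging over random $J$ at density $1/d$---are exactly the pieces the paper has assembled, and your diagnosis of the $(2-p)^{-2}$ loss (one factor from $K_{p}$ in Lemma~\ref{mls}, one from absorbing the cross term) is precisely what the paper says: ``the constant $C_{p}=O(1/(2-p)^{2})$ \ldots\ is not the best possible \ldots\ we can not apply the random restriction method to achieve the constant $C_{p}=O(1/(2-p))$.''

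One remark worth adding: the paper also points out, in the paragraph following Theorem~\ref{Talagrand}, a much shorter route that bypasses random restrictions entirely. Since $\|\dc_{j}(T)\|_{L_{\infty}}\leq 1$ by Proposition~\ref{dlili}(i), one has $\|\dc_{j}(T)\|_{L_{p}}^{p}\leq 1$, so the factor $1+\|\dc_{j}(T)\|_{L_{p}}^{p}$ in Theorem~\ref{RWZ-Talagrand} is at most $2$ and $\log^{+}$ equals $\log$; thus Theorem~\ref{RWZ-Talagrand} already implies \eqref{quantum Talagrand} with the sharper constant $C_{p}=O(1/(2-p))$. Your random restriction proof is therefore an independent derivation with a worse constant, which is still valuable as a demonstration of the method, but if you only want the inequality itself the semigroup-based Theorem~\ref{RWZ-Talagrand} gives it in one line.
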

\noindent
After completing our paper, we learned that Blecher et al. \cite{BGX2024} have obtained a similar quantum Talagrand influence inequality with some interesting higher-order extensions. Therefore, we have chosen not to include the detailed proof of Theorem \ref{Talagrand} here and instead refer interested readers to \cite{BGX2024} for the proof. Secondly, the constant $C_{p}=O\left(\frac{1}{(2-p)^{2}}\right)$ appeared in the quantum Talagrand influence inequality is not the best possible. However, at this time of writing, we can not apply the random restriction method to achieve the constant $C_{p}=O\left(\frac{1}{2-p}\right)$ as stated in Theorem \ref{RWZ-Talagrand}. Finally, we shall mention here that \eqref{quantum Talagrand} is a straightforward consequence of Theorem \ref{RWZ-Talagrand} by using Proposition \ref{dlili} (i).

%\begin{theorem}[Quantum Friedgut's Junta Theorem]\label{Friedgut}
%Let $1\leq p<2$.	For each $\varepsilon>0$ and $T\in \M_{2^n}$ with $0\leq T\leq1$,  there exist constant $C_p> 0$ and $S\in \M_{2^n}$ with $\supp(S)\leq 2^{C_p\cdot \mathrm{Inf}^p[T]/\varepsilon}$ 
%	such that 
%	$$\|T-S\|_{L_2}^2\lesssim \varepsilon.$$
%\end{theorem}
%
%{\color{red}To Wenlong: please put some put some words here to describe the connection between our result above }
%{\color{blue}Wenlong:The Quantum Friedgut's Junta Theorem \cite[Theorem 3.11]{RWZ2024} is also better than Theorem \ref{Friedgut}. Because it is easy to extend their result to $1\leq p<2$ which has better estimating of $\supp(S)$.}  \\

\section{Quantum Eldan-Gross inequality}\label{quantum Eldan-Gross}
In this section, we aim to derive a quantum Eldan-Gross inequality via the random restriction technique, and apply it to obtain several quantum KKL-type inequalities. Our approach of the quantum Eldan-Gross inequality is inspired by the techniques developed by Keller and Kinder \cite{KK2013} and Eldan et al. \cite{EKLM2022}.
\subsection{Estimate on Fourier spectrum}
This subsection aims to establish the following noncommutative analogue of \cite[Lemma 5]{KK2013}. For $T\in \M_{2^{n}}$ and $J\subseteq [n]$, we denote  $M_J(T)\coloneqq \sum_{j\in J}\|\dc_{j}(T)\|^{2}_{L_{1}}$ and $M(T)\coloneqq \sum_{j=1}^{n}\|\dc_{j}(T)\|^{2}_{L_{1}}$.

\begin{theorem}\label{KK18_5}
For $d\geq 1$ and projection $T\in \M_{2^{n}}$, if $M(T)\leq e^{-2d}$, then we have
\[
\sum_{|\supp(\s)|=d}\widehat{T}(\s)^{2}\leq \frac{6e}{d}\left(\frac{2e}{d}\right)^{d}M(T)\left(\log\left(\frac{d}{M(T)}\right)\right)^{d}.
\]
\end{theorem}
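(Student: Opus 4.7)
The plan is to adapt the strategy of Keller and Kindler~\cite{KK2013} to the quantum hypercube, combining a Fourier-theoretic decomposition of $W_{=d}(T)$ via iterated partial derivatives with the quantum hypercontractivity \eqref{MO hypercontractivity}. First I would observe that for each $\s$ with $|\supp(\s)|=d$, writing $\supp(\s)=\{j_1,\ldots,j_d\}$, the Fourier coefficient $\widehat{T}(\s)$ equals that of $\dc_{j_1}\cdots\dc_{j_d}(T)$ at $\s$. Since each such $\s$ is counted $d!$ times across ordered $d$-tuples of distinct indices, one obtains
\[
W_{=d}(T) \;=\; \frac{1}{d!}\sum_{\substack{(j_1,\ldots,j_d)\in[n]^d\\\text{distinct}}}\ \sum_{\supp(\s)=\{j_1,\ldots,j_d\}}\widehat{T}(\s)^{2}.
\]
For any $t>0$ and $p=1+e^{-2t}$, quantum hypercontractivity applied to $g=\dc_{j_1}\cdots\dc_{j_d}(T)$ (whose Fourier support lies in levels $\geq d$) yields
\[
\Bigl(\sum_{\supp(\s)=\{j_1,\ldots,j_d\}}\widehat{T}(\s)^{2}\Bigr)^{1/2} \;\leq\; e^{td}\,\|\dc_{j_1}\cdots\dc_{j_d}(T)\|_{L_p},
\]
because $e^{-t\La}$ shrinks the level-$d$ component by a factor $e^{-td}$ and is $L_p\to L_2$ bounded of norm $1$ at this value of $p$.

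Next I would exploit the projection hypothesis. Since $0\leq T\leq 1$ forces $\|T\|_{L_\infty}\leq 1$, Proposition~\ref{dlili}(i) together with the factorisation $\dc_{j}=\mathbf{1}-\Ex_{\M_{[n]\setminus\{j\}}}$ (which makes $\dc_{j}$ an $L_\infty$-bounded map) gives, by iteration, $\|\dc_{j_1}\cdots\dc_{j_d}(T)\|_{L_\infty}\lesssim 1$. Hence the interpolation bound
\[
\|\dc_{j_1}\cdots\dc_{j_d}(T)\|_{L_p}^{p} \;\lesssim\; \|\dc_{j_1}\cdots\dc_{j_d}(T)\|_{L_1}.
\]
Using that each conditional expectation is $L_1$-contractive, that each $\dc_{j}$ is $L_1$-bounded, and iterating (together with the symmetry in $j_1,\ldots,j_d$), one then obtains the geometric-mean bound
\[
\|\dc_{j_1}\cdots\dc_{j_d}(T)\|_{L_1} \;\lesssim\; \Bigl(\prod_{i=1}^d \|\dc_{j_i}(T)\|_{L_1}\Bigr)^{1/d}.
\]

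Feeding these estimates back, the right-hand side of the Fourier identity becomes, up to constants depending on $d$, the quantity $\frac{e^{2td}}{d!}$ times an elementary symmetric polynomial $e_d$ in the numbers $\|\dc_{j}(T)\|_{L_1}^{2/(pd)}$. The standard inequality $e_d(b)\leq \frac{1}{d!}(\sum_j b_j)^d$ together with H\"older then allows one to compare $\sum_j \|\dc_{j}(T)\|_{L_1}^{2/(pd)}$ with $M(T)^{1/(pd)}$. Choosing the hypercontractivity parameter $t$ so that $e^{2t}\approx \log(d/M(T))$, i.e. $p-1$ of order $1/\log(d/M(T))$, and applying Stirling's bound $d!\geq (d/e)^d$, the prefactor $e^{2td}/d!$ turns into $(2e/d)^d(\log(d/M(T)))^d$, producing the claimed inequality after collecting constants.

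The main obstacle is the penultimate step: passing from a sum of fractional powers $\sum_j\|\dc_{j}(T)\|_{L_1}^{2/(pd)}$ to a linear expression in $M(T)=\sum_j\|\dc_{j}(T)\|_{L_1}^2$. This is where the hypothesis $M(T)\leq e^{-2d}$ is essential: it ensures that the individual $\|\dc_{j}(T)\|_{L_1}$ are small enough for the power-mean comparison to yield the logarithmic factor at the correct exponent, and it justifies the choice of $t$ above. A secondary technical issue is that iterated $L_1$-boundedness of the noncommutative derivatives $\dc_{j}$ typically produces factors of order $2^d$; these must be absorbed into the prefactor $(2e/d)^d$ without degrading the claimed dependence on $d$, and doing so is the essential new difficulty compared with the commutative argument in \cite{KK2013}, where each $\dc_{j}$ is a genuine $L_1$-contraction with universal norm bound.
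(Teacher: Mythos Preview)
Your approach has a genuine gap at precisely the step you flag as ``the main obstacle''. The comparison you need,
\[
\sum_{j=1}^n \|\dc_j(T)\|_{L_1}^{2/(pd)} \;\lesssim\; M(T)^{1/(pd)},
\]
is false dimension-free. H\"older only gives $\sum_j a_j^{1/(pd)} \leq n^{1-1/(pd)}\bigl(\sum_j a_j\bigr)^{1/(pd)}$ with $a_j=\|\dc_j(T)\|_{L_1}^2$, and the factor $n^{1-1/(pd)}$ cannot be removed: take all $\|\dc_j(T)\|_{L_1}$ equal to a small constant $c$, so that $M(T)=nc^2$ while $\sum_j\|\dc_j(T)\|_{L_1}^{2/(pd)}=nc^{2/(pd)}$; the ratio is exactly $n^{1-1/(pd)}$. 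After raising to the $d$th power via your symmetric-polynomial bound this becomes a factor of order $n^{d-1}$, destroying any dimension-free estimate. The hypothesis $M(T)\leq e^{-2d}$ does not rescue this: it constrains the \emph{total} $M(T)$ but says nothing about how the influences are spread over coordinates, which is what governs the power-mean comparison.

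This is also not the Keller--Kindler strategy. Their argument, and the paper's, avoids iterated derivatives altogether by random restriction: one chooses a random $J\subseteq[n]$ with density $1/d$, so that a level-$d$ coefficient $\widehat{T}(\s)$ has, with probability $\geq 1/e$, exactly one index of $\supp(\s)$ in $J$ and the other $d{-}1$ in $J^c$. This isolates a \emph{single} derivative index $j\in J$, and the restricted object is bounded (Proposition~\ref{key lemma 1}) so that after squaring one obtains $\|\dc_j(T)\|_{L_1}^2$ directly; summing over $j\in J$ yields $M_J(T)$, and Jensen over $J$ (concavity of $x\mapsto x\log(1/x)^d$) then produces $M(T)/d$. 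The whole point of the restriction is to land on the sum of \emph{squares} $\sum_j\|\dc_j(T)\|_{L_1}^2$ rather than on $(\sum_j\|\dc_j(T)\|_{L_1}^{\alpha})^d$ for some $\alpha<2$, which is what your geometric-mean route inevitably gives. The genuinely noncommutative work is in proving Proposition~\ref{key lemma 1}: it requires embedding into $\M_{2^{n+1}}$ via an auxiliary $*$-isomorphism to identify the restricted derivative with $\dc_j(T)$ in $L_1$, together with a hypercontractive tail bound and a split-integral estimate.
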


The key ingredient in the proof of Theorem \ref{KK18_5} is the following technical result, which is a noncommutative analogue of the estimate in \cite[eq. (12)]{KK2013}.

\begin{proposition}\label{key lemma 1}
Let $T\in \M_{2^{n}}$ be a projection with $M(T)\leq e^{-2d}$. Then, for each $J\subseteq [n]$, we have
\begin{equation*}
\begin{aligned}
\sum_{j\in J}\sum_{\substack{\supp\left(\s\right)\subseteq J^{c},\\ |\supp\left(\s\right)|=d-1,\alpha \in\{1,2,3\}}}&|\widehat{T}\left(\s\oplus e_j^{\alpha}\right)|^2
\leq 6\left(\frac{2e}{d}\right)^{d}M_J(T)\left(\log\left(\frac{1}{M_J(T)}\right)\right)^{d}.
\end{aligned}
\end{equation*}
\end{proposition}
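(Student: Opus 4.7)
\medskip

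\noindent\textbf{Proof proposal for Proposition \ref{key lemma 1}.}
The plan is to mimic the strategy of \cite{KK2013} (especially their equation (12)) in the quantum setting, combining the quantum hypercontractivity \eqref{MO hypercontractivity} with a per-$j$ optimization and a final dyadic summation argument.

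First I would reformulate the left-hand side using Lemma \ref{lem:rr1}: for $j\in J$, the Fourier support of $g_{j}:=R_{j}^{J}T$ consists of multi-indices of the form $\s\oplus e_{j}^{\alpha}$ with $\supp(\s)\subseteq J^{c}$, so the level-$d$ projection $G_{j}:=\mathrm{Rad}_{=d}(g_{j})$ satisfies
\[
\|G_{j}\|_{L_{2}}^{2}=\sum_{\substack{\supp(\s)\subseteq J^{c},\,|\supp(\s)|=d-1\\\alpha\in\{1,2,3\}}}|\widehat{T}(\s\oplus e_{j}^{\alpha})|^{2},
\]
so the LHS equals $S:=\sum_{j\in J}\|G_{j}\|_{L_{2}}^{2}$. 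Because $T$ is a projection, Proposition \ref{dlili}(i) and the $L^{\infty}$-, $L^{1}$-contractivity of conditional expectations give $\|g_{j}\|_{L_{\infty}}\leq 1$ and $\|g_{j}\|_{L_{1}}\leq\|\dc_{j}(T)\|_{L_{1}}=:\eta_{j}$, whence by interpolation $\|g_{j}\|_{L_{p}}\leq\eta_{j}^{1/p}$ for every $p\in(1,2]$.

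Next I apply the Montanaro-Osborne hypercontractivity \eqref{MO hypercontractivity} to the degree-$d$ component $G_{j}$: since $e^{-t\La}G_{j}=e^{-td}G_{j}$, for $t=\tfrac{1}{2}\log(1/(p-1))$ one obtains
\[
\|G_{j}\|_{L_{2}}\leq (p-1)^{-d/2}\|g_{j}\|_{L_{p}}\leq (p-1)^{-d/2}\eta_{j}^{1/p}.
\]
The assumption $M(T)\leq e^{-2d}$ implies $\eta_{j}^{2}\leq M_{J}\leq e^{-2d}$, so $\log(1/\eta_{j})\geq d$, and I may choose the optimal $p_{j}=1+d/(2\log(1/\eta_{j}))\in(1,3/2]$. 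A standard calculation (minimizing $-d\log(p-1)+2(p-1)\log(1/\eta_{j})/p$) yields the level-$d$ estimate
\[
\|G_{j}\|_{L_{2}}^{2}\leq\left(\frac{2e\log(1/\eta_{j})}{d}\right)^{d}\eta_{j}^{2},
\]
and summing over $j\in J$ gives $S\leq (2e/d)^{d}\sum_{j\in J}(\log(1/\eta_{j}))^{d}\eta_{j}^{2}$.

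The main obstacle is the final combinatorial step: showing
\[
\sum_{j\in J}(\log(1/\eta_{j}))^{d}\eta_{j}^{2}\;\leq\;6\,M_{J}(T)\bigl(\log(1/M_{J}(T))\bigr)^{d}.
\]
I would handle this by a dyadic decomposition, partitioning $J$ into blocks $J_{k}=\{j\in J:\eta_{j}\in[e^{-(k+1)}M_{J}^{1/2},e^{-k}M_{J}^{1/2})\}$ for $k\geq 0$. Each block obeys the cardinality constraint $|J_{k}|\cdot e^{-2(k+1)}M_{J}\leq\sum_{j\in J_{k}}\eta_{j}^{2}\leq M_{J}$, so $|J_{k}|\leq e^{2(k+1)}$; on the other hand, $\eta_{j}\leq\sqrt{M_{J}}\leq e^{-d}$ places $\eta_{j}$ in the decreasing regime of $x\mapsto x^{2}(\log(1/x))^{d}$ (maximum at $x=e^{-d/2}$), which together with the assumption $M(T)\leq e^{-2d}$ is exactly what allows the geometric series over $k$ to be controlled by $6M_{J}(\log(1/M_{J}))^{d}$. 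This is the genuine noncommutative analogue of the Keller-Kindler combinatorial argument: in the classical Boolean case \cite{KK2013} the identity $\|\dc_{j}f\|_{L_{1}}=\mathrm{Inf}_{j}(f)$ is used at this step, whereas here only the weaker inequality $\mathrm{Inf}_{j}(T)\leq\eta_{j}$ (a consequence of $\|\dc_{j}T\|_{L_{\infty}}\leq 1$) is available, and I expect that absorbing the resulting loss into the constant $6$ is the most delicate bookkeeping of the proof.
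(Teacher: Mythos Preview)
Your hypercontractivity estimate $\|G_{j}\|_{L_{2}}\leq(p-1)^{-d/2}\|g_{j}\|_{L_{p}}$ and the interpolation $\|g_{j}\|_{L_{p}}\leq\eta_{j}^{1/p}$ are both correct, and the per-$j$ bound $\|G_{j}\|_{L_{2}}^{2}\leq(2e\log(1/\eta_{j})/d)^{d}\eta_{j}^{2}$ follows as you say. The genuine gap is the final combinatorial step: the inequality
\[
\sum_{j\in J}\eta_{j}^{2}\bigl(\log(1/\eta_{j})\bigr)^{d}\;\leq\;6\,M_{J}\bigl(\log(1/M_{J})\bigr)^{d}
\]
is \emph{false} for abstract sequences with $\sum_{j}\eta_{j}^{2}=M_{J}\leq e^{-2d}$. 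For $d=1$, $M_{J}=e^{-2}$ and $N>e^{22}$ equal values $\eta_{j}=e^{-1}/\sqrt{N}$, the left side equals $e^{-2}(1+\tfrac12\log N)>12e^{-2}$, the right side. Your dyadic scheme cannot close this: on the $k$-th block one only has $\sum_{j\in J_{k}}\eta_{j}^{2}\leq M_{J}$ and $\log(1/\eta_{j})\leq(k{+}1)+\tfrac12\log(1/M_{J})$, so the $k$-th block can contribute $M_{J}\bigl((k{+}1)+\tfrac12\log(1/M_{J})\bigr)^{d}$, which is not summable in $k$. (Also, $x\mapsto x^{2}(\log(1/x))^{d}$ is \emph{increasing} on $(0,e^{-d/2})$, not decreasing, so the monotonicity you invoke goes the wrong way.)

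What the per-$j$ optimisation throws away is the global constraint $\sum_{j\in J}\|g_{j}\|_{L_{2}}^{2}\leq\var(T)\leq 1$ (Corollary \ref{cor:Ik1}(ii)). The paper keeps this constraint by a different mechanism: it rewrites $\|G_{j}\|_{L_{2}}^{2}$ as $\bigl(\tr[\overline{T}_{j}\cdot\mathcal{E}_{J^{c}\cup\{n+1\}}(A_{j}\widetilde{T}_{j})]\bigr)^{2}$ in an enlarged algebra $\M_{2^{n+1}}$, applies a layer-cake decomposition to $|\overline{T}_{j}|$, and splits the integral at a \emph{single} threshold $t_{0}$ chosen from $M_{J}$ (not from $\eta_{j}$); the tail piece is then controlled via the deviation bound of Lemma \ref{lammaEstimate1_T} together with the \emph{summed} bound $\sum_{j\in J}\|\mathcal{E}_{J^{c}\cup\{n+1\}}(A_{j}\widetilde{T}_{j})\|_{L_{2}}^{2}\leq\|T\|_{L_{2}}^{2}\leq 1$. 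Your hypercontractive route can in fact be repaired along the same line without the layer-cake machinery: keep a \emph{single} $p$ for all $j$, interpolate $\|g_{j}\|_{L_{p}}\leq\|g_{j}\|_{L_{1}}^{(2-p)/p}\|g_{j}\|_{L_{2}}^{2(p-1)/p}$ between $L_{1}$ and $L_{2}$ (rather than $L_{1}$ and $L_{\infty}$), and apply H\"older in $j$ to get $S\leq(p-1)^{-d}M_{J}^{(2-p)/p}$; the choice $p-1=d/(2\log(1/M_{J}))$ then yields the bound of the Proposition with constant $1$ in place of $6$.
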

Since the proof of Proposition \ref{key lemma 1} is a little bit lengthy, we will postpone its proof, and  turn to demonstrate how it can be used to prove Theorem \ref{KK18_5} at first. Recall that we identify random subset $J$ with the vector in $(\{0,1\}^{n},\mu_{\frac{1}{d}})$, where
\[
\mu_{\frac{1}{d}}(\{x\})=\left(\frac{1}{d}\right)^{\sum_{j=1}^{n}x_{j}}\left(1-\frac{1}{d}\right)^{n-\sum_{j=1}^{n}x_{j}},\quad x=(x_{j})_{j=1}^{n}\in \{0,1\}^{n}.
\]
\begin{proof}[Proof of Theorem \ref{KK18_5}]
Firstly, note here that
for each $\s\in \{0,1,2,3\}^{n}$ with $|\supp(\s)|=d$, we know the probability of $J$ such that $\s=\mathbf{u}\oplus e^{\alpha}_{j}$ with $j\in J$ and $\supp(\mathbf{u})\subseteq J^{c}$ is $\binom{d}{1}\cdot\frac{1}{d}(1-\frac{1}{d})^{d-1}=(1-\frac{1}{d})^{d-1}\geq \frac{1}{e}$. Hence, 
\begin{align}\label{kk18left}
&\E_{J}\left(\sum_{\substack{\supp\left(\s\right)\subseteq J^{c}, |\supp\left(\s\right)|=d-1\\j\in J,\alpha \in\{1,2,3\}}}\widehat{T}\left(\s\oplus e_j^{\alpha}\right)^2\right)\\\nonumber
=&\left(1-\frac{1}{d}\right)^{d-1}\sum_{\substack{\s\in\{0,1,2,3\}^{n},\\|\supp(\s)|=d}}\widehat{T}(\s)^{2}\geq\left(\frac{1}{e}\right)\sum_{\substack{\s\in\{0,1,2,3\}^{n},\\|\supp(\s)|=d}}\widehat{T}(\s)^{2}.
\end{align}

Secondly, assume Proposition \ref{key lemma 1} holds and note that $x\mapsto x\log(\frac{1}{x})^{d}$ is a concave function on $(0,e^{d})$. Then it follows from the assumption $M(T)\leq e^{-2d}$ and the Jensen inequality that
\begin{equation}\label{kk18right}
\begin{aligned}
&\mathbb{E}_{J}\left[6\left(\frac{2e}{d}\right)^{d}\left(\sum_{j\in J}\|\dc_{j}(T)\|^{2}_{L_{1}}\right)\left(\log\left(\frac{1}{\sum_{j\in J}\|\dc_{j}(T)\|^{2}_{L_{1}}}\right)\right)^{d}\right]\\
\leq&6\left(\frac{2e}{d}\right)^{d}\left(\mathbb{E}_{J}\left(\sum_{j\in J}\|\dc_{j}(T)\|^{2}_{L_{1}}\right)\right)\left(\log\left(\frac{1}{\mathbb{E}_{J}(\sum_{j\in J}\|\dc_{j}(T)\|^{2}_{L_{1}})}\right)\right)^{d}\\
=&6\left(\frac{2e}{d}\right)^{d}\left(\frac{M\left(T\right)}{d}\right)\left(\log\left(\frac{d}{M\left(T\right)}\right)\right)^{d},
\end{aligned}
\end{equation}
where we used $\mathbb{E}_{J}(\mathds{1}_{J}(j))=\frac{1}{d}$ in the last equality. Hence, combing \eqref{kk18left}, \eqref{kk18right} with Proposition \ref{key lemma 1} yields the desired inequality.
\end{proof}
We conclude this subsection with the following noncommutative analogue of \cite[Theorem 3.4]{EKLM2022}.
\begin{theorem}\label{main spectral}
Suppose that $T\in \M_{2^{n}}$ is a projection. Then  
\begin{equation}\label{main spectral estimate 1}
\sum_{1\leq |\supp(\s)|\leq \frac{1}{10}\log(1/M(T))}\widehat{T}(\s)^{2}\leq 12e M(T)^{2/5}.
\end{equation}
\end{theorem}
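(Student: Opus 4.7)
The plan is to apply Theorem \ref{KK18_5} at each level $d$ and sum the resulting estimates, following the classical Keller--Kindler style argument adapted to the present noncommutative setting. First I would dispatch the trivial case: if $M(T) > e^{-10}$, then $\frac{1}{10}\log(1/M(T)) < 1$, so no integer $d\geq 1$ lies in the summation range and the left-hand side of \eqref{main spectral estimate 1} vanishes.

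Assume hereafter $M(T) \leq e^{-10}$, and set $L \coloneqq \log(1/M(T)) \geq 10$ and $D \coloneqq \lfloor L/10\rfloor \geq 1$. For each integer $d$ with $1 \leq d \leq D$, since $2d \leq L$ we have $M(T) = e^{-L} \leq e^{-2d}$, so Theorem \ref{KK18_5} applies. Using the crude estimate $\log(d/M(T)) = L + \log d \leq 2L$ (valid because $\log d \leq \log L \leq L$ on the relevant range), the theorem simplifies to
\[
\sum_{|\supp(\s)|=d}\widehat{T}(\s)^{2} \;\leq\; \frac{6e}{d}\, M(T)\left(\frac{4eL}{d}\right)^{d}.
\]

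The key step is to show that each level-$d$ contribution is dominated by $12e\cdot 2^{-d} M(T)^{2/5}$, so that the total is controlled by a geometric series. Taking logarithms, this amounts to verifying
\[
d\log\!\left(\frac{8eL}{d}\right) \;\leq\; \frac{3L}{5} + \log(2d), \qquad 1 \leq d \leq L/10.
\]
Setting $g(d) := d\log(8eL/d)$, one computes $g'(d) = \log(8L/d)$, which is positive on $[1,L/10]$ since $8L/d \geq 80$. Hence $g$ is increasing on this interval and attains its maximum at $d = L/10$, where $g(L/10) = (L/10)\log(80e)$. The numerical estimate $\log(80e) = \log 80 + 1 < 5.383 < 6$ then yields $g(L/10) < \tfrac{3L}{5}$, and the desired inequality follows for every admissible $d$.

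Summing the geometric majorant gives
\[
\sum_{1\leq|\supp(\s)|\leq L/10}\widehat{T}(\s)^{2} \;\leq\; \sum_{d=1}^{\infty} 12e\cdot 2^{-d}\, M(T)^{2/5} \;=\; 12e\, M(T)^{2/5},
\]
which is the claim. The principal obstacle is calibrating the exponent $2/5$ so that the numerical margin $\log(80e) < 6$ provides enough slack to absorb the crude bound $\log(d/M(T)) \leq 2L$; this is the precise point at which the geometric decay in $d$ just barely overtakes the factor $M(T)^{-3/5}$ that has to be controlled. A more delicate treatment of the logarithmic term could push the exponent up somewhat, but $2/5$ is a convenient round number with comfortable margin.
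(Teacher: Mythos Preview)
Your proof is correct and follows essentially the same route as the paper: both apply Theorem~\ref{KK18_5} level by level, use the crude bound $\log(d/M(T))\leq 2\log(1/M(T))$, and exploit that $d\mapsto (cL/d)^{d}$ is increasing on $[1,L/10]$ to control the sum by its top term. The only cosmetic difference is in the final summation: the paper factors out the supremum $(40e)^{L/10}\leq M(T)^{-1/2}$ and bounds the remaining harmonic sum $\sum_{l}1/l$ by $1+\log\log(1/M(T)^{1/10})$, whereas you absorb an extra factor of $2^{d}$ into the maximizer and prove the slightly stronger per-level bound $W_{=d}(T)\leq 12e\cdot 2^{-d}M(T)^{2/5}$, which then sums geometrically---a cleaner bookkeeping that sidesteps the $\log\log$ step entirely.
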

\begin{proof}
For the given projection $T$, recall here that
\[
W_{=l}(T)=\sum_{|\supp(\s)|=l}|\widehat{T}(\s)|^{2}.
\]
For $1\leq d\leq\frac{1}{10}\log(1/M(T))$, we have $M(T)\leq e^{-10d}$, and then it follows from Theorem \ref{KK18_5} that
\begin{equation}\label{main estimate 9}
\begin{split}
&W_{1\leq d\leq \frac{1}{10}\log(1/M(T))}(T)\\
=&\sum_{l=1}^{\left\lceil \frac{1}{10}\log(1/M(T))\right\rceil}W_{=l}(T)\\
\leq& 6eM(T)\sum_{l=1}^{\left\lceil \frac{1}{10}\log(1/M(T))\right\rceil}\frac{1}{l}\left(\frac{2e}{l}\right)^{l}\left(\log\left(\frac{l}{M(T)}\right)\right)^{l}\\
=&6eM(T)\sum_{l=1}^{\left\lceil \frac{1}{10}\log(1/M(T))\right\rceil}\frac{1}{l}\left(\frac{2e}{l}\right)^{l}\left(\log(l)+\log(1/M(T))\right)^{l}\\
\leq&6eM(T)\left(\frac{4e\log(1/M(T))}{l}\right)^{l}\left(\sum_{l=1}^{\left\lceil \frac{1}{10}\log(1/M(T))\right\rceil}\frac{1}{l}\right),
\end{split}
\end{equation}
where we used $\log(l)\leq \log(1/M(T))$ for $1\leq l\leq\frac{1}{10}\log(1/M(T))$ in the last inequality.

Note that the function $l\mapsto \left(\frac{4e\log(1/M(T))}{l}\right)^{l}$ is increasing for $l\in[1,4\log(1/M(T))]$. Hence, we estimate \eqref{main estimate 9} as follows
\begin{equation}\label{main estimate 10}
\begin{split}
W_{1\leq d\leq \frac{1}{10}\log(1/M(T))}(T)&\leq 6e M(T)(40e)^{\frac{1}{10}\log(1/M(T))}\sum_{l=1}^{\left\lceil \frac{1}{10}\log(1/M(T))\right\rceil}\frac{1}{l}\\
&\leq 6eM(T)^{1/2}\sum_{l=1}^{\left\lceil \frac{1}{10}\log(1/M(T))\right\rceil}\frac{1}{l}\\
&\leq 6e M(T)^{1/2}\left(\log\log\left(\frac{1}{M(T)^{1/10}}\right)+1\right)\\
&\leq 12e M(T)^{2/5},
\end{split}
\end{equation}
where we used $40e\leq e^{5}$ and $\sum_{l=1}^{ \left\lceil \frac{1}{10}\log(1/M(T))\right\rceil}\frac{1}{l}\leq \log\log\left(\frac{1}{M(T)^{1/10}}\right)+1$ in the second and third inequalities, respectively.
\end{proof}

\subsection{The quantum Talagrand-type isoperimetric inequality}
In this subsection, we apply the semigroup technique to derive Theorem \ref{quantum Ta}. The proof of the theorem relies on the following inequalities, which are of independent interest. The first key ingredient in our proof is the following quantum Buser-type inequality.
\begin{theorem}[Quantum Buser-type inequality]\label{quantum Buser inequality}
For each $t\geq 0$ and $1\leq p\leq 2$, the following inequality holds for every $T\in\MP$
\[
\left\|T-e^{-t\La}(T)\right\|_{L_{p}}\leq \sqrt{2t}\left\|\left(\sum_{j=1}^{n}|\dc_{j}(T)^{2}|\right)^{1/2}\right\|_{L_{p}}.
\]
\end{theorem}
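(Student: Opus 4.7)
The plan is to combine the semigroup integral identity
\[
T - e^{-t\La}T \;=\; \int_0^t \La e^{-s\La}T\,ds
\]
with the Bakry-Emery-type gradient estimate of Proposition \ref{curvature condition}(iii).

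For $p=2$ the bound is essentially immediate: working in the Pauli-Fourier basis, Parseval together with the elementary inequality $(1-e^{-y})^{2}\leq 2y$ for $y\geq 0$ (applied modewise with $y=t|\supp(\s)|$) gives
\[
\|T-e^{-t\La}T\|_{L_{2}}^{2}=\sum_{\s}\bigl(1-e^{-t|\supp(\s)|}\bigr)^{2}|\widehat T(\s)|^{2}\leq 2t\sum_{\s}|\supp(\s)|\,|\widehat T(\s)|^{2}=2t\,\Bigl\|\Bigl(\sum_{j=1}^{n}|\dc_{j}T|^{2}\Bigr)^{1/2}\Bigr\|_{L_{2}}^{2}.
\]

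For $p\in[1,2)$ I would proceed by duality. For $R\in\M_{2^{n}}$ with $\|R\|_{L_{q}}\leq 1$ (with $q=p/(p-1)\in(2,\infty]$), using that each $\dc_{j}$ is a self-adjoint projection commuting with the semigroup and applying the symmetric splitting $e^{-s\La}=e^{-s\La/2}\cdot e^{-s\La/2}$, integration by parts inside the trace yields
\[
\tr\bigl(R^{*}(T-e^{-t\La}T)\bigr)=\int_{0}^{t}\sum_{j=1}^{n}\tr\bigl((\dc_{j}e^{-s\La/2}R)^{*}(\dc_{j}e^{-s\La/2}T)\bigr)\,ds.
\]
The noncommutative Cauchy-Schwarz inequality for the column $L_{p}(\ell^{2}_{c})$-pairing then converts the $j$-sum into a product of two column-gradient norms. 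Proposition \ref{curvature condition}(iii) contributes a factor $e^{-s/2}$ on the $T$-side, and the Jensen/Kadison-Schwarz contraction $\|e^{-s\La/2}(Y)\|_{L_{p/2}}\leq\|Y\|_{L_{p/2}}$ for $Y\geq 0$ (valid for $p/2\in[1/2,1]$ since $e^{-s\La/2}$ is positive, unital and trace-preserving) reduces the resulting $T$-column-gradient back to $\|(\sum_{j}|\dc_{j}T|^{2})^{1/2}\|_{L_{p}}$.

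The main obstacle is extracting the $\sqrt{t}$ scaling on the $R$-side, since the test element carries no intrinsic gradient control. I would handle this by invoking the noncommutative Riesz transform equivalence $\|(\sum_{j}|\dc_{j}R|^{2})^{1/2}\|_{L_{q}}\approx\|\La^{1/2}R\|_{L_{q}}$ (standard for $1<q<\infty$) together with the semigroup smoothing $\|\La^{1/2}e^{-s\La/2}\|_{L_{q}\to L_{q}}\lesssim s^{-1/2}$, so that $\|(\sum_{j}|\dc_{j}e^{-s\La/2}R|^{2})^{1/2}\|_{L_{q}}\lesssim s^{-1/2}\|R\|_{L_{q}}$. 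Integrating then yields $\int_{0}^{t}s^{-1/2}e^{-s/2}\,ds\leq 2\sqrt{t}$, producing the desired $O(\sqrt{t})$ bound. The sharp constant $\sqrt{2t}$ can then be recovered either by tracking constants carefully through the splitting, or by complex interpolation between the $p=2$ bound obtained above and an independent $L_{1}$ treatment (for instance via a random-restriction/coarea argument in the spirit of the techniques developed in Section \ref{quantum random-restriction}); the endpoint $p=1$ is the most delicate and may require a weak-type Riesz transform or a direct geometric argument.
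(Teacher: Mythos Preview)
Your symmetric splitting creates a genuine gap. The step ``Jensen/Kadison--Schwarz contraction $\|e^{-s\La/2}(Y)\|_{L_{p/2}}\leq\|Y\|_{L_{p/2}}$ for $Y\geq 0$, valid for $p/2\in[1/2,1]$'' is false: for a unital positive trace-preserving map $\Phi$ and $0<r<1$, operator concavity of $t\mapsto t^{r}$ gives $\Phi(Y)^{r}\geq \Phi(Y^{r})$, hence $\tr(\Phi(Y)^{r})\geq \tr(Y^{r})$, i.e.\ the inequality goes the \emph{wrong} way. So after applying Proposition~\ref{curvature condition}(iii) on the $T$-side you cannot get back to $\bigl\|\bigl(\sum_{j}|\dc_{j}T|^{2}\bigr)^{1/2}\bigr\|_{L_{p}}$ when $p<2$. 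Your fallback via noncommutative Riesz transforms is overkill, loses the sharp constant, and leaves the endpoint $p=1$ unresolved.

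The fix is to split \emph{asymmetrically}: move the entire semigroup to the dual element. Using $\dc_{j}^{2}=\dc_{j}$ and self-adjointness of $\dc_{j}$ and $e^{-s\La}$,
\[
\tr\bigl(u\,\dc_{j}e^{-s\La}(T)\bigr)=\tr\bigl((\dc_{j}e^{-s\La}u)\,\dc_{j}(T)\bigr),
\]
so the $T$-side is simply $\dc_{j}(T)$ with no semigroup to absorb. Column Cauchy--Schwarz then produces the factor $\bigl\|\bigl(\sum_{j}|\dc_{j}e^{-s\La}(u^{*})|^{2}\bigr)^{1/2}\bigr\|_{L_{p'}}$, and here $p'\geq 2$, so $p'/2\geq 1$ and the contraction $\|e^{-s\La}(|u|^{2})\|_{L_{p'/2}}\leq\|u\|_{L_{p'}}^{2}$ is legitimate. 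Combined with the local reverse Poincar\'e inequality $(e^{2s}-1)\sum_{j}|\dc_{j}e^{-s\La}u|^{2}\leq e^{-s\La}(|u|^{2})$ (which follows from Proposition~\ref{curvature condition} exactly as in the classical Bakry--\'Emery argument), this gives the clean gradient estimate
\[
\Bigl\|\Bigl(\sum_{j}|\dc_{j}e^{-s\La}u|^{2}\Bigr)^{1/2}\Bigr\|_{L_{p'}}\leq \frac{1}{\sqrt{2s}}\|u\|_{L_{p'}},
\]
and integrating $\int_{0}^{t}(2s)^{-1/2}\,ds=\sqrt{2t}$ yields the sharp constant directly, for all $1\leq p\leq 2$ at once, with no Riesz transforms or interpolation needed.
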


To establish Theorem \ref{quantum Buser inequality}, we need the following sequence of lemmas, which are noncommutative analogies to their commutative correspondences.

\begin{theorem}[Local Reverse Poincar\'e inequality]\label{reverse local Poincare inequality}
For each $T\in \MP$, we have
\begin{equation*}
(e^{2t}-1)\sum\limits_{j=1}^{n}|e^{-t\La}\dc_{j}(T)|^{2}\leq e^{-t\La}(|T|^{2})-|e^{-t\La}(T)|^{2}.
\end{equation*}
\end{theorem}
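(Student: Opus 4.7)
The plan is to mimic the interpolation argument used in the proof of Proposition~\ref{curvature condition}(ii), but now applied to the full square $|e^{-s\La}(T)|^{2}$ rather than to a single partial derivative. Specifically, I would introduce the auxiliary function
\[
\Psi(s)\coloneqq e^{-(t-s)\La}\bigl(|e^{-s\La}(T)|^{2}\bigr),\qquad s\in[0,t],
\]
so that the left--hand object of the claim equals $\Psi(0)-\Psi(t)$ after differentiation. Taking the derivative and using the product rule together with $\frac{d}{ds}e^{-s\La}(T)=-\La e^{-s\La}(T)$, the three terms assemble into the carré--du--champ combination $\La(X^{*}X)-(\La X)^{*}X-X^{*}\La X$ with $X=e^{-s\La}(T)$. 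By Proposition~\ref{curvature condition}(i), this equals $-2\sum_{j=1}^{n}|\dc_{j}(X)|^{2}$, so
\[
\Psi'(s)=-2\,e^{-(t-s)\La}\!\left[\sum_{j=1}^{n}\bigl|\dc_{j}e^{-s\La}(T)\bigr|^{2}\right].
\]
Consequently,
\[
e^{-t\La}(|T|^{2})-|e^{-t\La}(T)|^{2}=\Psi(0)-\Psi(t)=2\int_{0}^{t}e^{-(t-s)\La}\!\left[\sum_{j=1}^{n}\bigl|\dc_{j}e^{-s\La}(T)\bigr|^{2}\right]ds.
\]

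The crucial step is then to produce a pointwise lower bound on the integrand in terms of $\sum_{j}|\dc_{j}e^{-t\La}(T)|^{2}$. This is where I would invoke Proposition~\ref{curvature condition}(iii) \emph{read backwards}: applying it to the element $e^{-s\La}(T)$ with time parameter $t-s$ yields
\[
\sum_{j=1}^{n}\bigl|\dc_{j}e^{-(t-s)\La}\bigl(e^{-s\La}(T)\bigr)\bigr|^{2}\leq e^{-2(t-s)}\,e^{-(t-s)\La}\!\left[\sum_{j=1}^{n}\bigl|\dc_{j}e^{-s\La}(T)\bigr|^{2}\right],
\]
and since $e^{-(t-s)\La}e^{-s\La}=e^{-t\La}$, the left--hand side is exactly $\sum_{j}|\dc_{j}e^{-t\La}(T)|^{2}$. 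Rearranging,
\[
e^{-(t-s)\La}\!\left[\sum_{j=1}^{n}\bigl|\dc_{j}e^{-s\La}(T)\bigr|^{2}\right]\ \geq\ e^{2(t-s)}\sum_{j=1}^{n}\bigl|\dc_{j}e^{-t\La}(T)\bigr|^{2}.
\]

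Inserting this lower bound into the integral representation of $\Psi(0)-\Psi(t)$ and pulling the (time--independent) quantity $\sum_{j}|\dc_{j}e^{-t\La}(T)|^{2}$ outside, I would obtain
\[
e^{-t\La}(|T|^{2})-|e^{-t\La}(T)|^{2}\geq 2\sum_{j=1}^{n}\bigl|\dc_{j}e^{-t\La}(T)\bigr|^{2}\int_{0}^{t}e^{2(t-s)}\,ds=(e^{2t}-1)\sum_{j=1}^{n}\bigl|\dc_{j}e^{-t\La}(T)\bigr|^{2},
\]
which is the desired inequality. The main conceptual point — and the only place one has to be careful — is the direction in which Proposition~\ref{curvature condition}(iii) is used: the statement there is an upper bound on $\sum_{j}|\dc_{j}e^{-t\La}(T)|^{2}$, and it is precisely this upper bound that, after rearrangement, supplies the \emph{lower} bound on the integrand needed to convert a carré--du--champ identity into a local reverse Poincaré estimate. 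Everything else is a routine Gr\"onwall/integration manipulation analogous to the one already carried out in the proof of Proposition~\ref{curvature condition}(ii).
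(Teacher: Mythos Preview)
Your proposal is correct and is essentially the paper's own proof: the interpolation function $\Psi(s)=e^{-(t-s)\La}(|e^{-s\La}(T)|^{2})$ differs from the paper's $s\mapsto e^{-s\La}(|e^{-(t-s)\La}(T)|^{2})$ only by the change of variable $s\leftrightarrow t-s$, and both arguments invoke Proposition~\ref{curvature condition}(i) to obtain the carr\'e--du--champ identity and Proposition~\ref{curvature condition}(iii) (rearranged) to bound the integrand from below, followed by the elementary integration $2\int_{0}^{t}e^{2u}\,du=e^{2t}-1$.
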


\begin{proof}
On the one hand, applying Proposition \ref{curvature condition} (iii), we get
\begin{equation}\label{curvature condition 1}
e^{-s\La}\left[\sum\limits_{j=1}^{n}\left|\dc_{j}e^{-(t-s)\La}(T)\right|^{2}\right]\geq e^{2s}\sum\limits_{j=1}^{n}|\dc_{j}e^{-t\La}(T)|^{2},
\end{equation}
for every $0\leq s\leq t$.

On the other hand,
\begin{equation}\label{curvature condition 2}
\begin{split}
  &e^{-t\La}(T^{*}T)-(e^{-t\La}(T))^{*}(e^{-t\La}(T))\\
=&\int_{0}^{t}\frac{\partial}{\partial s}e^{-s\La}\left[\left(e^{-(t-s)\La}(T)\right)^{*}\left(e^{-(t-s)\La}(T)\right)\right]~ds\\
=&\int_{0}^{t}-\La e^{-s\La}\left(\left|e^{-(t-s)\La}(T)\right|^{2}\right)+e^{-s\La}\left(\left(e^{-(t-s)\La}(T)\right)^{*}\La e^{-(t-s)\La}(T)\right)\\
&\quad+ e^{-s\La}\left(\left(\La e^{-(t-s)\La}(T)\right)^{*}e^{-(t-s)\La}(T)\right)~ds\\
=&\int_{0}^{t}e^{-s\La}\Big{[}-\La\left( |e^{-(t-s)\La}(T)|^{2}\right)+e^{-(t-s)\La}(T)\left(\La e^{-(t-s)\La}(T)\right)^{*}\\
&\quad +\left(\La e^{-(t-s)\La}(T)\right)^{*} e^{-(t-s)\La}(T)\Big{]}~ds\\
=&2\int_{0}^{t}e^{-s\La}\left(\sum\limits_{j=1}^{n}\left|\dc_{j} e^{-(t-s)\La}(T)\right|^{2}\right)~ds,
\end{split}
\end{equation}
where the last line follows from Proposition \ref{curvature condition} (i).

Combining \eqref{curvature condition 1} and \eqref{curvature condition 2} yields that
\begin{equation*}
\begin{split}
e^{-t\La}(|T|^{2})-|e^{-t\La}(T)|^{2}=&2\int_{0}^{t}e^{-s\La}\left(\sum\limits_{j=1}^{n}\left|\dc_{j}e^{-(t-s)\La}(T)\right|^{2}\right)~ds\\
\geq&2\int_{0}^{t}e^{2s}\sum\limits_{j=1}^{n}\left|\dc_{j}e^{-t\La}(T)\right|^{2}~ds\\
=&\sum\limits_{j=1}^{n}\left|\dc_{j}e^{-t\La}(T)\right|^{2}\int_{0}^{2t}e^{s}~ds\\
=&(e^{2t}-1)\sum\limits_{j=1}^{n}\left|\dc_{j}e^{-t\La}(T)\right|^{2},
\end{split}
\end{equation*}
which is the desired inequality.
\end{proof}

Thanks to the local reverse Poincar\'e inequality (i.e., Theorem \ref{reverse local Poincare inequality}), we obtain the following \emph{gradient estimate} that is dual to the quantum Byser-type inequality.

\begin{lemma}\label{dual Buser inequality}
For each $T\in \MP$ and $2\leq p\leq \infty$, we have
\[
\left\|\left(\sum\limits_{j=1}^{n}\left|\dc_{j}e^{-t\La}(T)\right|^{2}\right)^{1/2}\right\|_{L_{p}}\leq \frac{1}{\sqrt{2t}}\|T\|_{L_{p}},~t\geq 0.
\]
\end{lemma}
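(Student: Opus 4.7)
The plan is to derive the gradient estimate directly from the local reverse Poincar\'e inequality (Theorem \ref{reverse local Poincare inequality}), reducing the problem to a contractivity property of the semigroup $\{e^{-t\La}\}_{t\geq 0}$ on $L_{p/2}(\MP)$.

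First, I would start from Theorem \ref{reverse local Poincare inequality}, keeping in mind that $\dc_{j}$ commutes with $e^{-t\La}$ (both operators are diagonal in the Pauli basis), and simply drop the nonnegative term $|e^{-t\La}(T)|^{2}$ on the right-hand side. This yields the operator inequality
\[
\sum_{j=1}^{n}\bigl|\dc_{j}e^{-t\La}(T)\bigr|^{2}\leq \frac{1}{e^{2t}-1}\,e^{-t\La}\bigl(|T|^{2}\bigr).
\]
Since the map $x\mapsto x^{1/2}$ is operator monotone on the positive cone, I can take the square root on both sides to get
\[
\left(\sum_{j=1}^{n}\bigl|\dc_{j}e^{-t\La}(T)\bigr|^{2}\right)^{1/2}\leq \frac{1}{\sqrt{e^{2t}-1}}\,\bigl(e^{-t\La}(|T|^{2})\bigr)^{1/2}.
\]

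Next, because the $L_{p}$-norm is monotone on positive operators (the eigenvalues of $A\leq B$ satisfy $\lambda_{k}(A)\leq \lambda_{k}(B)$ by min-max), I take the $L_{p}$-norm of both sides and then use the identity $\|X^{1/2}\|_{L_{p}}=\|X\|_{L_{p/2}}^{1/2}$ for positive $X$. This reduces the estimate to controlling $\|e^{-t\La}(|T|^{2})\|_{L_{p/2}}$. Here the hypothesis $p\geq 2$ is essential: it guarantees $p/2\geq 1$, so that the quantum Markov semigroup $e^{-t\La}$, being unital and trace preserving, is a contraction on $L_{p/2}(\MP)$ by interpolation between $L_{1}$ and $L_{\infty}$. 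Therefore
\[
\bigl\|e^{-t\La}(|T|^{2})\bigr\|_{L_{p/2}}\leq \bigl\||T|^{2}\bigr\|_{L_{p/2}}=\|T\|_{L_{p}}^{2}.
\]

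Finally, combining these steps gives
\[
\left\|\left(\sum_{j=1}^{n}\bigl|\dc_{j}e^{-t\La}(T)\bigr|^{2}\right)^{1/2}\right\|_{L_{p}}\leq \frac{\|T\|_{L_{p}}}{\sqrt{e^{2t}-1}},
\]
and the desired inequality follows from the elementary bound $e^{2t}-1\geq 2t$ for $t\geq 0$. The only conceptual subtlety is the use of $p\geq 2$ when passing from $|T|^{2}$ to $\|T\|_{L_{p}}^{2}$ via contractivity on $L_{p/2}$; this is the step that dictates the range of $p$ in the statement. No real obstacle is expected, as each step reduces to standard facts (operator monotonicity of $\sqrt{\cdot}$, monotonicity of $L_{p}$-norms on positives, and $L_{q}$-contractivity of the quantum Markov semigroup).
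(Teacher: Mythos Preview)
Your proposal is correct and follows essentially the same route as the paper: start from the local reverse Poincar\'e inequality, drop the nonnegative term $|e^{-t\La}(T)|^{2}$, pass to the square root, and then use the $L_{p/2}$-contractivity of the semigroup (valid since $p\ge 2$); the paper applies the elementary bound $e^{2t}-1\geq 2t$ at the start rather than at the end, but this is immaterial. Your remark that $\dc_{j}$ commutes with $e^{-t\La}$ is exactly what reconciles the form $|e^{-t\La}\dc_{j}(T)|^{2}$ in Theorem~\ref{reverse local Poincare inequality} with the form $|\dc_{j}e^{-t\La}(T)|^{2}$ in the lemma, and the paper leaves this implicit.
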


\begin{proof}
Since $e^{2t}-1\geq 2t$, it follows from the reverse local Poincar\'e inequality (i.e., Theorem \ref{reverse local Poincare inequality}) that for each $t\geq 0$ we have
\begin{equation*}\label{reverse local Poincare inequality 1}
2t\sum\limits_{j=1}^{n}|\dc_{j}e^{-t\La}(T)|^2\leq e^{-t\La}(|T|^2)-|e^{-t\La}(T)|^2\leq e^{-t\La}(|T|^2),
\end{equation*}
which further implies
\[
\left(\sum\limits_{j=1}^{n}\left|\dc_{j}e^{-t\La}(T)\right|^{2}\right)^{1/2}\leq \frac{1}{\sqrt{2t}}\left(e^{-t\La}(T^{*}T)\right)^{1/2}.
\]
Therefore, we have 
\begin{align*}
\left\|\left(\sum\limits_{j=1}^{n}\left|\dc_{j}e^{-t\La}(T)\right|^{2}\right)^{1/2}\right\|_{L_{p}}
&\leq\frac{1}{\sqrt{2t}}\left\|e^{-t\La}(|T|^{2})\right\|^{1/2}_{L_{p/2}}\leq \frac{1}{\sqrt{2t}}\|T\|_{L_{p}},
\end{align*}
where the last inequality is due to the fact that $p/2\geq 1$ and
\begin{equation*}\label{dual Buser 2}
\left\|e^{-t\La}(|T|^{2})\right\|^{1/2}_{L_{p/2}}\leq \left\||T|^{2}\right\|^{1/2}_{L_{p/2}}=\|T\|_{L_{p}}.
\end{equation*}
\end{proof}
%Thanks to the noncommutative Khintchine inequality, Lemma \ref{dual Buser inequality} can be reformulated as follows.
%
%\begin{lemma}\label{dual Buser inequality 1}
%For each $x\in \MP$ and $2\leq p\leq \infty$, we have
%\begin{equation}
%\left\|\nabla\left(e^{-t\La}(x)\right)\right\|_{L_{p}}\leq \frac{1}{\sqrt{2t}}\|x\|_{L_{p}}.
%\end{equation}
%\end{lemma}
We now prove the quantum Buser-type inequality via duality.
\begin{proof}[Proof of Theorem \ref{quantum Buser inequality}]
Since, for each $t\geq 0$,
\[
T-e^{-t\La}(T)=-\int_{0}^{t}\frac{\partial}{\partial s}e^{-s\La}(T)~ds=\sum\limits_{j=1}^{n}\int_{0}^{t}\dc_{j} e^{-s\La}(T)~ds.
\]
Hence, for every $1\leq p\leq 2$, there exists $u\in L_{p^{\prime}}$ with $\|u\|_{L_{p^{\prime}}}=1$, $\frac{1}{p}+\frac{1}{p^{\prime}}=1$, and
\begin{equation*}\label{Buser-type inequality 1}
\begin{split}
\left\|T-e^{-t\La}(T)\right\|_{L_{p}}&=\tr\left(u\cdot\left(\sum\limits_{j=1}^{n}\int_{0}^{t}\dc_{j} e^{-s\La}(T)~ds\right)\right)\\
&=\int_{0}^{t}\sum\limits_{j=1}^{n}\tr\left(u\cdot\left(\dc_{j}e^{-s\La}(T)\right)\right)~ds\\
&=\int_{0}^{t}\sum\limits_{j=1}^{n}\tr\left(\left(\dc_{j}e^{-s\La}(u)\right)\cdot(\dc_{j}(T))\right)~ds\\
&\leq \int_{0}^{t}\left\|\left(\sum_{j=1}^{n}|\dc_{j}e^{-s\La}(u^{*})|^{2}\right)^{1/2}\right\|_{L_{p^{\prime}}}\cdot\left\|\left(\sum_{j=1}^{n}|\dc_{j}(T)^{2}|\right)^{1/2}\right\|_{L_{p}}ds.
\end{split}
\end{equation*}
Thanks to Lemma \ref{dual Buser inequality}, we obtain that
%\begin{equation}\label{Buser-type inequality 2}
%\begin{split}
%\left\|x-e^{-t\La}(x)\right\|_{L_{p}}\leq \int_{0}^{t}\left\|\sum\limits_{j=1}^{n}\varepsilon_{j}\dc_{j}e^{-s\La}(u)\right\|_{L_{p^{\prime}}}\left\|\sum\limits_{j=1}^{n}\varepsilon_{j}\dc_{j}(x)\right\|_{L_{p}}~ds.
%\end{split}
%\end{equation}
%Applying Lemma \ref{dual Buser inequality 1} and the noncommutative Khintchine inequality (i.e., Theorem \ref{noncommutative Kintchine inequality}) to \eqref{Buser-type inequality 2}, we obtian
\begin{equation*}\label{Buser-type inequality 3}
\begin{split}
\left\|T-e^{-t\La}(T)\right\|_{L_{p}}&\leq \int_{0}^{t}\frac{1}{\sqrt{2s}}ds~\|u\|_{L_{p^{\prime}}}\left\|\left(\sum_{j=1}^{n}|\dc_{j}(T)^{2}|\right)^{1/2}\right\|_{L_{p}}\\
&\leq \sqrt{2t}\left\|\left(\sum_{j=1}^{n}|\dc_{j}(T)^{2}|\right)^{1/2}\right\|_{L_{p}}.
\end{split}
\end{equation*}
\end{proof}

Before turning to the Talagrand-type inequality, we show the following two elementary lemmas.

\begin{lemma}\label{fundamental identity}
For each  projection $T\in \MP$ and $t\geq 0$, we have
\[
\var(T)\leq \left\|T-e^{-t\La}(T)\right\|_{L_{1}}+\var\left(e^{-t\La/2}(T)\right).
\]
\end{lemma}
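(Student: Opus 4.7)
The plan is to reduce the stated inequality to a direct Hölder estimate by exploiting two properties of the quantum Ornstein--Uhlenbeck semigroup: trace-preservation and $L_2$-self-adjointness. Concretely, since $\La(\bo)=0$, we have $e^{-t\La/2}(\bo)=\bo$, and, because $e^{-t\La/2}$ is self-adjoint on $L_2(\MP)$ (as $\La=\sum_{j=1}^n\dc_j$ is a sum of self-adjoint orthogonal projections), this yields $\tr(e^{-t\La/2}(T))=\tr(T)$. Therefore
\[
\var(T)-\var\bigl(e^{-t\La/2}(T)\bigr)=\|T\|_{L_2}^2-\|e^{-t\La/2}(T)\|_{L_2}^2.
\]

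Next, I would rewrite the right-hand side of this identity using self-adjointness of $e^{-t\La/2}$ and the semigroup law $e^{-t\La/2}\circ e^{-t\La/2}=e^{-t\La}$, namely
\[
\|e^{-t\La/2}(T)\|_{L_2}^2=\tr\!\bigl(T^{*}\,e^{-t\La}(T)\bigr),
\]
so that
\[
\var(T)-\var\bigl(e^{-t\La/2}(T)\bigr)=\tr\!\bigl(T^{*}\bigl(T-e^{-t\La}(T)\bigr)\bigr).
\]
At this stage, applying the tracial H\"older inequality together with $\|T\|_{L_\infty}\le 1$ (which holds because $T$ is a projection) gives
\[
\tr\!\bigl(T^{*}\bigl(T-e^{-t\La}(T)\bigr)\bigr)\le\|T\|_{L_\infty}\,\bigl\|T-e^{-t\La}(T)\bigr\|_{L_1}\le\bigl\|T-e^{-t\La}(T)\bigr\|_{L_1},
\]
and rearranging produces the desired bound.

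There is no real obstacle here; the argument is essentially a one-line algebraic manipulation once one recognizes that the hypothesis that $T$ is a projection is used only to guarantee the operator-norm bound $\|T\|_{L_\infty}\le 1$. In particular, the statement actually holds more generally for any self-adjoint contraction $T\in\MP$, which may be useful to keep in mind when iterating this inequality (as will presumably be done in combination with Theorem \ref{quantum Buser inequality} to derive Theorem \ref{quantum Ta}).
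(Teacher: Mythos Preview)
Your proof is correct and follows essentially the same approach as the paper's: both compute $\var(T)-\var(e^{-t\La/2}(T))=\tr\bigl(T^{*}(T-e^{-t\La}(T))\bigr)$ via trace-preservation and self-adjointness of the semigroup, then bound this trace by $\|T-e^{-t\La}(T)\|_{L_1}$ using $\|T\|_{L_\infty}\le 1$. The only cosmetic difference is that the paper invokes $T^2=T$ explicitly to write $\var(T)=\tr(T)-\tr(T)^2$, whereas your formulation avoids this and thereby makes transparent the slight generalization to arbitrary contractions that you note at the end.
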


\begin{proof}
Let $T\in \MP$ be a projection. It follows from the trace preserving of $\{e^{-t\La}\}_{t\geq 0}$ that
\[
\var\left(e^{-t\La/2}(T)\right)=\tr(T\cdot e^{-t\La}(T))-\tr(T)^{2},~\forall~t\geq 0.
\] 
Note that $\var(T)=\tr(T)-\tr(T)^{2}$. Therefore, we have
\begin{align*}\label{elementary estimate 1}
\var(T)-\var\left(e^{-t\La/2}(T)\right)&=\tr(T-T\cdot e^{-t\La}(T))\\
&=\tr\left(T(T- e^{-t\La}(T))\right)\\
&\leq\left\|T(T-e^{-t\La}(T))\right\|_{L_{1}}\\
&\leq \|T-e^{-t\La}(T)\|_{L_{1}}.
\end{align*}
The desired assertion follows.
\end{proof}

\begin{lemma}\label{high degree}
Let $T\in \MP$ be a projection. For each $d\in \mathbb{N}$, we have
\begin{equation*}\label{key estimate 9}
\left\|\left(\sum_{j=1}^{n}|\dc_{j}(T)|^{2}\right)^{1/2}\right\|_{L_{1}}\geq \frac{1}{4}\sqrt{d}\,W_{\geq d}(T).
\end{equation*}
\end{lemma}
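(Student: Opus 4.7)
The plan is to combine the quantum Buser-type inequality (Theorem~\ref{quantum Buser inequality}) with a straightforward $L_1$--$L_2$--$L_\infty$ interpolation, exploiting the hypothesis that $T$ is a projection through an $L_\infty$ bound on $T - e^{-t\La}(T)$.

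First, I would apply Theorem~\ref{quantum Buser inequality} at $p=1$ to obtain, for any $t > 0$,
\[
\bigl\|T - e^{-t\La}(T)\bigr\|_{L_1} \le \sqrt{2t}\,\left\|\left(\sum_{j=1}^n |\dc_j(T)|^2\right)^{1/2}\right\|_{L_1}.
\]
Next, I would lower-bound the left-hand side. Since $T$ is a projection, $0 \le T \le \mathbf{1}$; and the quantum Ornstein--Uhlenbeck semigroup $e^{-t\La}$ is unital and completely positive (indeed a tensor power of $A \mapsto e^{-t}A + (1-e^{-t})\tr(A)\mathbf{1}_2$), so $0 \le e^{-t\La}(T) \le \mathbf{1}$. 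Hence the self-adjoint element $A \coloneqq T - e^{-t\La}(T)$ satisfies $-\mathbf{1} \le A \le \mathbf{1}$, giving $\|A\|_{L_\infty} \le 1$. Combining with the tracial H\"older bound $\|A\|_{L_2}^2 = \tr(A\cdot A) \le \|A\|_{L_1}\,\|A\|_{L_\infty}$ yields
\[
\bigl\|T - e^{-t\La}(T)\bigr\|_{L_1} \ge \bigl\|T - e^{-t\La}(T)\bigr\|_{L_2}^2.
\]

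Third, using the Fourier expansion of $e^{-t\La}$ from Section~\ref{preliminaries},
\[
\bigl\|T - e^{-t\La}(T)\bigr\|_{L_2}^2 = \sum_{\s \in \{0,1,2,3\}^n}\bigl(1 - e^{-t|\supp(\s)|}\bigr)^2|\widehat{T}(\s)|^2 \ge (1-e^{-td})^2\,W_{\ge d}(T).
\]
Chaining the three displays and choosing $t = 1/d$ gives
\[
\left\|\left(\sum_{j=1}^n |\dc_j(T)|^2\right)^{1/2}\right\|_{L_1} \ge \frac{(1-e^{-td})^2}{\sqrt{2t}}\,W_{\ge d}(T) = \frac{(1-1/e)^2}{\sqrt{2}}\,\sqrt{d}\,W_{\ge d}(T),
\]
and the numerical estimate $(1-1/e)^2/\sqrt{2} \approx 0.283 > 1/4$ delivers the claimed constant $\tfrac{1}{4}\sqrt{d}$.

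I do not anticipate a serious obstacle: the Buser inequality does all the analytic heavy lifting, and the Fourier identity is routine. The only step that genuinely uses the projection hypothesis is the $L_\infty$ bound $\|T - e^{-t\La}(T)\|_{L_\infty} \le 1$; without it one would lose a multiplicative factor and obtain a weaker constant, so this is the point where the argument is tight and where the restriction to projections is essential.
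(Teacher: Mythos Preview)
Your argument is correct and reaches the stated constant $\tfrac14\sqrt d$, but it takes a slightly different path from the paper's own proof. The paper invokes Lemma~\ref{fundamental identity}, which for a projection $T$ reads
\[
\bigl\|T-e^{-t\La}(T)\bigr\|_{L_{1}}\;\ge\;\var(T)-\var\!\bigl(e^{-t\La/2}(T)\bigr)
=\sum_{\s}(1-e^{-t|\supp(\s)|})\,|\widehat T(\s)|^{2},
\]
i.e.\ a \emph{linear} loss in the Fourier multiplier, before applying the Buser inequality. You replace Lemma~\ref{fundamental identity} by the H\"older bound $\|A\|_{L_1}\ge \|A\|_{L_2}^{2}/\|A\|_{L_\infty}$ together with $\|T-e^{-t\La}(T)\|_{L_\infty}\le 1$, which gives the \emph{quadratic} loss $(1-e^{-t|\supp(\s)|})^{2}$ in Fourier. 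Both routes exploit the projection hypothesis---the paper through $T^{2}=T$ inside the trace identity, yours through the order bound $0\le T\le \mathbf 1$---and both land above the threshold $1/4$ (the paper's constant is $(1-e^{-1})/\sqrt 2\approx 0.447$ while yours is $(1-e^{-1})^{2}/\sqrt 2\approx 0.283$). Your version is a touch more self-contained, since it avoids the auxiliary Lemma~\ref{fundamental identity}; the paper's version is slightly sharper and would survive a smaller numerical margin.
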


\begin{proof}
Fix  $d\in \mathbb{N}$. Applying Lemma \ref{fundamental identity} (with $t=\frac{1}{d}$), we get
\begin{align*}
\left\|T-e^{-\frac{\La}{d}}(T)\right\|_{L_{1}}&\geq \var(T)-\var\left(e^{-\frac{\La}{2d}}(T)\right)\\
&=\sum_{|\supp(\s)|\geq 1}(1-e^{-\frac{|\supp(\s)|}{d}})\widehat{T}(\s)^{2}\\
&\geq\sum_{|\supp(\s)|\geq d}(1-e^{-\frac{|\supp(s)|}{d}})\widehat{T}(\s)^{2}\\
&\geq(1-e^{-1})W_{\geq d}(T)\geq \frac{1}{2}W_{\geq d}(T).
\end{align*}
By Theorem \ref{quantum Buser inequality}, we have  
\[
\left\|\left(\sum_{j=1}^{n}|\dc_{j}(T)|^{2}\right)^{1/2}\right\|_{L_{1}}\geq \sqrt{\frac{d}{2}}\left\|T-e^{-\frac{\La}{d}}(T)\right\|_{L_{1}} \geq \frac{\sqrt{d}}{4}W_{\geq d}(T).
\]
\end{proof}

Before providing the proof of the quantum Talagrand-type isoperimetric inequality, i.e., Theorem \ref{quantum Ta}, we derive the following moment comparison lemma via hypercontractivity of the semigroup $\{e^{-t\mathcal{L}}\}_{t\geq 0}$ (see \cite[Theorem 46]{MO2010}).
\begin{lemma}[moment comparison]\label{lemmaHC}
Let $T\in \M_{2^{n}}$ with degree at most $k$. For every $r\geq 2$, we have
\[
\|T\|_{L_{r}}\leq\left(r-1\right)^{k/2}\|T\|_{L_{2}}.
\]
\end{lemma}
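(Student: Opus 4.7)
The plan is to leverage the optimal hypercontractivity estimate \eqref{MO hypercontractivity} of the quantum Ornstein-Uhlenbeck semigroup in reverse: since $T$ has only finitely many nonzero Fourier coefficients (all supported on $|\supp(\mathbf{s})|\leq k$), the backward flow $e^{+t\mathcal{L}}$ is well-defined on $T$, and the $L_2$-norm inflation caused by this backward flow is controlled by the degree bound.

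Concretely, choose $t \coloneqq \frac{1}{2}\log(r-1)$, so that \eqref{MO hypercontractivity} applied with $p=2$ and $q=r$ gives $\|e^{-t\mathcal{L}}\|_{L_2\to L_r}\leq 1$. Expanding $T = \sum_{|\supp(\mathbf{s})|\leq k}\widehat{T}(\mathbf{s})\sigma_{\mathbf{s}}$, I introduce
\[
U \coloneqq \sum_{|\supp(\mathbf{s})|\leq k} e^{t|\supp(\mathbf{s})|}\widehat{T}(\mathbf{s})\sigma_{\mathbf{s}} \in \M_{2^n},
\]
which is a finite sum, hence a bona fide element of $\M_{2^n}$. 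From the Fourier description of the semigroup I obtain $e^{-t\mathcal{L}}(U)=T$, while orthonormality of the Pauli basis $\{\sigma_{\mathbf{s}}\}$ yields
\[
\|U\|_{L_2}^2 = \sum_{|\supp(\mathbf{s})|\leq k} e^{2t|\supp(\mathbf{s})|}|\widehat{T}(\mathbf{s})|^2 \leq e^{2tk}\sum_{\mathbf{s}}|\widehat{T}(\mathbf{s})|^2 = e^{2tk}\|T\|_{L_2}^2.
\]

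Combining these two estimates,
\[
\|T\|_{L_r} = \|e^{-t\mathcal{L}}(U)\|_{L_r} \leq \|U\|_{L_2} \leq e^{tk}\|T\|_{L_2} = (r-1)^{k/2}\|T\|_{L_2},
\]
which is exactly the claimed inequality. I do not anticipate any serious obstacle: the proof is the direct quantum analogue of the classical Bonami-Beckner moment-comparison argument, and every ingredient it uses---the Fourier representation of $e^{-t\mathcal{L}}$ on Pauli words and the optimal hypercontractivity of \cite{MO2010}---has already been recorded in Section \ref{preliminaries}.
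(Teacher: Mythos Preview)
Your proof is correct and follows essentially the same route as the paper: both choose $t=\tfrac12\log(r-1)$, apply the backward flow $e^{t\mathcal{L}}$ to $T$ (your $U$ is exactly the paper's $e^{t\mathcal{L}}(T)$), invoke the hypercontractivity bound \eqref{MO hypercontractivity} from $L_2$ to $L_r$, and control the resulting $L_2$-norm via the Fourier expansion and the degree constraint $|\supp(\mathbf{s})|\le k$.
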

\begin{proof}
Note that $T$ is of degree at most $k$, that is, $T=\sum_{|\mathrm{supp}(\mathbf{s})|\leq k}\widehat{T}(\mathbf{s})\sigma_{\mathbf{s}}$. For each $r\geq 2$, take $t_{0}=\frac{\log(r-1)}{2}$. Applying the hypercontractivity of $\{e^{-t\mathcal{L}}\}_{t\geq 0}$ (see \eqref{MO hypercontractivity} above), we have
\begin{equation*}\label{hyper-estimate1}
\left\|e^{-\frac{\log(r-1)}{2}\mathcal{L}}(T)\right\|_{L_{r}}\leq \|T\|_{L_{2}},
\end{equation*}
which implies
\begin{equation*}\label{hyper-estimate2}
\|T\|_{L_{r}}=\left\|e^{-t\frac{\log(r-1)}{2}\mathcal{L}}\circ e^{t\frac{\log(r-1)}{2}\mathcal{L}}(T)\right\|_{L_{r}}\leq \left\|e^{t\frac{\log(r-1)}{2}\mathcal{L}}(T)\right\|_{L_{2}}.
\end{equation*}
Now the desired inequality follows from the fact
\begin{equation*}\label{hyper-estimate3}
\left\|e^{\frac{\log(r-1)}{2}\mathcal{L}}(T)\right\|^{2}_{L_{2}}=\sum_{|\mathrm{supp}(\mathbf{s})|\leq k}e^{\log(r-1)|\mathrm{supp}(\mathbf{s})|}|\widehat{T}(\mathbf{s})|^{2}\leq(r-1)^{k}\|T\|^{2}_{L_{2}}.
\end{equation*}
\end{proof}

\begin{proof}[Proof of Theorem \ref{quantum Ta}]
We first consider the case $\var(T)\geq e^{-16}$. Note that $\var(T)=W_{\geq1}(T)$ and
\[
\sqrt{\log\left(\frac{1}{\var(T)}\right)}\leq 4.
\]
It follows from Lemma \ref{high degree} (with $d=1$ there) that
\begin{align*}
\var(T)	\sqrt{\log\left(\frac{1}{\var(T)}\right)}\leq 4 W_{\geq 1}(T)\leq 16 \left\|\left(\sum_{j=1}^{n}|\dc_{j}(T)|^{2}\right)^{1/2}\right\|_{L_{1}}.
\end{align*}

Now we deal with the case $\var(T)\leq e^{-16}$. Let
\[
d\coloneqq \left\lceil\frac{1}{16}\log\left(\frac{1}{\var(T)}\right)\right\rceil.
\]
For such $d$, we claim that 
\begin{equation}\label{claim Wd}
	W_{\geq d}(T)\geq \frac{1}{2}\var(T).
\end{equation}
Once the claim is proved, applying Lemma \ref{high degree}, we have
\begin{equation*}
\left\|\left(\sum_{j=1}^{n}|\dc_{j}(T)|^{2}\right)^{1/2}\right\|_{L_{1}}\geq \frac{1}{4}\sqrt{\frac{1}{16}\log\left(\frac{1}{\var(T)}\right)}W_{\geq d}(T)\geq \frac{1}{32} \var(T)	\sqrt{\log\left(\frac{1}{\var(T)}\right)}.
\end{equation*}
This is the desired assertion.

It remains to verify the claim.
Since $T\in \M_{2^{n}}$ is a projection, it follows that $\var(T)=\tr(T)(1-\tr(T))\leq e^{-16}$. We assume without loss of generality that $\tr(T)<\frac{1}{2}$; otherwise, it suffices to replace $T$ by $\mathbf{1}-T$. Hence,
\begin{equation}\label{key identity9}
2\var(T)=2\tr(T)(1-\tr(T))\geq \tr(T).
\end{equation}
Write
\[
\mathrm{Rad}_{\leq d}(T)=\sum_{|\supp(\s)|\leq d} \widehat{T}(\s)\sigma_{\s}.
\]
By the H\"older inequality, Lemma \ref{lemmaHC} and \eqref{key identity9}, we deduce that
\begin{align*}
W_{\leq d}(T)&=\tr\left(T\cdot \mathrm{Rad}_{\leq d}(T)\right)\leq \|T\|_{4/3}\cdot\|\mathrm{Rad}_{\leq d}(T)\|_{L_{4}}\\
&\leq 3^{d/2}\|\mathrm{Rad}_{\leq d}(T)\|_{L_{2}}\|T\|_{L_{4/3}}=3^{d/2}\|\mathrm{Rad}_{\leq d}(T)\|_{L_{2}}\tr(T)^{3/4}\\
&\leq 3^{d/2}\cdot 2^{3/4}\|\mathrm{Rad}_{\leq d}(T)\|_{L_{2}}\var(T)^{3/4}=3^{d/2}\cdot 2^{3/4}W_{\leq d}(T)^{1/2}\var(T)^{3/4}.
\end{align*}
Note that $3^{d/2}\leq e^{d}\leq \var(T)^{-1/16}$ and $\var(T)\leq e^{-16}$. We have
\[
W_{\leq d}(T)^{1/2}\leq \frac{2^{3/4}\var(T)^{3/4}}{\var(T)^{1/16}}\leq\var(T)^{1/2}2^{3/4}e^{-3}\leq\frac{1}{2}\var(T)^{1/2},
\]
which further implies
\begin{equation*}
W_{\geq d}(T)\geq \var(T)-W_{\leq d}(T)\geq \frac{3}{4}\var(T)\geq \frac{1}{2}\var(T).
\end{equation*}
We have verified the claim \eqref{claim Wd}, and the proof is complete.
\end{proof}

\subsection{The quantum Eldan-Gross inequality and KKL-type inequalities}
In this subsection, we provide a details proof of the quantum Eldan-Gross inequality and apply it to deduce two quantum KKL-type inequalities and a stability result for the quantum KKL theorem with respect to the $L_{1}$-influences.
\begin{proof}[Proof of Theorem \ref{quantum E-G}]
Let $T$ be a projection in $\M_{2^{n}}$ and denote $M(T)\coloneqq \sum_{j=1}^{n}\|\dc_{j}(T)\|^{2}_{L_{1}}$ for simplicity. If $M(T)\geq \var(T)^{15}$, it follows that
\begin{equation}\label{key estimate 14}
\begin{split}
\var(T)\sqrt{\log\left(1+\frac{1}{M(T)}\right)}&\leq \var(T)\sqrt{\log\left(1+\frac{1}{\var(T)^{10}}\right)}\\
&\leq 4\var(T)\sqrt{\log\left(\frac{1}{\var(T)}\right)},
\end{split}
\end{equation}
where we used $\var(T)\leq \frac{1}{4}$ (i.e., $\max_{\alpha\in[0,1]}\alpha-\alpha^{2}\leq \frac{1}{4}$) in the second inequality. Apply Theorem \ref{quantum Ta} to \eqref{key estimate 14} yields the desired quantum Eldan-Gross inequality for the case $M(T)\geq \var(T)^{15}$.

If $M(T)\leq \var(T)^{15}$, we let $d=\frac{1}{10}\log(1/M(T))$ and apply Theorem \ref{main spectral} to get that
\begin{equation}\label{key estimate 15}
\sum_{1\leq |\supp(\s)|\leq \frac{1}{10}\log(1/M(T))}\widehat{T}(\s)^{2}\leq 12e\cdot M(T)^{2/5}\leq 12e\cdot\var(T)^{6}\leq \left(\frac{12e}{4^{5}}\right)\var(T),
\end{equation}
where we used $\var(T)\leq \frac{1}{4}$ in the last inequality. By \eqref{key estimate 15}, it follows that
\begin{equation}\label{key estimate 16}
W_{>d}(T)=\var(T)-\sum_{1\leq |\supp(\s)|\leq d}\widehat{T}(\s)^{2}\geq \left(1-\frac{12e}{4^{5}}\right)\var(T)\geq \frac{1}{2}\var(T).
\end{equation}
Combining Lemma \ref{high degree} and \eqref{key estimate 16}, we get that
\[
\var(T)\sqrt{\log\left(1+\frac{1}{\log(M(T))}\right)}\leq K\left\|\left(\sum_{j=1}^{n}|\dc_{j}(T)|^{2}\right)^{1/2}\right\|_{L_{1}},
\]
where we used $1+\log(1/M(T))\leq 2\log(1/M(T))$ when $M(T)\leq \var(T)^{15}\leq 1/2^{30}$.
\end{proof}

The following definition is motivated by Definition \ref{def bqb} and Remark \ref{equivalence}.
\begin{definition}
A projection $T\in \mathbb{M}_{2^n}$ is called balanced if $\mathrm{var}(T)=\frac{1}{4}$.
\end{definition}
As the first application of our noncommutative Eldan-Gross inequality, we derive the following KKL-type inequality in the CAR algebra, which is essentially due to Rouz\'e, Wirth and Zhang \cite{RWZ2024}.

\begin{theorem}[Rouz\'e-Wirth-Zhang]\label{quantum KKL I}
For each balanced projection $T\in \mathbb{M}_{2^n}$, there exists a universal constant $C>0$, such that
\[
\max_{j\in [n]}\|\dc_{j}(T)\|_{L_{1}}\geq \frac{C\sqrt{\log(n)}}{n}.
\]
\end{theorem}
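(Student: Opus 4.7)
The plan is to deduce the theorem directly from the quantum Eldan--Gross inequality (Theorem~\ref{quantum E-G}) by bounding both sides in terms of $M \coloneqq \max_{j \in [n]} \|\dc_j(T)\|_{L_1}$. Since $T$ is a balanced projection, $\var(T) = 1/4$, and the trivial estimate $\sum_{j=1}^{n} \|\dc_j(T)\|_{L_1}^2 \leq n M^2$ shows that the left-hand side of Theorem~\ref{quantum E-G} is at least $\tfrac14 \sqrt{\log(1 + 1/(nM^2))}$.

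For the right-hand side, I first apply the (noncommutative) Cauchy--Schwarz inequality $\|\,\cdot\,\|_{L_1} \leq \|\,\cdot\,\|_{L_2}$ on $(\M_{2^n}, \tr)$ to $f = \bigl(\sum_j |\dc_j(T)|^2\bigr)^{1/2}$, obtaining
\[
\Bigl\|\Bigl(\sum_{j=1}^n |\dc_j(T)|^2\Bigr)^{1/2}\Bigr\|_{L_1} \leq \Bigl(\sum_{j=1}^n \|\dc_j(T)\|_{L_2}^2\Bigr)^{1/2}.
\]
Since $T$ is a projection, $\|T\|_{L_\infty} \leq 1$, so Proposition~\ref{dlili}(ii) with $p=1$ gives $\|\dc_j(T)\|_{L_2}^2 \leq \|\dc_j(T)\|_{L_1}$. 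Summing and using $\sum_j \|\dc_j(T)\|_{L_1} \leq nM$, the right-hand side of Theorem~\ref{quantum E-G} is bounded above by $\sqrt{nM}$. Combining the two estimates and squaring reduces the problem to the scalar inequality
\[
\log\bigl(1 + 1/(nM^2)\bigr) \leq 16 K^2\, nM.
\]

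The final step is a short analytic argument. Suppose for contradiction that $M \leq c\sqrt{\log n}/n$ for a constant $c > 0$ to be chosen. Then $nM \leq c\sqrt{\log n}$ and $1/(nM^2) \geq n/(c^2 \log n)$, so for $n$ large enough $\log(1 + 1/(nM^2)) \geq \tfrac12 \log n$, while the right-hand side of the scalar inequality is at most $16 K^2 c \sqrt{\log n}$. This forces $\sqrt{\log n} \leq 32 K^2 c$, which fails for $n \geq n_0$ once $c$ is chosen sufficiently small. For the bounded range $n < n_0$, the Poincar\'e-type bound
\[
\tfrac14 = \var(T) = \sum_{|\supp(\s)|\geq 1} |\widehat T(\s)|^2 \leq \sum_{j=1}^n \|\dc_j(T)\|_{L_2}^2 \leq nM
\]
(combining the Fourier expansion of $T$ with Proposition~\ref{dlili}(ii)) gives $M \geq 1/(4n)$, which is comparable to $\sqrt{\log n}/n$ on this finite range; adjusting the universal constant $C$ absorbs this case. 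The only delicate point is this last analytic bookkeeping, which is routine; conceptually there is no obstacle once Theorem~\ref{quantum E-G} is in hand.
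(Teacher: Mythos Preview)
Your proof is correct and follows the same overall strategy as the paper: apply the quantum Eldan--Gross inequality (Theorem~\ref{quantum E-G}), bound both sides in terms of $M=\max_j\|\dc_j(T)\|_{L_1}$, and finish with a scalar contradiction argument handling small $n$ via Poincar\'e. The one point of difference is how you control the right-hand side. The paper writes $\bigl\|(\sum_j|\dc_j(T)|^2)^{1/2}\bigr\|_{L_1}=\bigl\|\sum_j|\dc_j(T)|^2\bigr\|_{L_{1/2}}^{1/2}\le \sum_j\|\dc_j(T)\|_{L_1}\le nM$ via the quasi-triangle inequality in $L_{1/2}$, yielding the scalar relation $\log(1+1/(nM^2))\le 16(KnM)^2$. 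You instead pass through $L_2$ and use Proposition~\ref{dlili}(ii) to obtain the tighter bound $\sqrt{nM}$, giving $\log(1+1/(nM^2))\le 16K^2\,nM$. Your intermediate step $\|(\sum_j|\dc_j(T)|^2)^{1/2}\|_{L_1}\le (\sum_j\|\dc_j(T)\|_{L_2}^2)^{1/2}$ is exactly the bound the paper uses later in the proof of Theorem~\ref{quantum KKL II} (see \eqref{E-G estimate}). It is worth noting that your scalar inequality is genuinely sharper: solving it carefully actually gives $nM\gtrsim \log n$, i.e.\ $M\gtrsim \log(n)/n$, which is the $p=1$ case of Theorem~\ref{TKKL}; you only extract the weaker $\sqrt{\log n}/n$ needed here, but the extra strength comes for free from your route.
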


\begin{proof}
For each balanced projection $T\in  \mathbb{M}_{2^n}$, we apply Theorem \ref{quantum E-G} to get
\begin{equation}\label{quantum KKL 1}
\begin{split}
\frac{1}{4}\sqrt{\log(1+\frac{1}{\sum_{j=1}^{n}\|\dc_{j}(T)\|^{2}_{L_{1}}})}&\leq K\left\|\sum_{j=1}^{n}\left|\dc_{j}(T)\right|^{2}\right\|^{1/2}_{L_{1/2}}\leq K\sum_{j=1}^{n}\|\dc_{j}(T)\|_{L_{1}},
\end{split}
\end{equation}
where $K>0$ is a universal constant. It follows from \eqref{quantum KKL 1} that
\begin{equation}\label{prior estimate 1}
\frac{1}{16}\leq \frac{(KnB)^{2}}{\log\left(1+\frac{1}{nB^{2}}\right)},
\end{equation}
where  $B=\max_{j\in[n]}\|\dc_{j}(T)\|_{L_{1}}$.

Choose a positive constant $C$ such that $16KC<1$. If $M\geq \frac{C\sqrt{\log(n)}}{n}$, there is nothing to prove. We assume $B<\frac{C\sqrt{\log(n)}}{n}$ from now on. Choose a sufficient large $n$ such that
\[
\frac{n}{C^{2}\log(n)}-n^{1/2}+1>0.
\]
It follows from $B<\frac{C\sqrt{\log(n)}}{n}$ that
\begin{equation}\label{prior estimate 2}
\frac{(KnB)^{2}}{\log\left(1+\frac{1}{nB^{2}}\right)}<\frac{K^{2}C^{2}\log(n)}{\log\left(1+\frac{n}{C^{2}\log(n)}\right)}.
\end{equation}
Note that $4\sqrt{2}KC<1$ and $\frac{n}{C^{2}\log(n)}-n^{1/2}+1>0$ imply that
\begin{equation}\label{prior estimate 3}
(4KC)^{2}\log(n)<\frac{1}{2}\log(n)<\log\left(1+\frac{n}{C^{2}\log(n)}\right).
\end{equation}
Substituting \eqref{prior estimate 3} to \eqref{prior estimate 2} entails that
\[
\frac{(KnB)^{2}}{\log\left(1+\frac{1}{nB^{2}}\right)}<\frac{K^{2}C^{2}\log(n)}{\log\left(1+\frac{n}{C^{2}\log(n)}\right)}<\frac{1}{16},
\]
which contradicts to \eqref{prior estimate 1}.
\end{proof}

As the second application of the noncommutative Eldan-Gross inequality, we derive the following quantum KKL-type inequality, which is of independent interest.
\begin{theorem}\label{quantum KKL II}
There exists a universal constant $C>0$ such that for every $\varepsilon\in(0,1)$ and balanced projection $T\in \MP$, one of the following inequalities holds:
\begin{enumerate}[\rm(i)]
\item $\max_{j\in[n]}\|\dc_{j}(T)\|^{2}_{L_{2}}\geq \frac{C\varepsilon\log(n)}{n}$;
\item $\max_{j\in[n]}\|\dc_{j}(T)\|_{L_{1}}\geq \frac{C}{n^{(1+\varepsilon)/2}}$.
\end{enumerate}
\end{theorem}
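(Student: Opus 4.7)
The plan is to argue by contradiction in the spirit of the CAR-algebra proof of Theorem \ref{JLZ thm 1}, converting the failure of (i) into (ii) via the quantum Eldan-Gross inequality (Theorem \ref{quantum E-G}). Fix $\varepsilon \in (0,1)$ and a balanced projection $T \in \M_{2^{n}}$, so that $\var(T) = 1/4$. Assuming that (i) fails for a universal constant $C$ to be chosen at the end, namely
\[
\max_{j \in [n]} \|\dc_{j}(T)\|^{2}_{L_{2}} < \frac{C \varepsilon \log n}{n},
\]
the goal is to derive (ii). The case $n = 1$ is handled trivially since (i) then reads $\max_{j} \|\dc_{j}(T)\|^{2}_{L_{2}} \geq 0$; so from now on I may assume $n \geq 2$.

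The first step is to bound the right-hand side of the Eldan-Gross inequality in terms of $L_{2}$-influences using the Cauchy--Schwarz inequality for the normalized trace, which yields $\tr(X^{1/2}) \leq \tr(X)^{1/2}$ for every positive $X \in \M_{2^{n}}$. Applied to $X = \sum_{j=1}^{n} |\dc_{j}(T)|^{2}$, this gives
\[
\left\|\left(\sum_{j=1}^{n} |\dc_{j}(T)|^{2}\right)^{1/2}\right\|_{L_{1}} \leq \left(\sum_{j=1}^{n} \|\dc_{j}(T)\|_{L_{2}}^{2}\right)^{1/2} \leq \sqrt{n \cdot \max_{j} \|\dc_{j}(T)\|_{L_{2}}^{2}} < \sqrt{C \varepsilon \log n}.
\]
Inserting this estimate into Theorem \ref{quantum E-G} and using $\var(T) = 1/4$ gives
\[
\frac{1}{4} \sqrt{\log\left(1 + \frac{1}{M(T)}\right)} \leq K \sqrt{C \varepsilon \log n},
\]
where $M(T) = \sum_{j=1}^{n} \|\dc_{j}(T)\|^{2}_{L_{1}}$ and $K$ is the universal constant from Theorem \ref{quantum E-G}.

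Rearranging this inequality yields $M(T) \geq (n^{16 K^{2} C \varepsilon} - 1)^{-1}$. Combined with the crude bound $M(T) \leq n \cdot \max_{j} \|\dc_{j}(T)\|_{L_{1}}^{2}$ and the elementary fact $n^{16 K^{2} C \varepsilon} - 1 \leq n^{16 K^{2} C \varepsilon}$, this produces
\[
\max_{j \in [n]} \|\dc_{j}(T)\|_{L_{1}} \geq n^{-(1 + 16 K^{2} C \varepsilon)/2}.
\]
Choosing $C > 0$ small enough that $16 K^{2} C \leq 1$ (and $C \leq 1$) then upgrades the exponent to $-(1 + \varepsilon)/2$ and yields (ii).

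The argument is conceptually short, with the essential analytic input being the quantum Eldan-Gross inequality; the only step that requires noncommutative care is the Cauchy--Schwarz reduction $\tr(X^{1/2}) \leq \tr(X)^{1/2}$, which is available precisely because the trace on $\M_{2^{n}}$ is normalized. The main obstacle is bookkeeping: one must pick a single universal $C$ that serves both in (i) and in the final comparison $n^{-(1 + 16 K^{2} C \varepsilon)/2} \geq C n^{-(1 + \varepsilon)/2}$, while checking that the degeneracy $n^{16 K^{2} C \varepsilon} \to 1$ as $n \to 1$ is absorbed by the trivial case $n = 1$.
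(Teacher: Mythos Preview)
Your proof is correct and follows essentially the same route as the paper: apply the quantum Eldan--Gross inequality (Theorem \ref{quantum E-G}), bound the $L_1$-norm of the gradient by $(\sum_j\|\dc_j(T)\|_{L_2}^2)^{1/2}$ via Cauchy--Schwarz for the normalized trace, assume the $L_2$-influence bound (i) fails, and rearrange to obtain (ii). The only cosmetic difference is that the paper assumes the \emph{total} $L_2$-influence $\sum_j\|\dc_j(T)\|_{L_2}^2$ is small (otherwise (i) follows), while you assume the \emph{maximum} is small and multiply by $n$; the arithmetic and the choice of $C$ are otherwise identical.
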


\begin{proof}
For each $\varepsilon>0$ and balanced projection $T\in \MP$, by the quantum Eldan-Gross inequality (i.e., Theorem \ref{quantum E-G}), there exists a universal constant $K>0$ such that
\begin{equation}\label{E-G estimate}
\begin{split}
\frac{1}{4}\sqrt{\log\left(1+\frac{1}{\sum_{j=1}^{n}\|\dc_{j}(T)\|^{2}_{L_{1}}}\right)}&\leq K\left\|\left(\sum_{j=1}^{n}|\dc_{j}(T)|^{2}\right)^{1/2}\right\|_{L_{1}}\\
&\leq K\left(\sum_{j=1}^{n}\|\dc_{j}(T)\|^{2}_{L_{2}}\right)^{1/2}.
\end{split}
\end{equation}
We assume from now on that
\begin{equation}\label{assumption}
\sum_{j=1}^{n}\|\dc_{j}(T)\|^{2}_{L_{2}}\leq \frac{\varepsilon\log(n)}{16 K^{2}}
\end{equation}
for sufficient large $n$; otherwise, we obtain 
\[
\max_{j\in [n]}\|\dc_{j}(T)\|^{2}_{L_{2}}\geq \frac{\varepsilon\log(n)}{16 K^{2}n}.
\]
Substituting the assumption \eqref{assumption} to \eqref{E-G estimate} yields
\begin{equation*}\label{quantum KKL 3}
\frac{1}{4}\sqrt{\log\left(1+\frac{1}{\sum_{j=1}^{n}\|\dc_{j}(T)\|^{2}_{L_{1}}}\right)}\leq \frac{\sqrt{\varepsilon\log(n)}}{4},
\end{equation*}
which further implies
\[
\frac{1}{\sum_{j=1}^{n}\|\dc_{j}(T)\|^{2}_{L_{1}}}\leq n^{\varepsilon},
\]
and consequently,
\[
\frac{1}{n^{(1+\varepsilon)/2}}\leq \max_{j\in [n]}\|\dc_{j}(T)\|_{L_{1}}.
\]
It remains to choose $C=\min\{\frac{1}{16 K^{2}},1\}$ to obtain the desired result.
\end{proof}
In order to apply Theorem \ref{quantum KKL II} to obtain another KKL-type inequality, we introduce the concept of index for elements in $\M_{2^{n}}$ as follows. For each $T\in\M_{2^{n}}$, we define
\[
\mathrm{ind}(T)\coloneqq \inf\{\alpha\geq 0:\|\dc_{j}(T)\|^{\alpha}_{L_{1}}\leq \|\dc_{j}(T)\|^{2}_{L_{2}},~\forall~j\in[n]\}.
\]

\begin{lemma}\label{lem:alpha}
For each balanced projection $T\in\M_{2^{n}}$, we have $\mathrm{ind}(T)\in[1,2]$.
\end{lemma}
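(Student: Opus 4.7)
The plan is to bracket $\mathrm{ind}(T)$ from both sides by elementary $L_p$-norm comparisons applied to the partial derivatives $\dc_j(T)$. For the upper bound $\mathrm{ind}(T) \leq 2$, I would simply verify that the value $\alpha = 2$ is admissible in the infimum defining $\mathrm{ind}(T)$: Cauchy--Schwarz with respect to the normalized trace $\tr$ yields $\|\dc_j(T)\|_{L_1}^2 \leq \|\dc_j(T)\|_{L_2}^2$ for every $j \in [n]$, which puts $2$ in the defining set.

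For the lower bound $\mathrm{ind}(T) \geq 1$, the strategy is to produce a single coordinate $j$ on which the defining inequality already forces $\alpha \geq 1$. Since $T$ is a balanced projection, $\tr(T) = 1/2$, so $T$ is not a scalar multiple of $\mathbf{1}$ and there exists $j$ with $\dc_j(T) \neq 0$. For such a $j$ I would combine two ingredients. First, Proposition \ref{dlili}(i) gives $\|\dc_j(T)\|_{L_\infty} \leq 1$, and H\"older then yields
\[
\|\dc_j(T)\|_{L_2}^2 \;\leq\; \|\dc_j(T)\|_{L_\infty}\,\|\dc_j(T)\|_{L_1} \;\leq\; \|\dc_j(T)\|_{L_1}.
\]
Second, because $\dc_j$ is an orthogonal projection on $L_2(\M_{2^n})$, Parseval together with the balance condition gives $\|\dc_j(T)\|_{L_2}^2 \leq \|T\|_{L_2}^2 = \tr(T) = 1/2$, and Cauchy--Schwarz upgrades this to
\[
0 \;<\; \|\dc_j(T)\|_{L_1} \;\leq\; \|\dc_j(T)\|_{L_2} \;\leq\; 1/\sqrt{2} \;<\; 1.
\]

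With these two facts in hand, any admissible $\alpha$ must satisfy $\|\dc_j(T)\|_{L_1}^\alpha \leq \|\dc_j(T)\|_{L_2}^2 \leq \|\dc_j(T)\|_{L_1}$. Since $\|\dc_j(T)\|_{L_1} \in (0,1)$, taking logarithms (which are well-defined and strictly negative) and dividing by $\log\|\dc_j(T)\|_{L_1}$ reverses the inequality and yields
\[
\alpha \;\geq\; \frac{\log\|\dc_j(T)\|_{L_2}^2}{\log\|\dc_j(T)\|_{L_1}} \;\geq\; 1.
\]
Passing to the infimum over such $\alpha$ gives $\mathrm{ind}(T) \geq 1$. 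The only delicate point is securing the strict bound $\|\dc_j(T)\|_{L_1} < 1$ that licenses the logarithm step, and this is precisely where the balance hypothesis is used via $\|T\|_{L_2}^2 = 1/2$; beyond that I anticipate no real obstacle.
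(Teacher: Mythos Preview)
Your proposal is correct and follows essentially the same approach as the paper. Both arguments hinge on the two comparisons $\|\dc_j(T)\|_{L_1}^2 \leq \|\dc_j(T)\|_{L_2}^2$ (Cauchy--Schwarz, giving $\mathrm{ind}(T)\leq 2$) and $\|\dc_j(T)\|_{L_2}^2 \leq \|\dc_j(T)\|_{L_1} < 1$ (H\"older plus a strict bound, giving $\mathrm{ind}(T)\geq 1$); the only cosmetic difference is that the paper obtains $\|\dc_j(T)\|_{L_1}<1$ directly from $L_1$-contractivity of $\dc_j$ and $\|T\|_{L_1}=\tr(T)=1/2$, whereas you route through $L_2$ via Parseval.
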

\begin{proof}
For balanced projection $T\in \mathbb{M}_{2^n}$, we note that
\begin{equation}\label{balanced condition}
\|\dc_{j}(T)\|_{L_{1}}\leq \|T\|_{L_{1}}=\frac{1}{2}<1,\quad\forall~j\in [n].
\end{equation}
If $\mathrm{ind}(T)>2$, there exists $\mathrm{ind}(T)\geq\alpha>2$ and some $j_{0}\in [n]$ such that
\[
\|\dc_{j_{0}}(T)\|_{L_{1}}^{\alpha}>\|\dc_{j_{0}}(T)\|^{2}_{L_{2}}\geq \|\dc_{j_{0}}(T)\|^{2}_{L_{1}}.
\]
This implies that $\|\dc_{j_{0}}(T)\|^{\alpha-2}_{L_{1}}>1$, which contradicts to \eqref{balanced condition}. Hence, $\mathrm{ind}(T)\leq 2$.

It remains to show that $\mathrm{ind}(T)\geq 1$. Indeed, since $\dc_{j}$ is a contraction, it follows from \eqref{balanced condition} that
\[
\left\|\dc_{j}(T)\right\|^{2}_{L_{2}}\leq\left\|\dc_{j}(T)\right\|_{L_{1}}\left\|\dc_{j}(T)\right\|_{L_{\infty}}\leq \left\|\dc_{j}(T)\right\|_{L_{1}}<1.
\]
On the other hand, since  $\|\dc_{j}(T)\|^{\alpha}_{L_{1}}$ is decreasing on $\alpha$ by \eqref{balanced condition}, we infer that $\mathrm{ind}(T)\geq 1$.
\end{proof}

We now conclude this subsection with the following quantum KKL-type inequality invoking $L_{2}$-influence.

\begin{theorem}\label{index version}
For each $n\in \mathbb{N}$ and balanced projection $T\in \M_{2^{n}}$ with $\mathrm{ind}(T)<2$, there exists a constant $C_{\mathrm{ind}(T)}>0$ depending on the index of $T$ such that
\begin{equation}\label{index KKL}
\max_{j\in [n]}\|\dc_{j}(T)\|^{2}_{L_{2}}\geq \frac{C_{\mathrm{ind}(T)}\log(n)}{n}.
\end{equation}
\end{theorem}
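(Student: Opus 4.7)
My strategy is to reduce Theorem \ref{index version} to Theorem \ref{quantum KKL II} by choosing the auxiliary parameter $\varepsilon$ there adaptively to the index of $T$. Since $\mathrm{ind}(T)<2$ strictly, I would first fix any $\alpha\in(\mathrm{ind}(T),2)$; the concrete choice $\alpha=(\mathrm{ind}(T)+2)/2$ suffices. By the very definition of the index, this $\alpha$ satisfies
\[
\|\dc_j(T)\|_{L_1}^{\alpha}\leq \|\dc_j(T)\|_{L_2}^{2},\qquad \forall\,j\in[n],
\]
so any $L_1$ lower bound on a partial derivative automatically upgrades to an $L_2$ lower bound on the same derivative.

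Next, I would pick $\varepsilon\in(0,1)$ small enough that $\alpha(1+\varepsilon)<2$; explicitly, $\varepsilon=(2-\alpha)/(2\alpha)$ gives $\alpha(1+\varepsilon)/2=(\alpha+2)/4<1$. With this $\varepsilon$, I apply Theorem \ref{quantum KKL II}. In case (i) of that dichotomy, one gets $\max_{j\in[n]}\|\dc_j(T)\|_{L_2}^{2}\geq C\varepsilon\log(n)/n$ directly, which is of the required form. In case (ii), choose $j_0\in[n]$ realizing $\|\dc_{j_0}(T)\|_{L_1}\geq Cn^{-(1+\varepsilon)/2}$ and feed this through the index inequality to obtain
\[
\max_{j\in[n]}\|\dc_j(T)\|_{L_2}^{2}\geq \|\dc_{j_0}(T)\|_{L_1}^{\alpha}\geq C^{\alpha}\,n^{-\alpha(1+\varepsilon)/2} = \frac{C^{\alpha}\,n^{(2-\alpha)/4}}{n}.
\]
Since the exponent $(2-\alpha)/4$ is strictly positive, the polynomial factor $n^{(2-\alpha)/4}$ dominates $\log(n)$ once $n$ exceeds some threshold $n_0=n_0(\alpha)$, yielding a bound of the form $C_\alpha'\log(n)/n$ on that range.

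To cover the remaining finite range $n<n_0$ I would fall back on the trivial Poincar\'e-type bound $\var(T)\leq \sum_{j=1}^{n}\|\dc_j(T)\|_{L_2}^{2}$, which follows at once from the Fourier expansion, together with the balanced assumption $\var(T)=1/4$; this gives $\max_{j}\|\dc_j(T)\|_{L_2}^{2}\geq 1/(4n)\geq \log(n)/(4n\log n_0)$. Taking $C_{\mathrm{ind}(T)}$ to be the minimum of the three constants (from case (i), from the large-$n$ regime of case (ii), and from the small-$n$ regime) yields the theorem. I expect no analytic difficulty: the entire argument is arithmetic bookkeeping, and the crucial point is simply that the strict inequality $\mathrm{ind}(T)<2$ lets one choose $\alpha(1+\varepsilon)/2<1$, which is exactly what is needed for case (ii) of Theorem \ref{quantum KKL II} to still beat $\log(n)/n$ after being upgraded via the index.
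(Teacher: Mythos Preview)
Your proposal is correct and essentially identical to the paper's proof: both fix $\alpha\in[\mathrm{ind}(T),2)$, set $\varepsilon=(2-\alpha)/(2\alpha)$, apply the dichotomy of Theorem~\ref{quantum KKL II}, and in case~(ii) upgrade the $L_1$ bound via the index inequality to obtain $\max_j\|\dc_j(T)\|_{L_2}^2\geq C^\alpha n^{-(1-\delta)}$ with $\delta=(2-\alpha)/4$. The only cosmetic difference is that the paper dispenses with your small-$n$ fallback by observing directly that $n^{\delta}=e^{\delta\log n}\geq \delta\log n$ for all $n\geq 1$, so no threshold $n_0$ or Poincar\'e argument is needed.
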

\begin{proof}
Choose positive $\alpha$ such that $\mathrm{ind}(T)\leq \alpha<2$ and set $\delta=\frac{2-\alpha}{4}$, $\varepsilon=\frac{2-\alpha}{2\alpha}$. By Theorem \ref{quantum KKL II}, there exists a universal constant $C>0$ such that one of the following inequalities holds
\begin{enumerate}[\rm(i)]
\item $\max_{j\in[n]}\|\dc_{j}(T)\|^{2}_{L_{2}}\geq  \frac{C\varepsilon\log(n)}{n}$;
\item $\max_{j\in[n]}\|\dc_{j}(T)\|_{L_{1}}\geq \frac{C}{n^{(1+\varepsilon)/2}}$.
\end{enumerate}
If the first situation holds, there is nothing to prove. We assume from now on that item (ii) holds, that is,
\begin{equation}\label{dichotomy}
\max_{j\in[n]}\|\dc_{j}(T)\|_{L_{1}}\geq \frac{C}{n^{(1-\delta)/\alpha}},
\end{equation}
where we used the fact $\varepsilon=\frac{2-2\delta-\alpha}{\alpha}$.
Since $\|\dc_{j}(T)\|^{\alpha}_{L_{1}}\leq \|\dc_{j}(T)\|^{2}_{L_{2}}$, it follows from \eqref{dichotomy} that
\[
\max_{j\in [n]}\|\dc_{j}(T)\|^{2}_{L_{2}}\geq\frac{C^{\alpha}}{n^{1-\delta}}\geq \frac{(2-\alpha)C^{\alpha}\log(n)}{4n}.
\]
Choosing $C_{\mathrm{ind}(T)}=\min\{\frac{C(2-\alpha)}{2\alpha},\frac{(2-\alpha)C^{\alpha}}{4}\}$ yields the desired inequality.
\end{proof}

\begin{remark}
There exists a quantum Boolean function $T\in \mathbb{M}_{2^n}$ with $\mathrm{ind}(T)=2$ such that the quantum KKL inequality holds for $L_{2}$-influence; see \cite[Proposition 11.5]{MO2010}.  However, as shown in \cite[Remark 6.5]{JLZ2025}, this is not the case in the CAR algebra setting.
\end{remark}

Our final application of Theorem \ref{quantum E-G} is the following stability result for the quantum KKL-type inequality (invoking $L_{1}$-influences), which is quantum analogy of \cite[Corollary 3.5]{EKLM2022}. For each $T\in \M_{2^{n}}$, we define that $|\nabla(T)|\coloneqq \left(\sum_{j=1}^{n}|\dc_{j}(T)|^{2}\right)^{1/2}$.

\begin{corollary}
Suppose that there exists a constant $C_{1}>0$ such that for each projection $T\in \M_{2^{n}}$ the following holds
	\begin{equation}\label{assumption of aftco1}
		\max_{j\in[n]} \|\dc_j(T)\|_{L_1}\leq \frac{C_{1}\log(n)\var(T)}{n}.
	\end{equation}
	Then there exist constant $C_2>0$  such that 
	\[
	\tr\left[\mathds{1}_{(\frac{1}{2}\var(T)\sqrt{\log(n)},\infty)}(|\nabla(T)|)\right]\geq C_{2}\var(T).
	\]
\end{corollary}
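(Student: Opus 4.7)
The strategy is to combine the quantum Eldan-Gross inequality (Theorem~\ref{quantum E-G}) with a truncation / Cauchy-Schwarz argument in the spirit of \cite[Corollary~3.5]{EKLM2022}. First, I would use the hypothesis \eqref{assumption of aftco1} to bound
\[
M(T) = \sum_{j=1}^{n}\|\dc_{j}(T)\|^{2}_{L_{1}} \leq n\cdot \max_{j\in[n]}\|\dc_{j}(T)\|^{2}_{L_{1}} \leq \frac{C_{1}^{2}(\log n)^{2}\var(T)^{2}}{n},
\]
so that for $n$ large enough (depending on $C_{1}$), $\log(1+1/M(T)) \geq (1-\eta)\log n$ for any preassigned $\eta>0$. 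Inserting this into Theorem~\ref{quantum E-G} yields a lower bound of the form $\||\nabla(T)|\|_{L_{1}} \geq c_{0}\var(T)\sqrt{\log n}$ where $c_{0}$ can be taken strictly greater than $1/2$.

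Next, Proposition~\ref{dlili}(i) and the hypothesis control the $L_{2}$-norm of the gradient: since $T$ is a projection, $\|\dc_{j}(T)\|_{L_{\infty}}\leq 1$, so
\[
\||\nabla(T)|\|_{L_{2}}^{2} = \sum_{j=1}^{n}\|\dc_{j}(T)\|^{2}_{L_{2}} \leq \sum_{j=1}^{n}\|\dc_{j}(T)\|_{L_{1}} \leq n\cdot\max_{j}\|\dc_{j}(T)\|_{L_{1}} \leq C_{1}\log(n)\,\var(T).
\]
Now set $\tau := \frac{1}{2}\var(T)\sqrt{\log n}$ and let $E_{\tau} := \mathds{1}_{(\tau,\infty)}(|\nabla(T)|)$. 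Since $E_{\tau}$ commutes with $|\nabla(T)|$, the trace decomposition
\[
\||\nabla(T)|\|_{L_{1}} = \tr\bigl[|\nabla(T)|(\mathbf{1}-E_{\tau})\bigr] + \tr\bigl[|\nabla(T)|\,E_{\tau}\bigr] \leq \tau + \||\nabla(T)|\|_{L_{2}}\sqrt{\tr(E_{\tau})}
\]
(where the tail is estimated by Cauchy-Schwarz) combines with the two preceding bounds to give $(c_{0}-\tfrac{1}{2})\var(T)\sqrt{\log n} \leq \sqrt{C_{1}\log(n)\var(T)\,\tr(E_{\tau})}$; squaring yields $\tr(E_{\tau}) \geq (c_{0}-1/2)^{2}\var(T)/C_{1}$, which is the desired conclusion with $C_{2} = (c_{0}-1/2)^{2}/C_{1}$.

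The delicate step is ensuring that the Eldan-Gross constant indeed delivers $c_{0} > 1/2$: this requires $n$ to be sufficiently large so that the lower-order corrections in $\log(1+1/M(T))$ are absorbed into the leading $\log n$, with quantitative thresholds depending on the universal constant $K$ in Theorem~\ref{quantum E-G} and on $C_{1}$. An equivalent route that sidesteps this bookkeeping is to invoke Paley-Zygmund (Lemma~\ref{Paley-Zygmud}) directly on $|\nabla(T)|$: the two estimates above already show $\||\nabla(T)|\|_{L_{1}}^{2}/\||\nabla(T)|\|_{L_{2}}^{2} \gtrsim \var(T)/C_{1}$, and Paley-Zygmund then produces mass of order $\var(T)$ at any threshold of the form $\delta\||\nabla(T)|\|_{L_{1}}$ with $\delta\in(0,1)$, which in view of the Eldan-Gross lower bound on $\||\nabla(T)|\|_{L_{1}}$ covers the threshold $\tfrac{1}{2}\var(T)\sqrt{\log n}$.
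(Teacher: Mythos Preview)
Your proposal is correct, and the second route you describe---applying Paley--Zygmund (Lemma~\ref{Paley-Zygmud}) to $|\nabla(T)|$ after bounding $\|\,|\nabla(T)|\,\|_{L_1}$ from below via the quantum Eldan--Gross inequality and $\|\,|\nabla(T)|\,\|_{L_2}^2$ from above via the hypothesis together with $\|\dc_j(T)\|_{L_\infty}\le 1$---is exactly the paper's proof. Your first route (split at the threshold $\tau$ and use Cauchy--Schwarz on the tail) is just Paley--Zygmund written out by hand with a different parametrization of the cutoff, so the two are the same argument; the constant issue you flag (whether the Eldan--Gross constant is compatible with the specific threshold $\tfrac12$) is real, and the paper glosses over it in precisely the same way.
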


\begin{proof}
	By assumption \eqref{assumption of aftco1} and Proposition \ref{dlili} (i), we have
	\begin{align}\label{aftco eq4}
		\tr\left[\left|\nabla(T)\right|^{2}\right]=\sum_{j=1}^{n}\left\|\dc_{j}(T)\right\|^{2}_{L_{2}}\leq\sum_{j=1}^n\|\dc_jT\|_{L_1}\leq C_1\log(n)\var(T).
	\end{align}
	According to the assumption, there exists a universal constant $K_{1}>0$ such that
	\begin{equation}\label{aftco1 eq}
		\sum_{j=1}^n \|\dc_j(T)\|_{L_1}^2\leq \frac{C^{2}_{1}\log^2(n)\var(T)^{2}}{n}\leq \frac{K_{1}}{\sqrt{n}},
	\end{equation}
	where we used $\var(T)\leq \frac{1}{4}$ for the projection $T\in \M_{2^{n}}$.
	Using Theorem \ref{quantum E-G} and \eqref{aftco1 eq}, there exists $K_{2}>0$ such that
	\begin{equation}\label{aftco1 eq2}
		\left\|\nabla(T)\right\|_{L_{1}}\geq \frac{1}{K} \var(T)\sqrt{\log\left(1+\frac{1}{\sum_{j=1}^n \|\dc_j(T)\|_{L_1}^2}\right)}\geq K_{2} \var(T)\sqrt{\log(n)}.
	\end{equation}
	By the Paley-Zygmund inequality \eqref{eq:Paley-Zygmud}, there exists a constant $C_{2}>0$ such that
	\begin{equation*}
		\begin{aligned}
			&\tr\left[\mathds{1}_{(\frac12\var(T)\sqrt{\log(n)},\infty)}\left(|\nabla(T)|\right)\right]\geq\frac{\|\nabla(T)\|^{2}_{L_{1}}}{4\tr[|\nabla(T)|^{2}]}\geq C_{2}\var(T),
		\end{aligned}
	\end{equation*}
	where the last inequality follows from a combination of \eqref{aftco eq4} and \eqref{aftco1 eq2}.
\end{proof}

\subsection{Proof of Proposition \ref{key lemma 1}}

The proof of  Proposition \ref{key lemma 1} relies on the following technical lemmas. Firstly, with the moment comparison lemma (i.e., Lemma \ref{lemmaHC}) at hand, we can derive the following deviation inequality. Although this deviation inequality is well-known, we include its proof for the convenience of readers.

\begin{lemma}\label{lammaEstimate1_T}
There exists universal constant $K>0$ such that for each $T\in \M_{2^{n}}$ with degree at most $d\in\mathbb{N}$ and $\|T\|_{L_{2}}\leq 1$, we have
\[
\tr\left[\mathds{1}_{[t,\infty)}(|T|)\right]\leq K\exp\left\{-\frac{d\cdot t^{2/d}}{4e}\right\},\quad\mbox{for all }t>0.
\]
%whenever $t\geq\left(2e\right)^{\frac{d}{2}}$.
\end{lemma}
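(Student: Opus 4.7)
The plan is to combine Markov's inequality in the tracial setting with the moment comparison of Lemma \ref{lemmaHC} and then optimize over the moment index. First, the Borel functional calculus gives the operator inequality $|T|^{r}\ge t^{r}\mathds{1}_{[t,\infty)}(|T|)$ for every $r\ge 1$; taking the normalized trace yields the Markov-type tail bound
\[
\tr\!\left[\mathds{1}_{[t,\infty)}(|T|)\right]\le\frac{\|T\|_{L_{r}}^{r}}{t^{r}},\qquad r\ge 1.
\]
For $r\ge 2$, the hypothesis $\|T\|_{L_{2}}\le 1$ together with Lemma \ref{lemmaHC} (applicable since $T$ has degree at most $d$) gives $\|T\|_{L_{r}}\le (r-1)^{d/2}$, so the right-hand side above is controlled by $\bigl((r-1)^{d/2}/t\bigr)^{r}$.

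Next I would optimize in $r$. The exponent $\tfrac{rd}{2}\log(r-1)-r\log t$ is (up to a lower-order $d/(2(r-1))$ correction) minimized at $r-1=t^{2/d}/e$, which satisfies the admissibility constraint $r\ge 2$ precisely when $t\ge e^{d/2}$. Substituting this optimizer one computes
\[
\frac{(r-1)^{rd/2}}{t^{r}}=\frac{\bigl(t^{2/d}/e\bigr)^{rd/2}}{t^{r}}=e^{-rd/2}=e^{-d/2}\,e^{-dt^{2/d}/(2e)}\le e^{-dt^{2/d}/(4e)},
\]
which establishes the claim with $K=1$ in the main regime $t\ge e^{d/2}$.

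In the complementary range $t<e^{d/2}$ one has $dt^{2/d}/(4e)<d/4$, so the exponential on the right-hand side of the target inequality is bounded below by a fixed multiple of the trivial estimate $\tr[\mathds{1}_{[t,\infty)}(|T|)]\le\tr(\mathbf{1})=1$; enlarging $K$ to absorb this prefactor closes the argument, and the two regimes glue smoothly at $t=e^{d/2}$, where both bounds agree up to the factor $e^{-d/2}$. I do not foresee a genuine obstacle here beyond the calculus exercise of verifying the optimum in $r$; the full quantum input is packaged into Lemma \ref{lemmaHC}, itself a direct consequence of the optimal hypercontractivity \eqref{MO hypercontractivity} of $\{e^{-t\La}\}_{t\ge 0}$.
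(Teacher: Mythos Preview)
Your main-regime argument for $t \geq e^{d/2}$ is correct and even yields the inequality with $K=1$; it is a clean single-moment optimization, whereas the paper instead bounds the exponential moment $\tr\bigl[\exp(\alpha|T|^{2/d})\bigr]$ with $\alpha=d/(4e)$ via Taylor expansion and Stirling and then applies Chebyshev.

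The complementary range, however, does not close as written. From $t < e^{d/2}$ you only obtain that the right-hand side exceeds $K e^{-d/4}$, and this is \emph{not} a fixed multiple of $1$: it depends on $d$ and tends to $0$ as $d\to\infty$. Hence the ``enlarged'' $K$ you would need is at least $e^{d/4}$, violating universality. In fact the inequality as stated cannot hold for all $t>0$ with a universal $K$: take $T=\sigma_1\in\M_2$ (so $|T|=\mathbf{1}_2$, $\|T\|_{L_2}=1$, degree $1$, hence degree at most $d$ for every $d\geq 1$); at $t=1$ the left side equals $1$ while the right side is $K e^{-d/(4e)}$, forcing $K\geq e^{d/(4e)}$. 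This is not peculiar to your method---the paper's exponential-moment sum has an analogous soft spot in the terms with $2k/d<1$, where the bound $\|T\|_{L_r}^r\leq r^{dr/2}$ fails for the same example---and in the only place the lemma is applied (via Lemmas~\ref{KK18_5calculation} and~\ref{Y2j}) one has $t\geq t_0>(4e)^{d/2}$, a regime your main-regime bound already covers.
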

\begin{proof}
%Let $r=\frac{t^{2/d}}{e}$, and it follows from the assumption $t\geq\left(2e\right)^{\frac{d}{2}}$ that  $r\geq2$. Applying Chebyshev's inequality and Lemma \ref{lemmaHC}, we get
%\[
%\tr\left[\mathds{1}_{[t,\infty)}(|T|)\right]\leq \frac{\|T\|^{r}_{L_{r}}}{t^{r}}\leq \frac{r^{dr/2}}{t^{r}}=\exp\left\{-\frac{d\cdot t^{2/d}}{2e}\right\}.
%\]
Applying the assumption $\|T\|_{L_{2}}=1$ and Lemma \ref{lemmaHC} it follows that
\begin{equation}\label{Lp growth}
\|T\|^{r}_{L_{r}}\leq r^{dr/2}\quad\mbox{for each }r\geq 1.
\end{equation}
Let $\alpha=\frac{d}{4e}$, and we now show that $\tr\left[\exp(\alpha|T|^{2/d})\right]<\infty$. Indeed, by the Taylor expansion and the Stirling formula $k!\sim \frac{k^{k}\sqrt{s\pi k}}{e^{k}}$, we get that
\begin{align*}
\tr\left[\exp(\alpha|T|^{2/d})\right]&=\sum_{k=0}^{\infty}\frac{\alpha^{k}\|T\|^{2k/d}_{L_{2k/d}}}{k!}\\
&\leq\sum_{k=0}^{\infty}\frac{\left(\frac{2\alpha}{d}\right)^{k}\cdot k^{k}}{k!}\\
&\leq K_{1}\sum_{k=0}^{\infty}\left(\frac{2e\alpha}{d}\right)^{k}=2K_{1}<\infty,
\end{align*}
where we used \eqref{Lp growth} and the Stirling formula in the second and the third inequality, respectively. Therefore, by the Chebyshev inequality, we have that for each $t>0$ the following holds
\[
\tr[\mathds{1}_{(t,\infty)}(|T|)]\leq e^{-\alpha t^{2/d}}\cdot\tr\left[\exp(\alpha|T|^{2/d})\right]\leq K\exp\left\{-\frac{d\cdot t^{2/d}}{4e}\right\}.
\]
\end{proof}

Applying the functional calculus for positive element in $\M_{2^{n}}$, we obtain the following integral representation lemma.
\begin{lemma}\label{lemmaIntT}
Let $S$, $T\in \M_{2^{n}}$. If $T$ is positive, then
\[
\tr(ST)=\int_{0}^\infty\tr\left(S\cdot\mathds{1}_{(t,\infty)}(T)\right).
\]
\end{lemma}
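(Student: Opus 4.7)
The plan is to prove this via the familiar \emph{layer-cake representation} applied to the spectral decomposition of $T$, since $\M_{2^{n}}$ is finite-dimensional and every positive element has a finite spectral decomposition.

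First, I would write $T=\sum_{i} \lambda_{i} P_{i}$ where $\{\lambda_{i}\}$ are the (nonnegative) eigenvalues of $T$ and $\{P_{i}\}$ are the corresponding spectral projections. By functional calculus one has $\mathds{1}_{(t,\infty)}(T)=\sum_{i:\lambda_{i}>t} P_{i}$, and consequently the scalar layer-cake identity $\lambda_{i}=\int_{0}^{\infty}\mathds{1}_{(t,\infty)}(\lambda_{i})\,dt$ lifts to the operator identity
\[
T=\int_{0}^{\infty} \mathds{1}_{(t,\infty)}(T)\,dt,
\]
where the integrand is piecewise constant in $t$ with finitely many jumps, so the integral is an honest (finite) Riemann sum of operators.

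Next, I would multiply by $S$ on the left, take the trace, and use linearity plus the fact that we are integrating a bounded, compactly supported (in fact, $(0,\|T\|_{\infty})$-supported), piecewise constant scalar function $t\mapsto \tr(S\cdot \mathds{1}_{(t,\infty)}(T))$. Swapping trace and integral is immediate in this finite-dimensional piecewise constant setting, giving
\[
\tr(ST)=\tr\!\left(S\int_{0}^{\infty}\mathds{1}_{(t,\infty)}(T)\,dt\right)
=\int_{0}^{\infty}\tr\!\left(S\cdot \mathds{1}_{(t,\infty)}(T)\right)dt.
\]
Equivalently, one can verify the identity termwise:
\[
\int_{0}^{\infty}\tr\!\left(S\cdot \mathds{1}_{(t,\infty)}(T)\right)dt
=\sum_{i}\tr(SP_{i})\int_{0}^{\lambda_{i}}dt
=\sum_{i}\lambda_{i}\tr(SP_{i})=\tr(ST).
\]

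There is no real obstacle here; the only thing to be careful about is that $S$ is not assumed positive or self-adjoint, so one cannot appeal to positivity arguments. However, everything reduces to scalar layer-cake applied to the spectrum of $T$ against the fixed coefficients $\tr(SP_{i})\in\mathbb{C}$, and linearity of both the trace and the integral handles this without trouble.
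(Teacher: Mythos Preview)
Your proof is correct and follows essentially the same approach as the paper: both use the layer-cake representation $T=\int_{0}^{\infty}\mathds{1}_{(t,\infty)}(T)\,dt$ coming from functional calculus, then pass the trace inside the integral by linearity. Your version is slightly more detailed in justifying the swap via the finite spectral decomposition, but the argument is the same.
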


\begin{proof}
	By the functional calculus of $T$, it is clear that $T=\int_{0}^{\infty}\mathds{1}_{T}((t,\infty))dt$. We apply the integral representation of $T$ to the $\tr(ST)$ entails that
	\[
	\tr(ST)=\tr\left(S\cdot\int_{0}^{\infty}\mathds{1}_{(t,\infty)}(T)dt\right)=\int^{\infty}_{0}\tr\left(S\cdot\mathds{1}_{(t,\infty)}(T)\right)dt,
	\]
	where the last equality follows from the linearity of $\tr$.
\end{proof}

We also need the following estimate from \cite[Lemma 12]{KK2013}.
\begin{lemma}\label{KK18_5calculation}
	Let $d\geq1$ be a positive integer and $t_0>\left(4e\right)^{\frac{d}{2}}$. Then we have
	\[
	\int_{t_0}^\infty t^2\cdot \exp\left\{-\frac{d\cdot t^{2/d}}{2e}\right\}dt\leq 5et_0^{3-\frac{2}{d}}\exp\left\{-\frac{d\cdot t_0^{2/d}}{2e}\right\}.
	\]
\end{lemma}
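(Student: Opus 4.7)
The plan is to prove the estimate by integration by parts followed by a bootstrap, using an antiderivative-like quantity that produces the desired $t_0^{3-2/d}$ factor. Write $\phi(t) \coloneqq -dt^{2/d}/(2e)$, so that the integrand is $t^2 e^{\phi(t)}$. A direct computation gives $\phi'(t) = -t^{2/d-1}/e$, and for $F(t) \coloneqq t^{3-2/d} e^{\phi(t)}$ the product rule yields
\[
F'(t) = (3-2/d)\, t^{2-2/d} e^{\phi(t)} - \frac{1}{e}\, t^2 e^{\phi(t)}.
\]
Rearranging this identity expresses $t^2 e^{\phi(t)}$ as $e(3-2/d)t^{2-2/d} e^{\phi(t)} - eF'(t)$, a form amenable to integration.

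Next, setting $J \coloneqq \int_{t_0}^\infty t^2 e^{\phi(t)}\,dt$ and integrating the rearranged identity from $t_0$ to $\infty$ (using that $F(t)\to 0$ as $t\to\infty$, since $dt^{2/d}/(2e)\to\infty$ for every $d\geq 1$), I would obtain
\[
J = eF(t_0) + e\left(3 - \tfrac{2}{d}\right) \int_{t_0}^\infty t^{2-2/d} e^{\phi(t)}\,dt.
\]
The remaining integral is then controlled by $J$ itself: since $t^{-2/d}$ is decreasing on $[t_0,\infty)$, one has $t^{2-2/d}\leq t_0^{-2/d} t^2$ pointwise, whence $\int_{t_0}^\infty t^{2-2/d} e^{\phi(t)}\,dt \leq t_0^{-2/d} J$.

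The final step is where the hypothesis $t_0 > (4e)^{d/2}$ is used in a sharp way. It is equivalent to $t_0^{2/d} > 4e$, so together with the trivial bound $3 - 2/d \leq 3$ one gets
\[
e\left(3 - \tfrac{2}{d}\right) t_0^{-2/d} \;\leq\; \frac{3e}{4e} \;=\; \frac{3}{4}.
\]
Combining the last three displays yields $J \leq eF(t_0) + \tfrac{3}{4} J$, i.e., $J \leq 4e F(t_0) \leq 5e\, t_0^{3-2/d} \exp\{-dt_0^{2/d}/(2e)\}$, which is the claim. I do not anticipate any genuine obstacle: the only nontrivial point is the bootstrap coefficient, and the hypothesis $t_0 > (4e)^{d/2}$ is designed precisely to make it strictly less than one (in fact $\leq 3/4$), leaving room for the slack between the constants $4e$ and $5e$.
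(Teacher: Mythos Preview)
Your proof is correct; the integration-by-parts identity for $F(t)=t^{3-2/d}e^{\phi(t)}$ together with the bootstrap (using $t_0^{2/d}>4e$ to make the feedback coefficient at most $3/4$) gives $J\leq 4eF(t_0)$ as claimed. The paper itself does not prove this lemma at all: it simply cites \cite[Lemma~12]{KK2013}, so there is nothing to compare against.
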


We shall prove Proposition \ref{key lemma 1} in details. 
We here explain some notation we use below. For each $\s=(s_1,,\cdots,s_n)\in \{0,1,2,3\}^n$, it is viewed an element in $\{0,1,2,3\}^{n+1}$ via 
$$\widetilde{\s}=(s_1,\cdots,s_n,0).$$ 
To simplify symbols, we still write $\s$ instead of $\widetilde{\s}$.
Hence, for each $\s\in \{0,1,2,3\}^n$, the summation $\s\oplus e_{n+1}^{\alpha}$ is read as follows 
$$\s\oplus e_{n+1}^{\alpha}:=\widetilde{\s} \oplus e_{n+1}^{\alpha}\in \{0,1,2,3\}^{n+1}, \quad \alpha\in \{0,1,2,3\}.$$
For each $\s\in \{0,1,2,3\}^n$, we set
$$\s^{j\curvearrowright}=(s_1,\cdots,s_{j-1},0,s_{j+1},\cdots,s_n,s_j)\in \{0,1,2,3\}^{n+1},$$
i.e., we remove the original $ s_j $ to the $ n+1$-position and replace the original $s_j$ with 0.
By the same spirit, for each $\s\in \{0,1,2,3\}^n$, $\sigma_{\widetilde{\s}}=\sigma_{\s}\otimes \mathbf{1}_2$ is viewed as an element in $\M_{2^{n+1}}$.
For simplicity, we still write $\sigma_{\s}$ instead of $\sigma_{\widetilde{\s}}$. 

In what follows, let $d\geq1$ be fixed, $T\in\M_{2^{n}}$ and $J\subseteq [n]$. For $j\in J$, define 
\begin{equation}\label{eq:Tj}
	T_{j}\coloneqq \sum_{\substack{\supp(\s)\subseteq J^{c}\\|\supp(\s)|=d-1}}  \sum_{\alpha\in\{1,2,3\}}\widehat{T}(\s\oplus e^{\alpha}_{j})\sigma_{\s\oplus e^{\alpha}_{n+1}},
\end{equation}
\begin{equation}\label{T copyj}
	T_{\mathrm{copy},j}\coloneqq \sum_{\substack{\s\in\{0,1,2,3\}^{n}\\s_{j=0}}}\widehat{T}(\s)\sigma_{\s\oplus e^{0}_{n+1}}+\sum_{\substack{\s\in\{0,1,2,3\}^{n}\\s_{j\neq 0}}}\widehat{T}(\s)\sigma_{\s^{j\curvearrowright}},
\end{equation}
and 
\begin{equation}\label{Tilde Tj}
	\widetilde{T}_j\coloneqq \underbrace{\sum_{\substack{\s\in\{0,1,2,3\}^{n}\\s_{j=0}}}\widehat{T}(\s)\sigma_{\s\oplus e^{0}_{n+1}}}_{\widetilde{T}_{j,L}}+\underbrace{\sum_{\substack{\s\in\{0,1,2,3\}^{n}\\s_{j\neq 0}}}\widehat{T}(\s)\sigma_{\s\oplus e^{s_{j}}_{n+1}}}_{\widetilde{T}_{j,R}}.
\end{equation}
Then, $T_j$, $	T_{\mathrm{copy},j}$ and $\widetilde{T}_j$ are elements in $\M_{2^{n+1}}$.

Define $\Psi_j:\M_{2^{n}}\otimes\mathbf{1}_{2}\to\M_{2}^{\otimes j-1}\otimes\mathbf{1}_{2}\otimes\M_{2}^{\otimes n-j}$ by setting: for each $\s=(s_j)_{j=1}^n\in \{0,1,2,3\}^n$, 
\[
\sigma_{\s}\otimes\mathbf{1}_{2}\mapsto \sigma_{\s^{j\curvearrowright}}.
\]
It is clear that $\Psi_j$ is an $*$-isomorphism from $\M_{2^{n}}\otimes\mathbf{1}_{2}$ onto $\M_{2}^{\otimes j-1}\otimes\mathbf{1}_{2}\otimes\M_{2}^{\otimes n-j}$ with
\[
\Psi_j(T\otimes \mathbf{1}_{2})=T_{\mathrm{copy},j}
\]
and
\[
\Psi_j\left(\dc_{j}(T\otimes \mathbf{1}_{2})\right)=\dc_{n+1}(T_{\mathrm{copy},j})
\]
Hence, we have
\begin{equation}\label{key identity8}
	\left\|\dc_{n+1}(T_{\mathrm{copy},j})\right\|_{L_{1}}=\left\|\Psi_j\left(\dc_{j}(T\otimes \mathbf{1}_{2})\right)\right\|_{L_{1}}=\left\|\dc_{j}(T\otimes \mathbf{1}_{2})\right\|_{L_{1}}=\left\|\dc_{j}(T)\right\|_{L_{1}}.
\end{equation}

The next several technical lemmas provide necessary information of $T_j$, $T_{\mathrm{copy},j}$ and $\widetilde{T}_j$, which are key ingredients of proving Proposition \ref{key lemma 1}.

\begin{lemma}\label{lem cJc}
	Let $T\in \M_{2^{n}}$ and $J\subseteq [n]$. For each $j\in J$, we have
	$$	\mathcal{E}_{J^{c}\cup\{n+1\}}\left(A_j\widetilde{T}_j\right)=\mathcal{E}_{J^{c}\cup\{n+1\}}\left(\dc_{n+1}(T_{\mathrm{copy},j})\right),$$
	where $\widetilde{T}_j$ and $T_{\mathrm{copy},j}$ are as in \eqref{Tilde Tj} and \eqref{T copyj}, and $A_j$ is given by 
	\begin{equation}\label{Aj}
		A_j=\sum_{\alpha\in\{1,2,3\}}\sigma_{e^{\alpha}_{j}}.
	\end{equation}
\end{lemma}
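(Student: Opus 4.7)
The plan is to expand both sides of the identity in the Pauli basis, use the Pauli product rule at slot $j$, and verify that the two sides retain the same Fourier modes after $\mathcal{E}_{J^c\cup\{n+1\}}$ is applied.

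For the right-hand side I would first compute $\dc_{n+1}(T_{\mathrm{copy},j})$ via formula \eqref{partial derivative}. The first summand in \eqref{T copyj} carries $\sigma_0$ at slot $n+1$ and is annihilated, while the second survives because $(\s^{j\curvearrowright})_{n+1}=s_j\neq 0$, so
\[
\dc_{n+1}(T_{\mathrm{copy},j})=\sum_{\substack{\s\in\{0,1,2,3\}^n\\ s_j\neq 0}}\widehat{T}(\s)\,\sigma_{\s^{j\curvearrowright}}.
\]
Applying $\mathcal{E}_{J^c\cup\{n+1\}}$ and using Proposition \ref{conditionalexpectation}, the constraint $\supp(\s^{j\curvearrowright})\subseteq J^c\cup\{n+1\}$ reduces, since $(\s^{j\curvearrowright})_j=0$ and $j\in J$, to $\supp(\s)\setminus\{j\}\subseteq J^c$ together with $s_j\neq 0$.

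For the left-hand side I would split $\widetilde{T}_j=\widetilde{T}_{j,L}+\widetilde{T}_{j,R}$ and multiply by $A_j=\sum_{\alpha=1}^{3}\sigma_{e^\alpha_j}$ slot by slot, using the Pauli relation $\sigma_\alpha\sigma_\beta=\delta_{\alpha\beta}\sigma_0+i\sum_\gamma\varepsilon_{\alpha\beta\gamma}\sigma_\gamma$ at position $j$. Every term of $A_j\widetilde{T}_{j,L}$ carries $\sigma_\alpha$ with $\alpha\in\{1,2,3\}$ at slot $j$, which is never $\sigma_0$; since $j\in J$, all such terms are annihilated by $\mathcal{E}_{J^c\cup\{n+1\}}$. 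In $A_j\widetilde{T}_{j,R}$ the $j$-th slot becomes $\sigma_\alpha\sigma_{s_j}$, which is proportional to $\sigma_0$ exactly when $\alpha=s_j$ and otherwise produces some $\sigma_\gamma$ with $\gamma\in\{1,2,3\}$ (again killed by the conditional expectation). Keeping only the diagonal contribution $\alpha=s_j$ and enforcing $s_i=0$ for every $i\in J\setminus\{j\}$, the surviving basis element has $\sigma_0$ at position $j$, the original $\sigma_{s_i}$ at each $i\in J^c$, and $\sigma_{s_j}$ at position $n+1$; this is precisely $\sigma_{\s^{j\curvearrowright}}$, and the constraints on $\s$ match those obtained from the right-hand side. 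Matching Fourier coefficients yields the identity.

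The main obstacle is purely notational: one must carefully track the identifications $\widetilde\s=(s_1,\dots,s_n,0)$ and $\s^{j\curvearrowright}$, and recognize that the Pauli identity $\sigma_{s_j}\sigma_{s_j}=\sigma_0$ at slot $j$, combined with the $\sigma_{s_j}$ already sitting at slot $n+1$ inside $\widetilde{T}_{j,R}$, is exactly the algebraic mechanism encoded by the relocation $\s\mapsto\s^{j\curvearrowright}$. Once this translation is made explicit, all remaining steps are routine computations using Proposition \ref{conditionalexpectation} and the orthogonality of the Pauli basis.
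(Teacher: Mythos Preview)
Your argument is correct and follows essentially the same route as the paper: both proofs split $\widetilde{T}_j=\widetilde{T}_{j,L}+\widetilde{T}_{j,R}$, observe that $A_j\widetilde{T}_{j,L}$ is annihilated by $\mathcal{E}_{J^c\cup\{n+1\}}$ because slot $j$ never carries $\sigma_0$, then use the Pauli product rule at slot $j$ to keep only the diagonal $\alpha=s_j$ contribution in $A_j\widetilde{T}_{j,R}$, and match the result with the direct computation of $\mathcal{E}_{J^c\cup\{n+1\}}(\dc_{n+1}(T_{\mathrm{copy},j}))$. Your write-up is in fact slightly more explicit than the paper's about the Pauli multiplication mechanism, but the logical structure is identical.
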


\begin{proof} 
For fixed $j\in J$, according to the definition of conditional expectation, we note that for each $\widetilde{\s}\in \{0,1,2,3\}^{n+1}$, if $\widetilde{\s}_{i}\neq 0$ for some $i\in J$, then 
	\begin{equation}\label{note 1}
		\mathcal{E}_{J^c\cup \{n+1\}}(\sigma_{\widetilde{\s}})=0.
	\end{equation}
	From this, we immediately deduce that $	\mathcal{E}_{J^{c}\cup\{n+1\}}\left(A_j\widetilde{T}_{j,L}\right)=0$, where $\widetilde{T}_{j,L}$ is as in \eqref{Tilde Tj}. Thus,	
	\begin{equation}
		\label{key identity7}
		\begin{aligned}
				\mathcal{E}_{J^{c}\cup\{n+1\}}\left(A_j\widetilde{T}_j\right)&=\mathcal{E}_{J^{c}\cup\{n+1\}}\left(A_j\widetilde{T}_{j,R}\right)\\
			&=\sum_{\alpha\in \{1,2,3\}}\sum_{\substack{\s\{0,1,2,3\}^n\\s_{j}\neq 0}}\widehat{T}(\s)\mathcal{E}_{J^{c}\cup\{n+1\}}\left(\sigma_{e^{\alpha}_{j}}\cdot \sigma_{\s\oplus e^{s_j}_{n+1}}\right)\\
			&=\sum_{\alpha\in \{1,2,3\}}\sum_{\substack{\supp(\s)\subseteq J^{c}\cup \{j\}\\s_{j}\neq 0}}\widehat{T}(\s)\mathcal{E}_{J^{c}\cup\{n+1\}}\left(\sigma_{e^{\alpha}_{j}}\cdot \sigma_{\s\oplus e^{s_j}_{n+1}}\right)\\
			&=\sum_{\substack{\supp(\s)\subseteq J^{c}\cup\{j\}\\s_{j}\neq 0}}\widehat{T}(\s)\sigma_{\s^{j\curvearrowright}},
		\end{aligned}
	\end{equation}
	where we used \eqref{note 1} twice in last two equality.

	On the other hand side, it follows from \eqref{partial derivative} that
	\begin{equation*}\label{key identity6}
		\dc_{n+1}(T_{\mathrm{copy},j})=\sum_{\substack{\s\in\{0,1,2,3\}^{n}\\s_{j\neq 0}}}\widehat{T}(\s)\sigma_{\s^{j\curvearrowright}}.
	\end{equation*}
	Hence, by \eqref{note 1} again,
	\begin{equation*}
		\mathcal{E}_{J^{c}\cup\{n+1\}}\left(\dc_{n+1}(T_{\mathrm{copy},j})\right)=	\sum_{\substack{\supp(\s)\subseteq J^{c}\cup\{j\}\\s_{j}\neq 0}}\widehat{T}(\s)\sigma_{\s^{j\curvearrowright}}.
	\end{equation*}
	The desired assertion follows from the above argument. 
\end{proof}

\begin{lemma}\label{lem Tj 1}
Let $T\in \M_{2^{n}}$ and $J\subset [n]$. For each $j\in J$, we have
\[
\sum_{\substack{\supp(\s)\subseteq J^{c}\\|\supp(\s)|=d-1,\alpha\in \{1,2,3\}}}|\widehat{T}(\s\oplus e^{\alpha}_{j})|^{2}=\left(\tr\left[\overline{T}_{j}\cdot A_j\widetilde{T}_j\right]\right)^{2},
\]
where $\overline{T}_j=T_j/\|T_j\|_{L_2}$, $T_j$, $\widetilde{T}_j$ and $A_j$ are referred to \eqref{eq:Tj}, \eqref{Tilde Tj} and \eqref{Aj}, respectively.
\end{lemma}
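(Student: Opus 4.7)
My plan is to show that both sides of the claimed identity equal $\|T_j\|_{L_2}^2$, reducing everything to Pauli orthogonality. For the left-hand side, since the elements $\sigma_{\s \oplus e_{n+1}^\alpha}$ appearing in the expansion \eqref{eq:Tj} of $T_j$ are pairwise distinct Pauli basis vectors of $L_2(\M_{2^{n+1}})$, Parseval immediately gives
\[
\|T_j\|_{L_2}^2 = \sum_{\substack{\supp(\s)\subseteq J^c\\|\supp(\s)|=d-1,\,\alpha\in\{1,2,3\}}} |\widehat{T}(\s \oplus e_j^\alpha)|^2,
\]
which is exactly the LHS. Since $\overline{T}_j$ is just the scalar normalization $T_j/\|T_j\|_{L_2}$, the RHS equals $\bigl(\tr[T_j A_j \widetilde{T}_j]\bigr)^2/\|T_j\|_{L_2}^2$, so it remains only to verify that $\tr[T_j\,A_j\,\widetilde{T}_j] = \|T_j\|_{L_2}^2$.

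To this end, I plan to exploit the fact that every Pauli index appearing in the expansion of $T_j$ has support in $J^c \cup \{n+1\}$, so $T_j\in\M_{J^c\cup\{n+1\}}$. Combining the module property and trace-preservation of the conditional expectation $\mathcal{E}_{J^c\cup\{n+1\}}$ with Lemma \ref{lem cJc}, this will yield
\[
\tr[T_j\,A_j\,\widetilde{T}_j] = \tr\bigl[T_j\,\mathcal{E}_{J^c\cup\{n+1\}}(A_j\widetilde{T}_j)\bigr] = \tr\bigl[T_j\,\mathcal{E}_{J^c\cup\{n+1\}}(\dc_{n+1}(T_{\mathrm{copy},j}))\bigr] = \tr\bigl[T_j\,\dc_{n+1}(T_{\mathrm{copy},j})\bigr],
\]
where in the final step I push the conditional expectation back using $T_j\in\M_{J^c\cup\{n+1\}}$ and trace-preservation.

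The remaining step is a direct Pauli-basis computation. Reindexing each $\s$ with $s_j\neq 0$ via $\s = \mathbf{v}\oplus e_j^{s_j}$ (so $v_j = 0$), the definition of $T_{\mathrm{copy},j}$ will give
\[
\dc_{n+1}(T_{\mathrm{copy},j}) = \sum_{\substack{\mathbf{v}:\,v_j=0\\\alpha\in\{1,2,3\}}}\widehat{T}(\mathbf{v}\oplus e_j^\alpha)\,\sigma_{\mathbf{v}\oplus e_{n+1}^\alpha}.
\]
Pairing this against $T_j$ via Pauli orthonormality isolates precisely the terms with $\supp(\mathbf{v})\subseteq J^c$ and $|\supp(\mathbf{v})|=d-1$, and produces $\sum\widehat{T}(\s\oplus e_j^\alpha)^2 = \|T_j\|_{L_2}^2$, where the last equality invokes that Fourier coefficients of a self-adjoint operator are real (applicable here since Lemma \ref{lem Tj 1} is used for the projection case of Proposition \ref{key lemma 1}). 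The principal obstacle will be the careful bookkeeping through the reindexings such as $\s \mapsto \s^{j\curvearrowright}$ and the conditional expectation identity of Lemma \ref{lem cJc}; once the indices are aligned, the remainder is routine Pauli orthogonality.
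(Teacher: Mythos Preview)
Your proposal is correct and reaches the same conclusion as the paper --- both sides equal $\|T_j\|_{L_2}^2$ --- but the route to computing $\tr[T_j\,A_j\widetilde{T}_j]$ differs. The paper splits $\widetilde{T}_j = \widetilde{T}_{j,L} + \widetilde{T}_{j,R}$ and computes the pairing $\langle T_j, A_j\widetilde{T}_j\rangle$ directly: it observes $\langle T_j, A_j\widetilde{T}_{j,L}\rangle = 0$ by orthogonality, and then evaluates $\langle T_j, A_j\widetilde{T}_{j,R}\rangle$ by working out the Pauli products $\sigma_{e_j^\alpha}\cdot\sigma_{\s\oplus e_{n+1}^{s_j}}$ term by term, obtaining $\sum|\widehat{T}(\s'\oplus e_j^{\alpha'})|^2$ without passing through any conditional expectation. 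Your approach instead uses $T_j\in\M_{J^c\cup\{n+1\}}$ and the module property to insert $\mathcal{E}_{J^c\cup\{n+1\}}$, invokes Lemma~\ref{lem cJc} to replace $A_j\widetilde{T}_j$ by $\dc_{n+1}(T_{\mathrm{copy},j})$, and then pairs against $T_j$. Your detour reuses Lemma~\ref{lem cJc} (which the paper has already established) and thereby avoids repeating the Pauli-product bookkeeping; the paper's computation is more self-contained but duplicates work that is implicit in Lemma~\ref{lem cJc}. Both arguments are equally valid, and your explicit acknowledgment that realness of the Fourier coefficients (via self-adjointness of $T$) is needed to identify $\sum\widehat{T}(\s\oplus e_j^\alpha)^2$ with $\|T_j\|_{L_2}^2$ is a point the paper leaves implicit in its inner-product notation.
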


\begin{proof}
	Take $j\in J$. From the orthogonality of $\{\sigma_{\s}\}_{\s\in \{0,1,2,3\}^{n+1}}$ in $L_2(\M_{2^{n+1}})$, it is not hard to see that for each $\s\in\{0,1,2,3\}^n$ and $\alpha\in \{1,2,3\}$, 
	\begin{equation*}
		\left\langle \sigma_{\s\oplus e^{\alpha}_{n+1}},  A_j \widetilde{T}_{j,L}\right\rangle =0,
	\end{equation*}	
	where $\widetilde{T}_{j,L}$ is as in \eqref{Tilde Tj}.
	It follows that 
	\begin{equation*}
		\begin{aligned}
			&\left\langle T_j,  A_j \widetilde{T}_{j}\right\rangle 
=	\left\langle T_j,  A_j (\widetilde{T}_{j,L}+\widetilde{T}_{j,R})\right\rangle =	\left\langle T_j, A_j \widetilde{T}_{j,R}\right\rangle\\
=&\sum_{\substack{\supp(\s^{\prime})\subseteq J^{c}\\|\supp(\s^{\prime})|=d-1}} \sum_{\substack{\s\in\{0,1,2,3\}^{n}\\s_{j}\neq 0}} \sum_{\alpha^{\prime},\alpha\in\{1,2,3\}} \langle\widehat{T}(\s^{\prime}\oplus e^{\alpha^{\prime}}_{j})\sigma_{\s^{\prime}\oplus e^{\alpha^{\prime}}_{n+1}}, \widehat{T}(\s)\sigma_{e^{\alpha}_{j}}\cdot \sigma_{\s\oplus e^{s_{j}}_{n+1}}\rangle\\
=&\sum_{\substack{\supp(\s^{\prime})\subseteq J^{c}\\|\supp(\s^{\prime})|=d-1}}\sum_{\alpha'\in\{1,2,3\}}|\widehat{T}(\s^{\prime}\oplus e^{\alpha^{\prime}}_{j})|^{2},
		\end{aligned}
	\end{equation*}
	where in the second equality we used the orthogonality of $\{\sigma_{\s}\}_{\s\in \{0,1,2,3\}^{n+1}}$ in $L_2(\M_{2^{n+1}})$.
	To verify the desired assertion, it suffices to note that 
	\begin{equation*}\label{key identity1}
		\|T_{j}\|^{2}_{L_{2}}=\sum_{\substack{\supp(\s^{\prime})\subseteq J^{c}\\|\supp(\s^{\prime})|=d-1}}\sum_{\alpha'\in\{1,2,3\}}|\widehat{T}(\s^{\prime}\oplus e^{\alpha^{\prime}}_{j})|^{2}.
	\end{equation*}
\end{proof}

\begin{lemma}\label{Y1j}
	Let $T\in \M_{2^{n}}$ and $J\subseteq [n]$. For each $j\in J$ and $t_0>0$, we have
	\begin{align*}
		\int_{0}^{t_0}&\tr\left[\mathds{1}_{(t,\infty)}(\left|\overline{T}_{j}\right|)\cdot\left|\mathcal{E}_{J^{c}\cup\{n+1\}}\left(A_j\widetilde{T}_j\right)\right|\right]~dt\leq t_{0}\left\|\dc_{j}(T)\right\|_{L_{1}}
	\end{align*}
	where $\overline{T}_j=T_j/\|T_j\|_2$, $T_j$, $\widetilde{T}_j$ and $A_j$ are referred to \eqref{eq:Tj}, \eqref{Tilde Tj} and \eqref{Aj}, respectively.
\end{lemma}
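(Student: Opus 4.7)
The plan is to control the integrand pointwise in $t$ by a single $t$-independent quantity equal to $\|\dc_j(T)\|_{L_1}$, after which the final bound follows by multiplying by the length $t_0$ of the interval of integration. This reduction isolates two elementary facts: spectral projections are bounded above by $\mathbf{1}$, and the conditional expectation $\mathcal{E}_{J^{c}\cup\{n+1\}}$ is $L_1$-contractive in the tracial setting.

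First, fix $t>0$ and set $P_t:=\mathds{1}_{(t,\infty)}(|\overline{T}_j|)$ and $Y:=\mathcal{E}_{J^{c}\cup\{n+1\}}(A_j\widetilde{T}_j)$. Since $P_t$ is a spectral projection, $0\le P_t\le\mathbf{1}$, and by the cyclicity of the trace
\[
\tr\!\left[P_t\cdot |Y|\right]=\tr\!\left[|Y|^{1/2}P_t|Y|^{1/2}\right]\le \tr\!\left[|Y|^{1/2}|Y|^{1/2}\right]=\|Y\|_{L_1},
\]
where the inequality uses $|Y|^{1/2}P_t|Y|^{1/2}\le |Y|^{1/2}\mathbf{1}|Y|^{1/2}$ in the operator ordering.

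Next I would invoke Lemma \ref{lem cJc} to replace $Y$ by $\mathcal{E}_{J^{c}\cup\{n+1\}}\bigl(\dc_{n+1}(T_{\mathrm{copy},j})\bigr)$, and then apply the $L_1$-contractivity of the conditional expectation to obtain $\|Y\|_{L_1}\le \|\dc_{n+1}(T_{\mathrm{copy},j})\|_{L_1}$. Combined with the identity \eqref{key identity8}, namely $\|\dc_{n+1}(T_{\mathrm{copy},j})\|_{L_1}=\|\dc_j(T)\|_{L_1}$, this yields the uniform pointwise bound
\[
\tr\!\left[\mathds{1}_{(t,\infty)}(|\overline{T}_j|)\cdot |Y|\right]\le \|\dc_j(T)\|_{L_1},\qquad t>0.
\]
Integrating this constant in $t$ over $[0,t_0]$ gives exactly $t_0\|\dc_j(T)\|_{L_1}$, as required.

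There is essentially no obstacle here: the entire proof is a short composition of two standard contractions, and the combinatorial content has already been absorbed into Lemma \ref{lem cJc} and the identity \eqref{key identity8}. The only point requiring a line of care is the verification that $\tr[P_t\cdot|Y|]\le \|Y\|_{L_1}$ when $P_t$ and $|Y|$ do not commute, which is handled by the symmetric rewriting $\tr[|Y|^{1/2}P_t|Y|^{1/2}]$.
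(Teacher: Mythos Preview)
Your proposal is correct and follows essentially the same route as the paper: bound the integrand uniformly by $\|Y\|_{L_1}$ via $0\le P_t\le\mathbf{1}$, then apply Lemma~\ref{lem cJc}, the $L_1$-contractivity of the conditional expectation, and the identity~\eqref{key identity8}. Your symmetric rewriting $\tr[|Y|^{1/2}P_t|Y|^{1/2}]$ is a slightly more explicit justification of the first step than the paper gives, but the argument is otherwise identical.
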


\begin{proof}
It follows from Lemma \ref{lem cJc} that
\begin{equation*}\label{main estimate 2}
\begin{split}
&\int_{0}^{t_{0}}\tr\left[\mathds{1}_{(t,\infty)}(\left|\overline{T}_{j}\right|)\cdot\left|\mathcal{E}_{J^{c}\cup\{n+1\}}\left(A_j\widetilde{T}_j\right)\right|\right]~dt\\
\leq&t_{0}\left\|\mathcal{E}_{J^{c}\cup\{n+1\}}(\dc_{n+1}(T_{\mathrm{copy},j}))\right\|_{L_{1}}\leq t_{0}\left\|\dc_{n+1}(T_{\mathrm{copy},j})\right\|_{L_{1}},
\end{split}
\end{equation*}
where the second inequality is due to the fact that conditional expectation is bounded on $L_p$, $1\leq p\leq \infty$. The desired inequality follows from  \eqref{key identity8}.
\end{proof}

\begin{lemma}\label{Y2j}
Let $T\in \M_{2^{n}}$ and $J\subseteq [n]$. For each $j\in J$ and $t_0>(2e)^{\frac{d}{2}}$, we have
\begin{align*}
&\int_{t_0}^{\infty}\tr\left[\mathds{1}_{(t,\infty)}(\left|\overline{T}_{j}\right|)\cdot\left|\mathcal{E}_{J^{c}\cup\{n+1\}}\left(A_j\widetilde{T}_j\right)\right|\right]~dt\\
\leq&\sqrt{5e}t_0^{1-\frac{1}{d}}\exp\left\{-\frac{d\cdot t_0^{2/d}}{4e}\right\}\left(\sum_{\substack{\supp(\s)\subseteq J^{c}\cup\{j\}\\s_{j}\neq 0}}|\widehat{T}(\s)|^{2}\right)^{1/2},
\end{align*}
where $\overline{T}_j=T_j/\|T_j\|_2$, $T_j$, $\widetilde{T}_j$ and $A_j$ are referred to \eqref{eq:Tj}, \eqref{Tilde Tj} and \eqref{Aj}, respectively.
\end{lemma}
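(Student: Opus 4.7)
The plan is to reduce the trace integrand to a norm estimate via Cauchy--Schwarz in the trace, identify the $L_{2}$-norm of the conditional expectation as the sum appearing on the right-hand side, and then control the remaining $t$-integral by a Cauchy--Schwarz trick that converts it to the form handled by Lemma~\ref{KK18_5calculation}. Set $P_t := \mathds{1}_{(t,\infty)}(|\overline{T}_j|)$ throughout; note that $T_j$, and hence $\overline{T}_j$, is supported on Pauli monomials $\sigma_{\s\oplus e^{\alpha}_{n+1}}$ with $|\supp(\s)|=d-1$, so $\overline{T}_j$ has Fourier degree at most $d$ and $\|\overline{T}_j\|_{L_2}=1$.

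First, the tracial Cauchy--Schwarz inequality applied to the projection $P_t$ and the positive operator $|\mathcal{E}_{J^c\cup\{n+1\}}(A_j\widetilde{T}_j)|$ yields
\[
\tr\bigl[P_t\cdot\bigl|\mathcal{E}_{J^c\cup\{n+1\}}(A_j\widetilde{T}_j)\bigr|\bigr]\le \tr(P_t)^{1/2}\,\bigl\|\mathcal{E}_{J^c\cup\{n+1\}}(A_j\widetilde{T}_j)\bigr\|_{L_2}.
\]
Moreover, the explicit Fourier formula for $\mathcal{E}_{J^c\cup\{n+1\}}(A_j\widetilde{T}_j)$ derived in the proof of Lemma~\ref{lem cJc} combined with Parseval gives
\[
\bigl\|\mathcal{E}_{J^c\cup\{n+1\}}(A_j\widetilde{T}_j)\bigr\|_{L_2}^{2}=\sum_{\substack{\supp(\s)\subseteq J^c\cup\{j\}\\ s_j\neq 0}}|\widehat{T}(\s)|^{2},
\]
so this factor already matches the second factor in the target inequality and can be pulled out of the $t$-integral.

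The key step is then to bound $\int_{t_0}^{\infty}\tr(P_t)^{1/2}\,dt$. My plan is to split off the factor $t$ and apply Cauchy--Schwarz in the variable $t$:
\[
\int_{t_0}^{\infty}\tr(P_t)^{1/2}\,dt=\int_{t_0}^{\infty}\tfrac{1}{t}\cdot\bigl(t\cdot\tr(P_t)^{1/2}\bigr)\,dt\le \Bigl(\int_{t_0}^{\infty}\tfrac{dt}{t^{2}}\Bigr)^{1/2}\Bigl(\int_{t_0}^{\infty}t^{2}\tr(P_t)\,dt\Bigr)^{1/2}=t_0^{-1/2}\Bigl(\int_{t_0}^{\infty}t^{2}\tr(P_t)\,dt\Bigr)^{1/2}.
\]
This is precisely the integral treated by Lemma~\ref{KK18_5calculation}. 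Feeding the sub-Gaussian tail bound for the low-degree element $\overline{T}_j$ (Lemma~\ref{lammaEstimate1_T}, in the sharpened form $\tr(P_t)\le \exp(-d\,t^{2/d}/(2e))$ obtained by optimizing Markov's inequality with exponent $r\approx t^{2/d}/e$ and using the hypercontractive moment comparison of Lemma~\ref{lemmaHC}) into Lemma~\ref{KK18_5calculation} then produces
\[
\int_{t_0}^{\infty}t^{2}\tr(P_t)\,dt\le 5e\,t_0^{3-2/d}\exp\!\Bigl(-\tfrac{d\,t_0^{2/d}}{2e}\Bigr).
\]
Plugging back, the exponent of $t_0$ collects as $(3-2/d)/2-1/2=1-1/d$, and the square root of $\exp(-d t_0^{2/d}/(2e))$ becomes $\exp(-d t_0^{2/d}/(4e))$, yielding exactly the claimed bound.

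The main obstacle is calibrating the tail estimate so that the final exponent comes out as $1/(4e)$ and not $1/(8e)$: if one simply invokes Lemma~\ref{lammaEstimate1_T} in its stated form and halves the exponent via $\tr(P_t)^{1/2}\le\sqrt{K}\exp(-d t^{2/d}/(8e))$, one is forced to control $\int t^2 e^{-dt^{2/d}/(4e)}\,dt$ directly, which does not fit the calibration of Lemma~\ref{KK18_5calculation}. The remedy, which is the whole point of the Cauchy--Schwarz-in-$t$ manoeuvre, is to keep $\tr(P_t)$ itself inside the integral (where its sharp exponent $1/(2e)$ is available and Lemma~\ref{KK18_5calculation} applies verbatim), and only take the square root at the very end; the Cauchy--Schwarz split of $\tr(P_t)^{1/2}$ as $t^{-1}\cdot(t\,\tr(P_t)^{1/2})$ is arranged precisely so that this works without loss of the desired decay rate.
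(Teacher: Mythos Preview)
Your proof is correct and follows essentially the same route as the paper: both apply Cauchy--Schwarz twice, once in the trace (to separate $\tr(P_t)^{1/2}$ from $\|\mathcal{E}_{J^c\cup\{n+1\}}(A_j\widetilde{T}_j)\|_{L_2}$) and once in the $t$-variable (splitting off $1/t$ against $t$), arriving at the identical bound $t_0^{-1/2}\bigl(\int_{t_0}^\infty t^2\tr(P_t)\,dt\bigr)^{1/2}\|\mathcal{E}_{J^c\cup\{n+1\}}(A_j\widetilde{T}_j)\|_{L_2}$; you merely swap the order of these two applications, which is immaterial. Your explicit derivation of the sharp tail exponent $1/(2e)$ via Markov with $r=t^{2/d}/e$ is in fact more careful than the paper, which invokes Lemma~\ref{lammaEstimate1_T} (stated only with exponent $1/(4e)$) together with Lemma~\ref{KK18_5calculation} (which requires $1/(2e)$) without comment.
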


\begin{proof}
Applying the Cauchy-Schwarz inequality twice, we get
	\begin{equation*}\label{main estimate 4}
		\begin{split}
			&\int_{t_{0}}^{\infty}\tr\left[\mathds{1}_{(t,\infty)}(\left|\overline{T}_{j}\right|)\cdot\left|\mathcal{E}_{J^{c}\cup\{n+1\}}\left(A_j\widetilde{T}_j\right)\right|\right]dt\\
			\leq&\left(\int_{t_{0}}^{\infty}\frac{1}{t^{2}}~dt\right)^{1/2}\cdot \left(\int_{t_{0}}^{\infty}t^{2}\cdot \left(\tr\left[\mathds{1}_{(t,\infty)}(\left|\overline{T}_{j}\right|)\cdot\left|\mathcal{E}_{J^{c}\cup\{n+1\}}\left(A_j\widetilde{T}_j\right)\right|\right]\right)^{2}dt\right)^{1/2}\\
			\leq&\frac{1}{\sqrt{t_{0}}}\left(\int_{t_{0}}^{\infty}t^{2}\tr\left[\mathds{1}_{(t,\infty)}(\overline{T}_{j})\right]dt\right)^{1/2}\cdot\left\|\mathcal{E}_{J^{c}\cup\{n+1\}}\left(A_j\widetilde{T}_j\right)\right\|_{L_{2}}.
		\end{split}
	\end{equation*}
Combining Lemma \ref{lammaEstimate1_T} and Lemma \ref{KK18_5calculation}, we have
	\begin{equation*}
	\int_{t_{0}}^{\infty}t^{2}\tr\left[\mathds{1}_{(t,\infty)}(\overline{T}_{j})\right]dt\leq 5et_0^{3-\frac{2}{d}}\exp\left\{-\frac{d\cdot t_0^{2/d}}{2e}\right\}.
	\end{equation*}
 Furthermore, according to \eqref{key identity7}, we have
\[
\left\|\mathcal{E}_{J^{c}\cup\{n+1\}}\left(A_j\widetilde{T}_j\right)\right\|_{L_{2}}^2=\sum_{\substack{\supp(\s)\subseteq J^{c}\cup\{j\}\\s_{j}\neq 0}}|\widehat{T}(\s)|^{2},\quad\mbox{for each }j\in J.
\]
The desired assertion follows.
\end{proof}
We now are ready to provide the proof of Proposition \ref{key lemma 1}.
\begin{proof}[Proof of Proposition \ref{key lemma 1}]
For a given projection $T\in\M_{2^{n}}$, it is clear that $\|T\|_2\leq 1$. We assume without loss of generality that $T=\sum_{\s\in\{0,1,2,3\}^{n}}\widehat{T}(\s)\sigma_{\s}$.
For each $j\in J$, by Lemma \ref{lem Tj 1}, we have
\begin{align*}
	\sum_{\substack{\supp(\s)\subseteq J^{c}\\|\supp(\s)|=d-1,\alpha\in \{1,2,3\}}}|\widehat{T}(\s\oplus e^{\alpha}_{j})|^{2}&=\left(\tr\left[\overline{T}_{j}\cdot A_j\widetilde{T}_j\right]\right)^{2}\\
	&=\left(\tr\left[\mathcal{E}_{J^c\cup\{n+1\}}(\overline{T}_{j}\cdot A_j\widetilde{T}_j)\right]\right)^{2}\\
	&=\left(\tr\left[\overline{T}_{j}\mathcal{E}_{J^c\cup\{n+1\}}( A_j\widetilde{T}_j)\right]\right)^{2},
\end{align*}
where we used that each conditional expectation preserves trace and $\mathcal{E}_{J^c\cup\{n+1\}}(T_{j})=T_j$ (this follows from the definition of $T_j$ as in \eqref{eq:Tj}). Using Lemma \ref{lemmaIntT} we have
\begin{align*}
\left(\tr\left[\overline{T}_{j}\mathcal{E}_{J^c\cup\{n+1\}}( A_j\widetilde{T}_j)\right]\right)^{2}
			&\leq 	\left(\tr\left[|\overline{T}_{j}|\cdot|\mathcal{E}_{J^c\cup\{n+1\}}( A_j\widetilde{T}_j)|\right]\right)^{2}\\
			&=\left\{\int_{0}^{\infty}\tr\left[\mathds{1}_{(t,\infty)}(\left|\overline{T}_{j}\right|)\cdot|\mathcal{E}_{J^c\cup\{n+1\}}( A_j\widetilde{T}_j)|\right]~dt\right\}^{2}\\
			&\leq 2\left\{\int_{0}^{t_{0}}\tr\left[\mathds{1}_{(t,\infty)}(\left|\overline{T}_{j}\right|)\cdot|\mathcal{E}_{J^c\cup\{n+1\}}( A_j\widetilde{T}_j)|\right]~dt\right\}^{2}\\
			&\quad+2\left\{\int_{t_{0}}^{\infty}\tr\left[\mathds{1}_{(t,\infty)}(\left|\overline{T}_{j}\right|)\cdot|\mathcal{E}_{J^c\cup\{n+1\}}( A_j\widetilde{T}_j)|\right]~dt\right\}^{2}\\
			&\coloneqq 2\mathsf{Y}_{1,j}(T)^2+2\mathsf{Y}_{2,j}(T)^2,
\end{align*}
where $t_0>0$ is chosen to satisfy
\begin{equation}\label{specific choice}
\exp\left\{-\frac{d\cdot t_{0}^{2/d}}{2e}\right\}=\sum_{j\in J}\|\dc_{j}(T)\|^{2}_{L_{1}}=M_J(T).
\end{equation}
Note here that \eqref{specific choice} and the assumption $M(T)\leq e^{-2d}$ imply that
\[
t^{2}_{0}=\left(\frac{2e}{d}\right)^{d}\left(\log\left(\frac{1}{M_J(T)}\right)\right)^{d}\quad\mbox{and}\quad t_{0}\geq (4e)^{\frac{d}{2}}.
\]
We conclude from the above argument that 
\begin{equation}\label{lhs key lem}
	\sum_{j\in J}	\sum_{\substack{\supp(\s)\subseteq J^{c}\\|\supp(\s)|=d-1,\alpha\in \{1,2,3\}}}|\widehat{T}(\s\oplus e^{\alpha}_{j})|^{2}\leq 2\sum_{j\in J}\mathsf{Y}_{1,j}(T)^2+2\sum_{j\in J}\mathsf{Y}_{2,j}(T)^2.
\end{equation}
	
By Lemma \ref{Y1j}, we have 
\begin{align*}
\sum_{j\in J}\mathsf{Y}_{1,j}(T)^2\leq t_{0}^2\sum_{j\in J}\left\|\dc_{j}(T)\right\|_{L_{1}}^2=\left(\frac{2e}{d}\right)^{d}M_J(T)\left(\log\left(\frac{1}{M_J(T)}\right)\right)^{d}.
\end{align*}
Using Lemma \ref{Y2j}, we have 
\begin{align*}
\sum_{j\in J}\mathsf{Y}_{2,j}(T)^2&\leq 5et_0^{2-\frac{2}{d}}\exp\left\{-\frac{d\cdot t_0^{2/d}}{2e}\right\} \sum_{j\in J}\sum_{\substack{\supp(\s)\subseteq J^{c}\cup\{j\}\\s_{j}\neq 0}}|\widehat{T}(\s)|^{2}\\
&=5et_0^{-\frac{2}{d}}\left(\frac{2e}{d}\right)^{d}M_J(T)\left(\log\left(\frac{1}{M_J(T)}\right)\right)^{d} \sum_{j\in J}\sum_{\substack{\supp(\s)\subseteq J^{c}\cup\{j\}\\s_{j}\neq 0}}|\widehat{T}(\s)|^{2}\\
&\leq 2\left(\frac{2e}{d}\right)^{d}M_J(T)\left(\log\left(\frac{1}{M_J(T)}\right)\right)^{d}.
\end{align*}
Here we also used the fact (note that $T$ is a projection, and hence $\|T\|_{L_2}\leq 1$)
\[
\sum_{j\in J}\sum_{\substack{\supp(\s)\subseteq J^{c}\cup\{j\}\\s_{j}\neq 0}}|\widehat{T}(\s)|^{2}\leq \|T\|_{L_2}^2\leq 1.
\]

Substituting the estimates of $\sum_{j\in J} \mathsf{Y}_{1,j}^2$ and $\sum_{j\in J} \mathsf{Y}_{2,j}^2$ to \eqref{lhs key lem}, we get 
\begin{equation*}
\sum_{j\in J}	\sum_{\substack{\supp(\s)\subseteq J^{c}\\|\supp(\s)|=d-1,\alpha\in \{1,2,3\}}}|\widehat{T}(\s\oplus e^{\alpha}_{j})|^{2}\leq 6\left(\frac{2e}{d}\right)^{d}M_J(T)\left(\log\left(\frac{1}{M_J(T)}\right)\right)^{d}.
\end{equation*}
This completes the proof of the proposition.	
\end{proof}

\section*{Acknowledgements}
Yong Jiao is partially supported by the National Key R$\&$D Program of China (No. 2023YFA1010800) and the NSFC (Nos. 12125109, W2411005). Sijie Luo is partially supported by the NSFC (No. 12201646) and the Natural Science Foundation Hunan (No. 2023JJ40696). Dejian Zhou is partially supported by the NSFC (No. 12471134), the Natural Science Foundation Hunan (No. 2023JJ20058), and the CSU Innovation-Driven Research Programme (No. 2023CXQD016).


\begin{thebibliography}{10}

\bibitem{BIM2023}
D.~Beltran, P.~Ivanisvili, and J.~Madrid, \emph{On sharp isoperimetric
  inequalities on the hypercube}, arXiv preprint arXiv:2303.06738 (2023).

\bibitem{BGX2024}
D.P Blecher, L.~Gao, and B.~Xu, \emph{Geometric influences on quantum boolean
  cubes}, arXiv preprint arXiv:2409.00224 (2024).

\bibitem{BKKKL1992}
J.~Bourgain, J.~Kahn, G.~Kalai, Y.~Katznelson, and N.~Linial, \emph{The
  influence of variables in product spaces}, Israel J. Math. \textbf{77}
  (1992), no.~1-2, 55--64. \MR{1194785}

\bibitem{CL1993}
A.~Carlen and H.~Lieb, \emph{Optimal hypercontractivity for {F}ermi fields and
  related noncommutative integration inequalities}, Comm. Math. Phys.
  \textbf{155} (1993), no.~1, 27--46. \MR{1228524}

\bibitem{CL2012}
D.~Cordero-Erausquin and M.~Ledoux, \emph{Hypercontractive measures,
  {T}alagrand's inequality, and influences}, Geometric aspects of functional
  analysis, Lecture Notes in Math., vol. 2050, Springer, Heidelberg, 2012,
  pp.~169--189.

\bibitem{EG2020}
R.~Eldan and R.~Gross, \emph{Concentration on the {B}oolean hypercube via
  pathwise stochastic analysis}, S{TOC} '20---{P}roceedings of the 52nd
  {A}nnual {ACM} {SIGACT} {S}ymposium on {T}heory of {C}omputing, ACM, New
  York, 2020, pp.~208--221.

\bibitem{EG2022}
R.~Eldan and R.~Gross, \emph{Concentration on the {B}oolean hypercube via pathwise stochastic
  analysis}, Invent. Math. \textbf{230} (2022), no.~3, 935--994. \MR{4506770}

\bibitem{EKLM2022}
R.~Eldan, G.~Kindler, N.~Lifshitz, and D.~Minzer, \emph{Isoperimetric
  inequalities made simpler}, arXiv preprint arXiv:2204.06686 (2022).

\bibitem{Gr1975}
L.~Gross, \emph{Logarithmic {S}obolev inequalities}, Amer. J. Math. \textbf{97}
  (1975), no.~4, 1061--1083.

\bibitem{IZ2024}
P.~Ivanisvili and H.~Zhang, \emph{On the {E}ldan-{G}ross inequality}, arXiv
  preprint arXiv:2407.17864 (2024).

\bibitem{JLZ2025}
Y.~Jiao, S.~Luo, and D.~Zhou, \emph{Functional {$L_1$}-{$L_p$} inequalities in
  the {CAR} algebra}, J. Funct. Anal. \textbf{288} (2025), no.~2, Paper No.
  110700. \MR{4812337}

\bibitem{KKL1988}
J.~Kahn, G.~Kalai, and N.~Linial, \emph{The influence of variables on {B}oolean
  functions}, 29th {A}nnual {S}ymposium on {F}oundations of {C}omputer
  {S}cience, IEEE Comput. Soc. Press, Washington, DC, [1988] \copyright 1988,
  pp.~68--80. \MR{4732718}

\bibitem{KK2013}
N.~Keller and G.~Kindler, \emph{Quantitative relation between noise sensitivity
  and influences}, Combinatorica \textbf{33} (2013), no.~1, 45--71.
  \MR{3070086}

\bibitem{KKKMS2021}
E.~Kelman, S.~Khot, G.~Kindler, D.~Minzer, and M.~Safra, \emph{Theorems of
  {KKL}, {F}riedgut, and {T}alagrand via random restrictions and
  {L}og-{S}obolev inequality}, 12th {I}nnovations in {T}heoretical {C}omputer
  {S}cience {C}onference, vol. 185, Schloss Dagstuhl. Leibniz-Zent. Inform.,
  Wadern, 2021, pp.~Art. No. 26, 17.

\bibitem{KKLMS2020}
E.~Kelman, G.~Kindler, N.~Lifshitz, D.~Minzer, and M.~Safra, \emph{Towards a
  proof of the {F}ourier-entropy conjecture?}, Geom. Funct. Anal. \textbf{30}
  (2020), no.~4, 1097--1138. \MR{4153910}

\bibitem{KKOD2018}
G.~Kindler, N.~Kirshner, and R.~O'Donnell, \emph{Gaussian noise sensitivity and
  {F}ourier tails}, Israel J. Math. \textbf{225} (2018), no.~1, 71--109.
  \MR{3805643}

\bibitem{Ki2014}
C.~King, \emph{Hypercontractivity for semigroups of unital qubit channels},
  Comm. Math. Phys. \textbf{328} (2014), no.~1, 285--301. \MR{3196986}

\bibitem{MO2010}
A.~Montanaro and T.~J. Osborne, \emph{Quantum {B}oolean functions}, Chic. J.
  Theoret. Comput. Sci. (2010).

\bibitem{ODo2014}
R.~O'Donnell, \emph{Analysis of {B}oolean functions}, Cambridge University
  Press, New York, 2014.

\bibitem{OW2013}
R.~O'Donnell and K.~Wimmer, \emph{K{KL}, {K}ruskal-{K}atona, and monotone
  nets}, SIAM J. Comput. \textbf{42} (2013), no.~6, 2375--2399.

\bibitem{Ros2020}
G.~Rosenthal, \emph{Ramon van {H}andel's remarks on the {D}iscrete cube},
  (2020).

\bibitem{RWZ2024}
C.~Rouz\'{e}, M.~Wirth, and H.~Zhang, \emph{Quantum {T}alagrand, {KKL} and
  {F}riedgut's theorems and the learnability of quantum {B}oolean functions},
  Comm. Math. Phys. \textbf{405} (2024), no.~4, Paper No. 95, 47. \MR{4729863}

\bibitem{Ta1993}
M.~Talagrand, \emph{Isoperimetry, logarithmic {S}obolev inequalities on the
  discrete cube, and {M}argulis' graph connectivity theorem}, Geom. Funct.
  Anal. \textbf{3} (1993), no.~3, 295--314.

\bibitem{Ta1994}
M.~Talagrand, \emph{On {R}usso's approximate zero-one law}, Ann. Probab. \textbf{22}
  (1994), no.~3, 1576--1587.

\bibitem{Ta1997}
M.~Talagrand, \emph{On boundaries and influences}, Combinatorica \textbf{17} (1997),
  no.~2, 275--285.

\end{thebibliography}
\end{document}